\documentclass{article}
\usepackage{amssymb,amsmath,amscd}
\usepackage[amsmath,thmmarks]{ntheorem}
\theoremseparator{.}
\theoremsymbol{\rule{1ex}{1ex}}
\theorembodyfont{\upshape}
\newtheorem{definition}{Definition}[section]
\newtheorem{remark}[definition]{Remark}

\newtheorem{example}[definition]{Example}

\newtheorem{question}[definition]{Question}

\newtheorem*{proof}{Proof}

\newtheorem{setup}[definition]{}
\theorembodyfont{\itshape}

\newtheorem{Theorem}[definition]{Theorem}

\theoremnumbering{Alph}
\newtheorem{inttheorem}{Theorem}

\newtheorem*{warning}{Warning}
\theoremsymbol{}
\newtheorem{lemma}[definition]{Lemma}
\newtheorem{proposition}[definition]{Proposition}
\newtheorem{theorem}[definition]{Theorem}
\newtheorem{corollary}[definition]{Corollary}
\usepackage[a4paper,hscale=0.8,vscale=0.8,hmarginratio=1:1,includehead,headheight=14pt]{geometry}
\usepackage{fancyhdr}
\pagestyle{fancy}

\fancyhead{}
\fancyfoot{}
\fancyhead[L]{\large\nouppercase \rightmark}
\fancyhead[R]{\large\thepage}
\usepackage{amssymb,amsmath,amscd,dsfont}
\usepackage[all,cmtip,line]{xy}
\usepackage{slashed}
\usepackage{colonequals}

\newenvironment{tabsection}{}{}

\newcommand{\op}[1]{\ensuremath{\operatorname{#1}}}
\newcommand{\wt}[1]{\ensuremath{\widetilde{#1}}}
\newcommand{\wh}[1]{\ensuremath{\widehat{#1}}}

\newcommand{\cA}{\ensuremath{\mathcal{A}}}

\newcommand{\cG}{\ensuremath{\mathcal{G}}}
\newcommand{\cH}{\ensuremath{\mathcal{H}}}

\newcommand{\cK}{\ensuremath{\mathcal{K}}}

\newcommand{\cN}{\ensuremath{\mathcal{N}}}

\newcommand{\cP}{\ensuremath{\mathcal{P}}}

\newcommand{\fh}{\ensuremath{\mathfrak{h}}}

\newcommand{\mf}[1]{\ensuremath{\mathfrak{#1}}}

 \newcommand{\R}{\ensuremath{\mathbb{R}}}
 
 \newcommand{\N}{\ensuremath{\mathbb{N}}}

\newcommand{\id}{\ensuremath{\operatorname{id}}}
\newcommand{\pr}{\ensuremath{\operatorname{pr}}}
\newcommand{\ev}{\ensuremath{\operatorname{ev}}}

\newcommand{\Ad}{\ensuremath{\operatorname{Ad}}}

\newcommand{\supp}{\ensuremath{\operatorname{supp}}}

\newcommand{\cat}[1]{\ensuremath{\mathsf{\mathop{#1}}}}

\newcommand{\Aut}{\ensuremath{\operatorname{Aut}}}

\newcommand{\Diff}{\ensuremath{\operatorname{Diff}}}

\newcommand{\Gau}{\ensuremath{\operatorname{Gau}}}

\newcommand{\im}{\ensuremath{\operatorname{im}}}

\newcommand{\res}{\ensuremath{\operatorname{res}}}

\newcommand{\se}{\ensuremath{\nobreak\subseteq\nobreak}}
\newcommand{\from}{\ensuremath{\nobreak\colon\nobreak}}
\renewcommand{\to}{\ensuremath{\nobreak\rightarrow\nobreak}}
\newcommand{\toto}{\ensuremath{\nobreak\rightrightarrows\nobreak}}

\newcommand{\coloneq}{\colonequals}
\DeclareMathOperator{\A}{\Sigma}
\newcommand{\norm}[1]{\left\lVert #1 \right\rVert}
\newcommand{\opnorm}[1]{\norm{#1}_\text{op}}
\DeclareMathOperator{\Fl}{Fl}
\newcommand{\SectRI}[1]{\ensuremath{\Gamma^{\rho} (T^\alpha #1 )}}

\newcommand{\vectL}[1]{\Gamma^{\lambda} (#1) }
\newcommand{\LB}[1][\cdot \hspace{1pt} , \cdot]{\left[\hspace{1pt} #1 \hspace{1pt} \right]}
\newcommand{\BoundOp}[1]{\mathcal{L}\left(#1\right)}
\DeclareMathOperator{\evol}{evol}
\DeclareMathOperator{\Evol}{Evol}
\DeclareMathOperator{\one}{{\bf 1}}

\newcommand\opn{\ensuremath{\mathrel{\mathpalette\opncls\circ}}}

\newcommand{\opncls}[2]{
  \ooalign{$#1\subseteq$\cr
  \hidewidth\raisefix{#1}\hbox{$#1{\stylefix{#1}#2}\mkern2mu$}\cr}}
\def\raisefix#1{
  \ifx#1\displaystyle
    \raise.39ex
  \else
    \ifx#1\textstyle
      \raise.39ex
    \else
      \ifx#1\scriptstyle
        \raise.275ex
      \else
        \raise.150ex
      \fi
    \fi
  \fi
}
\def\stylefix#1{
  \ifx#1\displaystyle
    \scriptstyle
  \else
    \ifx#1\textstyle
      \scriptstyle
    \else
      \ifx#1\scriptstyle
        \scriptscriptstyle
      \else
        \scriptscriptstyle
      \fi
    \fi
  \fi
}
\DeclareFontFamily{U}{mathx}{\hyphenchar\font45}
\DeclareFontShape{U}{mathx}{m}{n}{
      <5> <6> <7> <8> <9> <10>
      <10.95> <12> <14.4> <17.28> <20.74> <24.88>
      mathx10
      }{}
\DeclareSymbolFont{mathx}{U}{mathx}{m}{n}
\DeclareFontSubstitution{U}{mathx}{m}{n}
\DeclareMathAccent{\widecheck}{0}{mathx}{"71}
\DeclareMathAccent{\wideparen}{0}{mathx}{"75}
\usepackage{graphicx}%
\usepackage[all,cmtip,line]{xy}%
\CompileMatrices%
\usepackage[notcite,notref,final]{showkeys}%
\usepackage[normalem]{ulem}
\usepackage{enumitem}%
\setlist[enumerate]{label={\alph*})}%
\usepackage{hyperref}%
\hypersetup{
 urlcolor = red,
 colorlinks = true,
 linkcolor = blue,
 citecolor = blue,
 linktocpage = true,
 pdftitle = {The Lie group of bisections of a Lie groupoid},
 pdfauthor = {Alexander Schmeding, Christoph Wockel},
 bookmarksopen = true,
 bookmarksopenlevel = 1,
 unicode = true,
 hypertexnames =false 
}%
\bibliographystyle{new}%
\newcommand{\Bis}{\ensuremath{\op{Bis}}}
\newcommand{\Lf}{\ensuremath{\mathbf{L}}}

\newcommand{\eqclass}[1]{\ensuremath{[#1]}}

\begin{document}

\begin{flushright}
   {\sf ZMP-HH/14-18}\\
   {\sf Hamburger$\;$Beitr\"age$\;$zur$\;$Mathematik$\;$Nr.$\;$523}\\[2mm]
\end{flushright}

\title{The Lie group of bisections of a Lie groupoid} \author{Alexander
Schmeding\footnote{NTNU Trondheim, Norway
\href{mailto:alexander.schmeding@math.ntnu.no}{alexander.schmeding@math.ntnu.no}
}%
~~and Christoph Wockel\footnote{University of Hamburg, Germany
\href{mailto:christoph@wockel.eu}{christoph@wockel.eu}}}
{\let\newpage\relax\maketitle}

\begin{abstract}
 In this article we endow the group of bisections of a Lie groupoid with
 compact base with a natural locally convex Lie group structure. Moreover, we
 develop thoroughly the connection to the algebra of sections of the associated
 Lie algebroid and show for a large class of Lie groupoids that their groups of
 bisections are regular in the sense of Milnor.
\end{abstract}

\medskip

\textbf{Keywords:} global analysis, Lie groupoid, infinite-dimensional Lie group,
mapping space, local addition, bisection, regularity of Lie groups

\medskip

\textbf{MSC2010:} 22E65 (primary); 	58H05, 46T10, 58D05 (secondary)

\tableofcontents

\section*{Introduction}

\begin{tabsection}
 Infinite-dimensional and higher structures are amongst the important concepts
 in modern Lie theory. This comprises homotopical and higher Lie algebras
 ($L_{\infty}$-algebras) and Lie groups (group stacks and Kan simplicial
 manifolds), Lie algebroids, Lie groupoids and generalisations thereof (e.g.,
 Courant algebroids) and infinite-dimensional locally convex Lie groups and Lie
 algebras. This paper is a contribution to the heart of Lie theory in the sense
 that is connects two regimes, namely the theory of Lie groupoids, Lie
 algebroids and infinite-dimensional Lie groups and Lie algebras. This
 connection is established by associating to a Lie groupoid its group of
 bisections and establishing a locally convex Lie group structure on this
 group.
 
 The underlying idea is not new per se, statements like ``...the group of
 (local) bisections is a (local) Lie group whose Lie algebra is given by the
 sections of the associated Lie algebroid...'' can be found at many places in
 the literature. In fact it depends on the setting of generalised manifolds
 that one uses, whether or not this statement is a triviality or a theorem. For
 instance, if the category of smooth spaces in which one works is cartesian
 closed and has finite limits, then the bisections are automatically a group
 object in this category and the only difficulty might be to calculate its Lie
 algebra. This applies for instance to diffeological spaces, where it follows
 from elementary theory that the bisections of a diffeological groupoid are
 naturally a diffeological group\footnote{A natural diffeology on the
 bisections of a diffeological groupoid would be the subspace diffeology of the
 functional diffeology on the space of smooth maps from the objects to the
 arrows.}. Another such setting comes from (higher) smooth topoi. See for
 instance
 \cite{Schreiber13Differential-cohomology-in-a-cohesive-infinity-topos,FiorenzaRogersSchreiber13Higher-geometric-prequantum-theory}
 for a generalisation of bisections to higher groupoids and
 \cite{Domenico-Fiorenza13L-infinity-algebras-of-local-observables-from-higher-prequantum-bundles}
 for a construction of the corresponding infinitesimal $L_{\infty}$-algebra.
 Moreover, in synthetic differential geometry the derivation of the Lie algebra
 of the group of bisections can also be done formally
 \cite{Nishimura06The-Lie-algebra-of-the-group-of-bisections}.
 
 What we aim for in this paper is a natural locally convex Lie group structure
 on the group of bisections, which is not covered by the settings and
 approaches mentioned above. What comes closest to this aim are the results
 from \cite{Rybicki02A-Lie-group-structure-on-strict-groups}, where a group
 structure in the ``convenient setting of global analysis'' is established.
 However, the results of the present paper are much stronger and more general
 than the ones from \cite{Rybicki02A-Lie-group-structure-on-strict-groups} in
 various respects, which we now line out. First of all, we work throughout in
 the locally convex setting
 \cite{neeb2006,hg2002a,Milnor84Remarks-on-infinite-dimensional-Lie-groups} for
 infinite-dimensional manifolds. The locally convex setting has the advantage
 that it is compatible with the underlying topological framework. In
 particular, smooth maps and differentials of those are automatically
 continuous. This will become important in the geometric applications that we
 have in mind (work in progress). Secondly, we not only construct a Lie group
 structure on the bisections, but also relate it to (and in fact derive it
 from) the canonical smooth structure on manifolds of mappings. Thus one is
 able to identify many naturally occurring maps as smooth maps. For instance,
 the natural action of the bisections on the arrow manifold is smooth, which
 allows for an elegant identification of the Lie bracket on the associated Lie
 algebra. The latter then gives rise to a \emph{natural} isomorphism between
 the functors that naturally arise in this context, namely the (bi)section
 functors and the Lie functors. This is the third important feature of this
 paper. The last contribution of this paper is that we prove that the
 bisections are in fact a regular Lie group for all Banach-Lie groupoids.
 
 On the debit side, one should say that the exhaustive usage of smooth
 structures on mapping spaces forces us to work throughout with locally
 metrisable manifolds and over compact bases, although parts of our results
 should be valid in greater generality. Moreover, the proof of regularity is
 quite technical, which is the reason for deferring several details of it to a
 separate section. To say it once more, the results are not surprising in any
 respect, it is the coherence of all these concepts that is the biggest value
 of the paper. \bigskip
 
 We now go into some more detail and explain the main results. Suppose
 $\cG = (G \toto M)$ is a Lie groupoid. This means that $G,M$ are smooth
 manifolds, equipped with submersions $\alpha,\beta\from G\to M$ and an
 associative and smooth multiplication $G\times _{\alpha,\beta}G\to G$ that
 admits a smooth identity map $1\from M\to G$ and a smooth inversion
 $\iota\from G\to G$. Then the bisections $\Bis(\cG)$ of $\cG$ are the sections
 $\sigma\from M\to G$ of $\alpha$ such that $\beta \circ \sigma$ is a
 diffeomorphism of $M$. This becomes a group with respect to
 \begin{equation*}
  (\sigma \star \tau ) (x) \coloneq \sigma ((\beta \circ \tau)(x))\tau(x)\text{ for }  x \in M.
 \end{equation*}
 Our main tool to construct a Lie group structure on the group of bisections
 are certain local additions on the space of arrows $G$. This is generally the
 tool one needs on the target manifold to understand smooth structures on
 mapping spaces (see
 \cite{michor1980,conv1997,Wockel13Infinite-dimensional-and-higher-structures-in-differential-geometry}
 or Appendix \ref{Appendix: MFD}). We require that the local addition on $G$ is
 adapted to the source projection $\alpha$, i.e.\ it restricts to a local
 addition on each fibre $\alpha^{-1} (x)$ for $ x \in M$. If the groupoid $\cG$
 admits such an addition, we deduce the following (Theorem \ref{theorem: A}):
 
 \begin{inttheorem}\label{thm:inttheorem_a}
  Suppose $\cG = (G \toto M)$ is a locally convex and locally metrisable Lie
  groupoid with $M$ compact. If $G$ admits an adapted local addition, then the
  group $\Bis (\cG)$ is a submanifold of $C^\infty (M,G)$. With this structure,
  $\Bis (\cG)$ is a locally convex Lie group modelled on a metrisable space.
 \end{inttheorem}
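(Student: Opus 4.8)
The plan is to realise $\Bis(\cG)$ as an open subset of the manifold of sections of $\alpha$, and then to check smoothness of the group operations by decomposing them into the basic smooth operations of the mapping-space calculus. Since $M$ is compact and $G$ carries a local addition, $C^\infty(M,G)$ is a locally convex manifold which, around a map $f$, is modelled on the metrisable space $\Gamma(f^*TG)$ of smooth sections of the pullback of $TG$ (Appendix~\ref{Appendix: MFD}). Write $\pi\from TG\to G$ for the tangent projection and $V \coloneq \ker T\alpha\se TG$ for the vertical bundle of $\alpha$. The adapted local addition $\theta\from\Omega\to G$ furnishes, around a section $\sigma$ of $\alpha$, the canonical chart $X\mapsto(x\mapsto\theta(X(x)))$ on an open $0$-neighbourhood of $\Gamma(\sigma^*TG)$. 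That $(\pi,\theta)$ is a diffeomorphism onto a neighbourhood of the diagonal and restricts on $V$ to a fibrewise local addition means precisely that, for $v\in\Omega$, one has $\alpha(\theta(v)) = \alpha(\pi(v))$ if and only if $v\in V$. Applying this pointwise (with $\pi(X(x)) = \sigma(x)$, hence $\alpha(\pi(X(x))) = x$) shows that $\theta\circ X$ is again an $\alpha$-section exactly when $X$ takes values in $\sigma^*V$. As $V$ is a complemented subbundle of $TG$, the subspace $\Gamma(\sigma^*V)\se\Gamma(\sigma^*TG)$ is closed and complemented, so these charts exhibit $\Gamma_\alpha\coloneq\{\sigma\in C^\infty(M,G) : \alpha\circ\sigma = \id_M\}$ as a split submanifold of $C^\infty(M,G)$ with $T_\sigma\Gamma_\alpha = \Gamma(\sigma^*V)$.

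Next I would cut $\Gamma_\alpha$ down to the bisections. Postcomposition with the fixed smooth map $\beta$ is a smooth map $\Gamma_\alpha\to C^\infty(M,M)$, $\sigma\mapsto\beta\circ\sigma$. Since $M$ is compact, $\Diff(M)$ is open in $C^\infty(M,M)$; therefore $\Bis(\cG)$, being the preimage of $\Diff(M)$ under this continuous map, is open in $\Gamma_\alpha$. Hence $\Bis(\cG)$ is an open submanifold of $\Gamma_\alpha$ and so a submanifold of $C^\infty(M,G)$, modelled on the metrisable space $\Gamma(1^*V)$ at the unit section $1\from M\to G$, $x\mapsto 1_x$. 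This proves the first assertion.

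It remains to see that the group operations are smooth. Writing $\varphi_\tau\coloneq\beta\circ\tau\in\Diff(M)$, the multiplication reads $\sigma\star\tau = m\circ(\sigma\circ\varphi_\tau,\tau)$, where $m$ is the groupoid multiplication and the pair $(\sigma\circ\varphi_\tau,\tau)$ is composable because $\sigma$ is an $\alpha$-section; the inverse is $\sigma^{-1} = \iota\circ\sigma\circ(\beta\circ\sigma)^{-1}$, and one checks directly that both land in $\Bis(\cG)$. I would now build each operation from the smooth constituents of the mapping-space calculus (Appendix~\ref{Appendix: MFD}, \cite{michor1980,conv1997}): postcomposition with the fixed smooth maps $\beta$, $\iota$ and $m$ is smooth; the right precomposition action $C^\infty(M,G)\times\Diff(M)\to C^\infty(M,G)$, $(\sigma,\varphi)\mapsto\sigma\circ\varphi$, is smooth; forming composable pairs is smooth because $G\times_{\alpha,\beta}G$ is a submanifold of $G\times G$ and hence $C^\infty(M,G\times_{\alpha,\beta}G)$ is a submanifold of $C^\infty(M,G\times G)$; and inversion in $\Diff(M)$ is smooth. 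Composing these shows that $\star$ and $\sigma\mapsto\sigma^{-1}$ are smooth, so $\Bis(\cG)$ is a locally convex Lie group modelled on the metrisable space $\Gamma(1^*V)$.

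The main obstacle is the third step: joint composition of smooth maps is not smooth in general, so the crux is the smoothness of the precomposition action by the \emph{varying} diffeomorphism $\varphi_\tau = \beta\circ\tau$, together with the smoothness of inversion in $\Diff(M)$. It is exactly here that the compactness of $M$ and the full strength of the mapping-space calculus are indispensable, whereas the submanifold step is a bookkeeping consequence of the adapted local addition once the equivalence $\alpha(\theta(v)) = \alpha(\pi(v))\Leftrightarrow v\in V$ is established.
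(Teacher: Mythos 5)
Your proposal is correct and follows essentially the same route as the paper: the adapted local addition gives submanifold charts for the $\alpha$-sections with the vertical sections as model space (the paper's Proposition \ref{Proposition: SectMFD}), $\Bis(\cG)$ is the open preimage of $\Diff(M)$ under the smooth map $\beta_*$, and smoothness of $\star$ and inversion is obtained by the same decomposition into push-forwards by $m$, $\iota$, $\beta$, the composition map, the submanifold $C^\infty(M,G\times_{\alpha,\beta}G)\se C^\infty(M,G\times G)$ (the paper's Lemma \ref{lemma: composing:submanifold}), and inversion in the Lie group $\Diff(M)$. The only deviation is your claim that $\Gamma(\sigma^*V)$ is \emph{complemented} in $\Gamma(\sigma^*TG)$ (making the sections a split submanifold): fibrewise complementedness of $\ker T\alpha$ does not by itself yield a continuous projection on the section space, and the paper accordingly proves only closedness (noting splitness as a special case, e.g.\ for finite-dimensional $G$), which suffices for the argument.
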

 
 After having constructed the Lie group structure on $\Bis (\cG)$ we show that
 a large variety of Lie groupoids admit adapted local additions, including all
 finite-dimensional Lie groupoids, all Banach Lie-groupoids with smoothly
 paracompact $M$ and all locally trivial Lie groupoids with locally exponential
 vertex group.
 
 We then determine the Lie algebra associated to $\Bis(\cG)$. Our investigation
 shows that the Lie algebra is closely connected to the Lie algebroid
 associated to the Lie groupoid. Explicitly, Theorem \ref{theorem: LAlg} may be
 subsumed as follows.
 
 \begin{inttheorem}
  Suppose $\cG$ is a Lie groupoid which satisfies the assumptions of Theorem
  \ref{thm:inttheorem_a}. Then the Lie algebra of the Lie group $\Bis (\cG)$ is
  naturally isomorphic (as a topological Lie algebra) to the Lie algebra of
  sections of the Lie algebroid associated to $\cG$, endowed with the negative
  of the usual bracket.
 \end{inttheorem}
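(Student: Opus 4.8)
The plan is to identify the modelling space first and then to read off the bracket from the natural smooth action of $\Bis(\cG)$ on the arrow manifold $G$.

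First I would compute the tangent space of $\Bis(\cG)$ at the unit bisection $1\from M\to G$. By Theorem~\ref{thm:inttheorem_a} the group $\Bis(\cG)$ is a submanifold of $C^\infty(M,G)$, so $T_1\Bis(\cG)$ is a linear subspace of $T_1 C^\infty(M,G)=\Gamma(1^\ast TG)$, the sections of the pullback bundle, with $T_1\ev_x(\xi)=\xi(x)$. A tangent vector $\xi$ arises from a smooth curve $t\mapsto\sigma_t$ of bisections with $\sigma_0=1$; differentiating the identity $\alpha\circ\sigma_t=\id_M$ at $t=0$ yields $T\alpha\circ\xi=0$, so $\xi$ takes values in $1^\ast T^\alpha G=\Lf(\cG)$, while the (open) condition that $\beta\circ\sigma_t$ be a diffeomorphism imposes no constraint. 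Since the submanifold structure of $\Bis(\cG)$ is the one inherited from $C^\infty(M,G)$ via the charts constructed for Theorem~\ref{thm:inttheorem_a}, the tangent space $T_1\Bis(\cG)$ is exactly this space of source-vertical sections, giving an isomorphism $T_1\Bis(\cG)\cong\Gamma(\Lf(\cG))$ of locally convex spaces.

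Next I would exploit the smooth left action
\[
 \Phi\from\Bis(\cG)\times G\to G,\qquad \Phi(\sigma,g)=\sigma(\beta(g))\,g,
\]
which satisfies $\Phi(\sigma,\Phi(\tau,g))=\Phi(\sigma\star\tau,g)$ and whose smoothness is exactly the statement about the action on the arrow manifold alluded to in the introduction. For $\xi\in T_1\Bis(\cG)$ the associated fundamental vector field is obtained by differentiating $\sigma\mapsto\Phi(\sigma,g)=R_g(\sigma(\beta(g)))$ at $\sigma=1$, giving
\[
 \zeta_\xi(g)=T_{1(\beta(g))}R_g\bigl(\xi(\beta(g))\bigr),
\]
with $R_g\from\alpha^{-1}(\beta(g))\to\alpha^{-1}(\alpha(g))$, $h\mapsto hg$. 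This is precisely the value at $g$ of the right-invariant, source-vertical vector field $\xi^R$ on $G$ extending $\xi$; recall that it is this correspondence $\xi\mapsto\xi^R$ that defines the bracket on $\Gamma(\Lf(\cG))$ via $[\xi,\eta]_{\Lf(\cG)}^R=[\xi^R,\eta^R]$.

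The bracket and, in particular, its sign now follow from the general principle that the fundamental vector fields of a smooth \emph{left} action of a locally convex Lie group form an \emph{anti}-homomorphic image of its Lie algebra, i.e.\ $[\zeta_\xi,\zeta_\eta]=-\zeta_{[\xi,\eta]}$, where $[\cdot,\cdot]$ denotes the bracket of $T_1\Bis(\cG)$. Combining this with $\zeta_\xi=\xi^R$ and the definition of the algebroid bracket yields
\[
 [\xi,\eta]_{\Lf(\cG)}^R=[\xi^R,\eta^R]=[\zeta_\xi,\zeta_\eta]=-\zeta_{[\xi,\eta]}=-[\xi,\eta]^R,
\]
so that $[\xi,\eta]=-[\xi,\eta]_{\Lf(\cG)}$. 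Under the identification of the first paragraph the Lie algebra of $\Bis(\cG)$ is therefore $\Gamma(\Lf(\cG))$ equipped with the negative of the usual Lie algebroid bracket, and continuity of both brackets makes this an isomorphism of topological Lie algebras.

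The conceptual content is thus the minus sign, which is forced by $\Phi$ being a left action. I expect the main obstacle to be the rigorous justification that $\zeta_\xi$ really equals $\xi^R$: this is where the concrete description of tangent vectors to the mapping space $C^\infty(M,G)$ and the smoothness of $\Phi$ must be combined carefully, in particular to differentiate the evaluation $\sigma\mapsto\sigma(\beta(g))$ through the submanifold $\Bis(\cG)$. A secondary point is to ensure that the anti-homomorphism property of fundamental vector fields is available in the locally convex category for the bracket convention used here; this can be reduced to the case of $\Bis(\cG)$ acting on itself by left translations together with the standard relation between left- and right-invariant vector fields, and is otherwise formal.
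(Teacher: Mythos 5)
Your proposal is correct and is essentially the paper's own argument: the paper likewise identifies $T_{1}\Bis(\cG)$ with the vertical sections $\Gamma(\Lf(\cG))$, proves in Proposition \ref{proposition: related} that the right-invariant vector field $\overrightarrow{\varphi_{\cG}(X)}$ on $G$ is $\gamma$-related to $X^{\rho}\times 0$ (your identity $\zeta_{\xi}=\xi^{R}$, which the paper establishes by the curve computation you anticipate as the main obstacle), and then gets the sign from $[X^{\rho},Y^{\rho}]=-[X,Y]^{\rho}$ together with relatedness of brackets, which is precisely the mechanism behind your ``fundamental vector fields of a left action form an anti-homomorphism'' principle. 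The only ingredient of Theorem \ref{theorem: LAlg} missing from your sketch is the check that $\varphi_{\cG}$ is natural in $\cG$ (the commuting square of functors against $\Bis$ and $\Lf$), a routine verification via Theorem \ref{thm:tangent_map_of_pull_back_and_push_forward}.
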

 
 After this we briefly discuss some perspectives for further research. We then
 investigate regularity properties of the Lie group $\Bis (\cG)$. To this end,
 recall the notion of regularity for Lie groups:
 
 Let $H$ be a Lie group modelled on a locally convex space, with identity
 element $\one$, and $r\in \N_0\cup\{\infty\}$. We use the tangent map of the
 right translation $\rho_h\colon H\to H$, $x\mapsto xh$ by $h\in H$ to define
 $v.h\coloneq T_{\one} \rho_h(v) \in T_h H$ for $v\in T_{\one} (H) =: L(H)$.
 Following \cite{Dahmen2012} and \cite{hg2012}, $H$ is called
 \emph{$C^r$-regular} if for each $C^r$-curve
 $\gamma\colon [0,1]\rightarrow L(H)$ the initial value problem
 \begin{equation}\label{eq: regular}
  \begin{cases}
  \eta'(t)&= \gamma(t).\eta(t)\\ \eta(0) &= \one
  \end{cases}
 \end{equation}
 has a (necessarily unique) $C^{r+1}$-solution
 $\Evol (\gamma)\coloneq\eta\colon [0,1]\rightarrow H$, and the map
 \begin{displaymath}
  \evol \colon C^r([0,1],L(H))\rightarrow H,\quad \gamma\mapsto \Evol
  (\gamma)(1)
 \end{displaymath}
 is smooth. If $H$ is $C^r$-regular and $r\leq s$, then $H$ is also
 $C^s$-regular. A $C^\infty$-regular Lie group $H$ is called \emph{regular}
 \emph{(in the sense of Milnor}) -- a property first defined in
 \cite{Milnor84Remarks-on-infinite-dimensional-Lie-groups}. Every finite
 dimensional Lie group is $C^0$-regular (cf. \cite{neeb2006}). Several
 important results in infinite-dimensional Lie theory are only available for
 regular Lie groups (see
 \cite{Milnor84Remarks-on-infinite-dimensional-Lie-groups}, \cite{neeb2006},
 \cite{hg2012}, cf.\ also \cite{conv1997} and the references therein). We prove
 the following result (Theorems \ref{thm:ltriv_reg} and \ref{theorem:
 Banach:reg}):
 
 \begin{inttheorem}
  Let $\cG = (G \toto M)$ be a Lie groupoid that admits a local addition and has
  compact space of objects $M$. Suppose either that $G$ is a Banach-manifold or
  that $\cG$ is locally trivial with locally exponential and $C^{k}$-regular
  vertex group. Then the Lie group $\Bis (\cG)$ is $C^k$-regular for each
  $k \in \N_0 \cup \{\infty\}$. In particular, the group $\Bis (\cG)$ is
  regular in the sense of Milnor.
 \end{inttheorem}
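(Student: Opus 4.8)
The plan is to reduce regularity of the Lie group $\Bis(\cG)$ to the regularity of an associated structure on the mapping space $C^\infty(M,G)$, exploiting that, by Theorem \ref{thm:inttheorem_a}, $\Bis(\cG)$ sits as a submanifold of $C^\infty(M,G)$ and that its Lie algebra is identified (by Theorem \ref{theorem: LAlg}) with the sections $\Gamma(A\cG)$ of the Lie algebroid. The initial value problem \eqref{eq: regular} on $\Bis(\cG)$ is an ODE in the infinite-dimensional group, and the overarching strategy is to solve it \emph{pointwise over $M$}: a curve $\gamma \in C^r([0,1],\Gamma(A\cG))$ should be regarded as a time-dependent section of the Lie algebroid, and its evolution $\Evol(\gamma)$ should be constructed by integrating the corresponding flow on the arrow manifold $G$ fibrewise. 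Concretely, I would first rewrite $v.\eta$ in \eqref{eq: regular} using the explicit formula for the group multiplication $\star$ and the local addition, turning the evolution equation into a family of ODEs governed by a time-dependent vector field on $G$ that is tangent to the $\alpha$-fibres; the existence of $\Evol(\gamma)$ then becomes the existence of the flow of this vector field, and smoothness of $\evol$ becomes smooth dependence of this flow on parameters.

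\smallskip

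For the two cases I would proceed separately, since the tools differ. In the Banach case, I would invoke the standard theory of ODEs on Banach manifolds: the time-dependent vector field on $G$ induced by $\gamma$ has a flow by the Banach Picard--Lindel\"of theorem, and the usual theorems on smooth dependence on initial conditions and parameters (applied to the flow on the Banach manifold $G$, uniformly over the compact base $M$) yield the $C^{r+1}$-solution and the smoothness of $\evol$. The compactness of $M$ is what allows one to pass from fibrewise flows to a globally defined evolution in $C^\infty(M,G)$ and to obtain uniform existence times. In the locally trivial case with locally exponential, $C^k$-regular vertex group $H$, I would instead use a local trivialisation to reduce the evolution equation to an equation whose solution is controlled by the evolution map of the vertex group: regularity of $H$ supplies $\Evol$ and $\evol$ locally, and I would patch these local solutions together over $M$ using the gauge/transition data and the compactness of $M$, checking that the patched object is again a bisection.

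\smallskip

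In both cases the key verification after producing $\eta$ is that it actually takes values in $\Bis(\cG)$ rather than merely in $C^\infty(M,G)$ -- i.e.\ that each $\eta(t)$ is a section of $\alpha$ whose composite with $\beta$ is a diffeomorphism. The first property follows because the generating vector field is tangent to the $\alpha$-fibres (the local addition is adapted), so the flow preserves $\alpha$-sections; the diffeomorphism property should follow from the fact that $\eta$ is a continuous path in the (open) subset of invertible elements starting at the identity bisection. Finally, I would upgrade solvability to $C^k$-regularity by checking the smoothness of $\evol$ as a map on the locally convex space $C^k([0,1],\Gamma(A\cG))$, which is where one transfers the smooth parameter-dependence of the underlying flow on $G$ to the mapping-space level.

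\smallskip

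\textbf{Expected main obstacle.} I anticipate that the hard part is \emph{not} the pointwise existence of solutions, but establishing smoothness of $\evol$ as a map \emph{between the locally convex mapping spaces}, uniformly in the base point and compatibly with the manifold-of-mappings structure. One must control the flow on $G$ together with all its derivatives in a way that survives the passage to $C^\infty(M,G)$; in the Banach case this requires careful uniform estimates over the compact base, while in the locally trivial case the genuine difficulty is the gluing: local evolutions built from the vertex group's $\evol$ must be shown to agree on overlaps and to assemble into a single smooth map on the section space, which is precisely the technical content the authors flag as deferring to a separate section.
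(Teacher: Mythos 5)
Your proposal splits into two halves of very different quality relative to the paper.

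For the Banach case, your route is essentially the paper's (Theorem \ref{theorem: Banach:reg} together with Proposition \ref{proposition: flow}): the evolution equation is rewritten as the flow equation \eqref{eq:diffeq} of the time-dependent right-invariant vector field on $G$ attached to $\eta$, solutions are produced fibrewise and shown to exist on all of $[0,1]$ for $\eta$ with values in a suitable zero-neighbourhood $\Omega\opn\Gamma(\Lf(\cG))$, and $C^{0}$-regularity (hence $C^{k}$-regularity for all $k$) follows from smoothness of $\evol$ via \cite[Proposition 1.3.10]{Dahmen2012}. You also correctly located the main difficulty in the smooth parameter dependence at the mapping-space level. Two caveats: ``the usual theorems on smooth dependence on parameters'' do not apply verbatim, since the parameter space $C^{0}([0,1],\Gamma(\Lf(\cG)))$ is not Banach -- the paper has to work in the $C^{r,s}$-calculus of \cite{alas2012} with uniform Lipschitz estimates over the compact base. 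And your argument that each $\eta(t)$ lies in $\Bis(\cG)$ because it is ``a continuous path in the open subset of invertible elements'' is not a proof: a continuous path starting in an open set can leave it. The paper instead shows directly that $\beta\circ\Fl^{f}(s,t,\cdot,\eta)\circ 1$ is a diffeomorphism by exhibiting its inverse via the time-reversed flow (Proposition \ref{proposition: flow} \ref{proposition: flow_d}).

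For the locally trivial case there is a genuine gap, and the paper in fact takes a completely different, non-analytic route. Your claim that ``regularity of $H$ supplies $\Evol$ and $\evol$ locally'' fails: a section of the Lie algebroid of a locally trivial groupoid (the Atiyah algebroid $TP/H$) has, besides its vertical (gauge) component, a horizontal component whose anchor is a time-dependent vector field on $M$. The vertex group controls only the vertical part; integrating the horizontal part is precisely the regularity problem for $\Diff(M)$, which no amount of data from $H$ resolves. Worse, the patching step is ill-posed as described: the underlying diffeomorphisms of $M$ generated by the horizontal part move points between the trivialising opens $U_i$, so a ``local evolution'' built over $U_i$ does not stay in the chart where it was defined, and there is no cocycle of local solutions to glue. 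The paper avoids all of this (Theorem \ref{thm:ltriv_reg}): it identifies $\cG$ with the gauge groupoid of a principal $H$-bundle, uses the extension of Lie groups
\begin{equation*}
 \Bis\left(\frac{P\times_{M} P}{H}\right)\hookrightarrow \Bis\left(\frac{P\times P}{H}\right)\twoheadrightarrow \im(\beta_{*})
\end{equation*}
from Example \ref{exmp:gauge_and_automorphism_group} (made locally trivial by an explicitly constructed local smooth section of $\beta_*$), identifies the kernel with the gauge group $\Gau(P)$ and the image with an open subgroup of $\Diff(M)$ -- both $C^{k}$-regular by \cite{Glockner13Regularity-properties-of-infinite-dimensional-Lie-groups} -- and then invokes the fact that $C^{k}$-regularity is an extension property \cite[Appendix B]{NeebSalmasian12Differentiable-vectors-and-unitary-representations-of-Frechet-Lie-supergroups}. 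If you repaired your patching argument by first solving the projected equation in $\Diff(M)$ and then a vertical equation in $\Gau(P)$, you would in effect be re-proving this extension property in a special case; the abstract argument is both shorter and cleaner.
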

\end{tabsection}

\begin{tabsection}
 Note that all assumptions that we will impose throughout this paper are
 satisfied for finite-dimensional Lie groupoids over compact manifolds. For
 this case, the above theorems may be subsumed as follows:
\end{tabsection}

\begin{inttheorem}
 If $\cG=(G\toto M)$ is a finite-dimensional Lie groupoid with compact $M$,
 then $\Bis(\cG)$ is a regular Fr\'echet-Lie group modelled on the space of
 sections $\Gamma(\Lf(\cG))$ of the  Lie algebroid $\Lf(\cG)$. Moreover, the Lie
 bracket on $\Gamma(\Lf(\cG))$ induced from the Lie group structure on
 $\Bis(\cG)$ is the negative of the Lie bracket underlying $\Lf(\cG)$.
\end{inttheorem}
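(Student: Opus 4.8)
The plan is to obtain the statement as the common specialisation of the three main theorems stated above, so that the task reduces to checking that a finite-dimensional Lie groupoid over a compact base satisfies all of their hypotheses and then reading off the conclusions. First I would verify the standing assumptions of Theorem~\ref{thm:inttheorem_a}. A finite-dimensional smooth manifold is trivially modelled on a locally convex space and is locally metrisable, so $\cG$ is a locally convex and locally metrisable Lie groupoid; compactness of $M$ is part of the hypothesis. The one genuinely non-trivial input is the existence of an adapted local addition on $G$, but this is exactly the case already recorded above, where it is asserted that every finite-dimensional Lie groupoid admits an adapted local addition (such an addition can be produced, for instance, from a spray on $G$ that is tangent to the source fibres $\alpha^{-1}(x)$, so that its associated exponential restricts to a local addition on each fibre). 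With this input, Theorem~\ref{thm:inttheorem_a} yields that $\Bis(\cG)$ is a submanifold of $C^\infty(M,G)$ and a locally convex Lie group modelled on a metrisable space.

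Next I would identify the model space. By the second theorem above, the Lie algebra of $\Bis(\cG)$ is, as a topological Lie algebra, isomorphic to the section algebra $\Gamma(\Lf(\cG))$ of the associated Lie algebroid, endowed with the negative of the usual bracket; this simultaneously settles the bracket assertion of the corollary. Since $\Lf(\cG)$ is a finite-rank smooth vector bundle over the compact manifold $M$, its space of smooth sections, equipped with the $C^\infty$-topology of uniform convergence of all derivatives, is a Fr\'echet space. As the model space of the manifold $\Bis(\cG)$ is its tangent space at the identity bisection, namely $\Gamma(\Lf(\cG))$, it follows that $\Bis(\cG)$ is a Fr\'echet--Lie group modelled on $\Gamma(\Lf(\cG))$, as claimed.

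Finally, for regularity I would invoke the third theorem above. A finite-dimensional manifold is in particular a Banach manifold, so $G$ is a Banach manifold and the Banach case of that theorem applies verbatim: $\Bis(\cG)$ is $C^k$-regular for every $k\in\N_0\cup\{\infty\}$, and in particular $C^\infty$-regular, which is precisely regularity in the sense of Milnor.

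The substance of the argument has already been discharged in the body of the paper -- constructing the adapted local addition and establishing the three theorems -- so at the level of this corollary I do not expect any real obstacle. The only points deserving a word of care are the verification that a finite-dimensional groupoid does admit an adapted local addition (the sole hypothesis that is not immediate) and the observation that the smooth sections of a finite-rank vector bundle over a compact base form a Fr\'echet, rather than merely locally convex metrisable, model space; both are routine, and everything else is a matter of matching hypotheses and quoting the respective theorem.
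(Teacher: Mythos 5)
Your proposal is correct and is essentially the paper's own argument: the statement is presented there precisely as the specialisation of the three main theorems to finite-dimensional Lie groupoids over a compact base, with the adapted local addition supplied by Proposition \ref{prop: ex:locadd} (finite-dimensional manifolds are Banach and a compact base admits smooth partitions of unity) and the Fr\'echet property of the model space $\Gamma(\Lf(\cG))$ by Theorem \ref{thm: sect}.
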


\section{Locally convex Lie groupoids and Lie groups}
\label{sec:locally_convex_lie_groupoids_and_lie_groups}

\begin{tabsection}
 In this section we recall the Lie theoretic notions and conventions that we
 are using in this paper. We refer to
 \cite{Mackenzie05General-theory-of-Lie-groupoids-and-Lie-algebroids} for an
 introduction to (finite-dimensional) Lie groupoids and the associated group of
 bisections. The notation for Lie groupoids and their structural maps also
 follows \cite{Mackenzie05General-theory-of-Lie-groupoids-and-Lie-algebroids}.
 However, we do not restrict our attention to finite dimensional Lie groupoids.
 Hence, we have to augment the usual definitions with several comments. Note
 that we will work all the time over a fixed base manifold $M$.
\end{tabsection}

\begin{setup}
 Let $\cG = (G \toto M)$ be a groupoid over $M$ with source projection
 $\alpha \colon G \rightarrow M$ and target projection
 $\beta \colon G \rightarrow M$. Then $\cG$ is a \emph{(locally convex and
 locally metrisable) Lie groupoid over $M$}\footnote{See Appendix
 \ref{Appendix: MFD} for references on differential calculus in locally convex
 spaces.} if
 \begin{itemize}
  \item the objects $M$ and the arrows $G$ are locally convex and locally
        metrisable manifolds,
  \item the smooth structure turns $\alpha$ and $\beta$ into surjective
        submersions, i.e., they are locally projections\footnote{This implies
        in particular that the occurring fibre-products are submanifolds of the
        direct products, see \cite[Appendix
        C]{Wockel13Infinite-dimensional-and-higher-structures-in-differential-geometry}.}
  \item the partial multiplication
        $m \colon G \times_{\alpha,\beta} G \rightarrow G$, the object
        inclusion $1 \colon M \rightarrow G$ and the inversion
        $\iota \colon G \rightarrow G$ are smooth.
 \end{itemize}
 The \emph{group of bisections} $\Bis (\cG)$ of $\cG$ is given as the set of sections
 $\sigma \colon M \rightarrow G$ of $\alpha$ such that
 $\beta \circ \sigma \colon M \rightarrow M$ is a diffeomorphism. This is a
 group with respect to
 \begin{equation}\label{eq: BISGP1}
  (\sigma \star \tau ) (x) \coloneq \sigma ((\beta \circ \tau)(x))\tau(x)\text{ for }  x \in M.
 \end{equation}
 The object inclusion $1 \colon M \rightarrow G$ is then the neutral element
 and the inverse element of $\sigma$ is
 \begin{equation}\label{eq: BISGP2}
  \sigma^{-1} (x) \coloneq \iota( \sigma ((\beta \circ\sigma)^{-1} (x)))\text{ for } x \in M.
 \end{equation}
\end{setup}

\begin{remark}\label{rem:structure_on_bisections}
 \begin{enumerate}
  \item The definition of bisection is not symmetric with respect to $\alpha$
        and $\beta$. This lack of symmetry can be avoided by defining a
        bisection as a set (see \cite[p.
        23]{Mackenzie05General-theory-of-Lie-groupoids-and-Lie-algebroids}).
        This point of view is important for instance in Poisson geometry, where
        one wants to restrict the image of bisection to be Lagrangian
        submanifolds in a symplectic groupoid
        \cite{Rybicki01On-the-group-of-Lagrangian-bisections-of-a-symplectic-groupoid,Xu97Flux-homomorphism-on-symplectic-groupoids}.
        However, we will not need this point of view in the present article.
  \item Each bisection $\sigma$ gives rise to a \emph{left-translation}
        $L_\sigma \colon G \rightarrow G, g \mapsto \sigma (\beta (g))g$. The
        map
        \begin{displaymath}
         (\Bis (\cG), \star) \rightarrow (\Diff (G),\circ), \sigma \mapsto
         L_\sigma
        \end{displaymath}
        induces a group isomorphism onto the subgroup of all left translations
        (cf.\ \cite[p.\
        22]{Mackenzie05General-theory-of-Lie-groupoids-and-Lie-algebroids}).
        Similarly we could identify the bisections with right translations on
        $G$
  \item \label{rem:structure_on_bisections_c} The group of bisections naturally
        acts on the arrows by
        \begin{equation*}
         \gamma \colon \Bis (\cG) \times G \rightarrow G , (\psi , g) \mapsto
         L_\psi (g) = \psi (\beta (g)) g
        \end{equation*}
        (in fact, the group structure on $\Bis(\cG)$ is derived from this
        action, see \cite[\S
        1.4]{Mackenzie05General-theory-of-Lie-groupoids-and-Lie-algebroids} or
        \cite[\S
        15.3]{Cannas-da-SilvaWeinstein99Geometric-models-for-noncommutative-algebras}).
        This action will play an important r\^ole when computing the Lie
        algebra of the Lie group of bisections in Section
        \ref{sec:the_lie_algebra_of_the_group_of_bisections}.
  \item The group $\Bis(\cG)$ depends functorially on $\cG$ in the following
        way. Suppose $\cH=(H\toto M)$ is another Lie groupoid over $M$ and that
        $f\from \cG\to \cH$ is a morphism of Lie groupoids over $M$, i.e., it
        is a smooth functor $f\from G\to H$ which is the identity on the
        objects $f \circ 1_{\cG}=1_{\cH}$. Then there is an induced morphism of
        the groups of bisections
        \begin{equation*}
         \Bis(f)\from \Bis(\cG)\to \Bis(\cH),\quad \sigma\mapsto f\circ\sigma.
        \end{equation*}
        If $\cK$ is another Lie groupoid over $M$ and $g\from \cH\to \cK$
        another morphism, then we clearly have
        $\Bis(g \circ f)=\Bis(g) \circ \Bis(f)$. Lie groupoids over $M$,
        together with their morphisms form a category $\cat{LieGroupoids}_{M}$.
        We can thus interpret $\Bis$ as a functor
        \begin{equation}\label{eqn:functor_1}
         \Bis\from \cat{LieGroupoids}_{M}\to \cat{Groups}.
        \end{equation}
  \item \label{rem: tangentgpd}From $\cG=(G\toto M)$ we can construct a new Lie
        groupoid $T\cG\coloneq (TG\toto TM)$. This has the surjective
        submersions $T \alpha$ and $T \beta$ as source and target projections,
        $T1$ as object inclusion and $T \iota$ as inversion.
        
        In order to define the multiplication we first have to identify
        $T(G\times_{\alpha,\beta}G)$ with $TG\times_{T \alpha,T \beta}G$. To
        this end we first recall that $G\times_{\alpha,\beta}G$ is the
        submanifold $\{(a,b)\in G\times G\mid \alpha(a)=\beta(b)\}$ of
        $G\times G$. We may thus identify $T(G\times_{\alpha,\beta}G)$ via the
        isomorphism $T(G\times G)\cong TG\times TG$ with a subset of
        $TG\times TG$. Now we claim that
        \begin{equation}\label{eqn1}
         T(G\times_{\alpha,\beta}G)=\{(x,y)\in TG\times TG\mid T \alpha(x)=T \beta(y) \}=TG\times_{T \alpha,T \beta}TG
        \end{equation}
        as subsets (and thus as submanifolds) of $TG\times TG$. Note that the
        statement is a local one (we just have to find representing smooth
        curves), meaning that we may assume $G$ and $M$ to be diffeomorphic to
        open subsets $U\opn X$ and $V\opn Y$ for locally convex spaces $X$ and
        $Y$. Since $\alpha$ and $\beta$ are submersions we can also assume that
        $X=Z\times Y$, that $U=W\times V$, that $\alpha=\beta=\pr_{2}$ and that
        there are diffeomorphisms $\varphi\from W\times V\to W\times V$ and
        $\psi\from V\to V$ that make
        \begin{equation*}
         \vcenter{\xymatrix{
         G\ar[d]^{\alpha}\ar[r]^(.4){\cong} &  W\times V\ar[r]^{\varphi}\ar[d]^{\pr_{2}} & W\times V\ar[d]^{\pr_{2}}&G\ar[l]_(.4){\cong}\ar[d]^{\beta}\\
         M\ar[r]^{\cong} &  V\ar[r]^{\psi} & V & M\ar[l]_{\cong}
         }}
        \end{equation*}
        commute. In particular, we have
        $\varphi(w,v)=(\varphi_{1}(w,v),\psi(v))$. By composing $\varphi$ with
        the diffeomorphism $(x,y)\mapsto(x,\psi^{-1}(y))$ we may assume that
        $\psi$ is the identity. For the inner square we then have
        \begin{equation*}
         T(W\times V\times_{\pr_{2},\pr_{2}} W\times V)=\{((w,z,v,y),(w',z',v',y'))\in (W\times Z\times V\times Y)^{2}\mid v=v' \text{ and }y=y' \}.
        \end{equation*}
        Since $T\pr_{2}=\pr_{2}\times \pr_{2}$ one sees that for the inner
        square we also have
        \begin{equation*}
         T(W\times V)\times _{T\pr_{2},T\pr_{2}} T(W\times V)=
         \{((w,z,v,y),(w',z',v',y'))\in (W\times Z\times V\times Y)^{2}\mid v=v'\text{ and }y=y'\}.
        \end{equation*}
        This shows that both sides in \eqref{eqn1} are actually the same. We
        thus may set
        \begin{equation*}
         T \mu\from TG\times_{T \alpha,T \beta}TG\to G
        \end{equation*}
        with respect this identification. One can easily check that yields in
        fact a new Lie groupoid $T\cG$.
 \end{enumerate}
\end{remark}

\begin{tabsection}
 We now recall the construction of the Lie algebroid associated to a Lie
 groupoid.
\end{tabsection}

\begin{setup}
 \label{setup: alpha:sbd} We consider the subset
 $T^\alpha G = \bigcup_{g\in G} T_g \alpha^{-1} \alpha (g)$ of $TG$. 
 Note that  for all $x \in T^\alpha_g G$ the definition implies
 $T\alpha (x) = 0_{\alpha (g)} \in T_{\alpha (g)} M$, i.e.\ fibre-wise we have
 $T^\alpha_g G = \ker T_g\alpha$. Since $\alpha$ is a submersion, the
 same is true for $T\alpha$. Computing in submersion charts, the kernel of
 $T_g \alpha$ is a direct summand of the model space of $TG$. Furthermore, the
 submersion charts of $T \alpha$ yield submanifold charts for $T^\alpha G$
 whence $T^\alpha G$ becomes a split submanifold of $T G$. Restricting the
 projection of $TG$, we thus obtain a subbundle
 $\pi_\alpha \colon T^\alpha G \to G$ of the tangent bundle $TG$.
\end{setup}

\begin{setup}\label{setup:
 algebroid} We now recall the construction of the Lie algebroid $\Lf(\cG)$
 associated to a Lie groupoid $\cG$. The vector bundle underlying $\Lf(\cG)$ is
 the pullback $1^{*}T^{\alpha}G$ of the bundle $T^\alpha G$ via the embedding
 $1 \colon M \rightarrow G$. We denote this bundle also by $\Lf(G)\to M$. The
 anchor $a_{\Lf(\cG)} \colon \Lf(G) \rightarrow TM$ is the composite of the
 morphisms
 \begin{displaymath}
  \Lf(G) \rightarrow T^\alpha G \xrightarrow{\subseteq} TG \xrightarrow{T\beta}
  TM
 \end{displaymath}
 To describe the Lie bracket on $\Gamma (\Lf(G))$ we need some notation: Let
 $g$ be an element of $G$. We define the smooth map
 $R_g \colon \alpha^{-1}(\beta (g)) \rightarrow G$, $h \mapsto hg$. A vertical
 vector field $Y \in \Gamma (T^{\alpha}G)$ is called \emph{right-invariant} if
 for all $(h,g) \in G \times_{\alpha, \beta} G$ the equation
 $Y(hg) = T_h (R_g)(Y(h))$ holds. We denote the Lie subalgebra of all right
 invariant vector fields on $G$ by $\SectRI{G}$. Then \cite[Corollary
 3.5.4]{Mackenzie05General-theory-of-Lie-groupoids-and-Lie-algebroids} shows
 that the assignment
 \begin{equation}\label{eq: RI:VF}
  \Gamma(\Lf(\cG)) \rightarrow \SectRI{G},\quad X \mapsto \overrightarrow{X} , \quad \text{ with } \quad\overrightarrow{X} (g) = T(R_g) (X(\beta (g)))
 \end{equation}
 is an isomorphism of $C^\infty (G)$-modules. Its inverse is given by
 $\SectRI{G} \rightarrow \Gamma (\Lf(\cG))$, $X \mapsto X \circ 1$. Now we
 define the Lie bracket on $\Gamma (\Lf(\cG))$ via
 \begin{equation}\label{eq: LB}
  \LB[X,Y] \coloneq \LB[\overrightarrow{X},\overrightarrow{Y}] \circ 1.
 \end{equation}
 Then the \emph{Lie algebroid $\Lf (\cG)$ of $\cG$} is the vector bundle
 $\Lf(G) \rightarrow M$ together with the bracket $\LB$ from \eqref{eq: LB} and
 the anchor $a_{\Lf (\cG)}$.
\end{setup}

\begin{setup}
 To fully describe the Lie functor on Lie groupoids, suppose that
 $\cH=(H\toto M)$ is another Lie groupoid over M and that $f\from G\to H$ is a
 smooth functor satisfying $f \circ 1_{\cG}=1_{\cH}$. Then
 $Tf(T^{\alpha}G)\se T^{\alpha}H$ and from $f \circ 1_{\cG}=1_{\cH}$ it follows
 that $Tf$ induces a morphism
 $1_{\cG}^{*}T^{\alpha}G\to 1_{\cH}^{*}T^{\alpha}H$ of vector bundles. This
 morphism is in fact a morphism of Lie algebroids \cite[Proposition
 3.5.10]{Mackenzie05General-theory-of-Lie-groupoids-and-Lie-algebroids}, which
 we denote by $\Lf(f)\from \Lf(\cG)\to \Lf(\cH)$. In total, this defines the
 Lie functor
 \begin{equation*}
  \Lf\from \cat{LieGroupoids}_{M}\to \cat{LieAlgebroids}_{M}.
 \end{equation*}
\end{setup}

\begin{tabsection}
 We now turn to the Lie functor defined on the category of locally convex Lie
 groups (cf.\
 \cite{neeb2006,Milnor84Remarks-on-infinite-dimensional-Lie-groups}).
\end{tabsection}

\begin{setup}\label{setup:
 RI:VF} Let $H$ be a locally convex Lie group, i.e., a locally convex manifold
 which is a group such that the group operations are smooth. The Lie algebra
 $\Lf(H)$ of $H$ is the tangent space $T_1 H$ endowed with a suitable Lie
 bracket $\LB{}$ (cf.\ \cite[Definition II1.5]{neeb2006}, \cite[\S
 5]{Milnor84Remarks-on-infinite-dimensional-Lie-groups}). To obtain the
 bracket, we identify $T_1 H$ with the Lie algebra of left invariant vector
 fields $\vectL{H}$. Each element $X \in T_1 H$ extends to a (unique) left
 invariant vector field
 \begin{displaymath}
  X^\lambda \in \Gamma (TH)\quad \text{ via }\quad X^\lambda (h) = T_1 \lambda _h (X).
 \end{displaymath}
 Here $\lambda _h$ is the left translation in $H$ by the element $h$.
 Similarly, to $X$ there corresponds a unique right invariant vector field
 $X^{\rho}$. Since the bracket of left invariant vector fields is left
 invariant and the bracket of right invariant vector fields is right invariant
 there are now two ways of endowing $T_{1}H$ with a Lie bracket. The convention
 here is to define the bracket $T_{1}H$ via \emph{left} invariant vector field.
 Thus $X\mapsto X^{\lambda}$ becomes an isomorphism of Lie algebras and
 $X\mapsto X^{\rho}$ becomes an anti-isomorphism of Lie algebras, i.e., we have
 $-[X,Y]^{\rho}=[X^{\rho},Y^{\rho}]$ (\cite[Assertion
 5.6]{Milnor84Remarks-on-infinite-dimensional-Lie-groups}).
\end{setup}

\begin{setup}
 Suppose $H,H'$ are locally convex Lie groups and $f\from H\to H'$ is a smooth
 group homomorphism. Then $T_{1}f\from T_{1}H\to T_{1}H'$ is a continuous and
 linear map which preserves the Lie bracket. This defines the morphism
 $\Lf(f)\from \Lf(H)\to \Lf(H')$ of topological Lie algebras associated to $f$.
 In total, we obtain this way the Lie functor
 \begin{equation*}
  \Lf\from \cat{LieGroups}\to\cat{LieAlgebras}.
 \end{equation*}
\end{setup}

\begin{warning}
 Each Lie group $H$ gives rise to a Lie groupoid $(H\toto *)$ over the point
 $*$ and each Lie algebra $\fh$ gives rise to a Lie algebroid $\fh\to *$.
 However, with the above convention the Lie algebroid $\Lf(H)\to *$ is
 \emph{not} isomorphic to the Lie algebroid $\Lf(H\toto {*})$. It rather is
 anti isomorphic. This is an annoying but unavoidable fact if one wants to
 stick to the usual and natural conventions.
\end{warning}

\begin{tabsection}
 We will now line out one main example that will be developed throughout the text
 to illustrate our results.
\end{tabsection}

\begin{example}\label{exmp:gauge_groupoid}
 Let $\pi\from P\to M$ be a principal $H$-bundle. Then the gauge groupoid
 $\frac{P\times P}{H}\toto M$ is defined as follows. The manifold of objects is
 $M$ and the manifold of arrows is the quotient of $P\times P$ by the diagonal
 action of $H$. We denote by $\langle p,q\rangle$ the equivalence class of
 $(p,q)$ in $(P\times P)/H$.
 
 For later reference we recall the construction of charts for
 $\frac{P\times P}{H}$. In order to obtain manifold charts for $(P\times P)/H$,
 let $(U_{i})_{i\in I}$ be an open cover of $M$ such that there exist smooth
 local sections $\sigma_{i}\from U_{i}\to P$ of $\pi$. This yields an atlas
 $(U_i,\kappa_i)_{i \in I}$ of local trivialisations of the bundle
 $\pi \colon P \rightarrow M$ which are given by
 \begin{equation*}
  \kappa_i \colon \pi^{-1} (U_i) \rightarrow U_i \times H,\quad p \mapsto (\pi (p), \delta (\sigma_{i}(\pi(p)), p))
 \end{equation*}
 with $\delta \colon P \times_\pi P \rightarrow H$,
 $(p,q) \mapsto p^{-1}\cdot q$. Here we use $p^{-1}\cdot q$ as the suggestive
 notation for the element in $h\in H$ that satisfies $p.h=q$ (whereas $p^{-1}$
 alone has in general no meaning).
 
 The local trivialisations commute with the right $H$-action on $P$ since
 \begin{equation*}
  \kappa_i (p.h) = (\pi(p.h), \delta (\sigma_{i}(\pi(p.h)),p.h)=(\pi(p), \delta (\sigma_{i}(\pi(p)),p)\cdot h.
 \end{equation*}
 In particular, the trivialisations descent to manifold charts for the arrow
 manifold of the gauge groupoid:
 \begin{equation*}
  K_{ij} \colon \frac{\pi^{-1}(U_i) \times \pi^{-1}(U_j)}{H} \rightarrow U_i
  \times U_j \times H ,\quad \langle p_1,p_2\rangle \mapsto (\pi (p_1), \pi (p_2) ,
  \delta( \sigma_{i}(\pi( p_{1})),p_{1}) \delta (\sigma_{j}(\pi(p_{2})),p_{2})^{-1}).
 \end{equation*}
 One then easily checks that $\alpha( \langle p,q\rangle)\coloneq\pi(p)$,
 $\beta(\langle p,q\rangle)\coloneq \pi(q)$,
 $1(x)=\langle \sigma_{i}(x),\sigma_{i}(x\rangle)$ if $x\in U_{i}$ and
 \begin{equation*}
  m(\langle p,q\rangle ,\langle v,w\rangle)\coloneq\langle p,w\cdot \delta(v,q)\rangle
 \end{equation*}
 complete the definition of a Lie groupoid.
 
 The associated Lie algebroid is naturally isomorphic to the Atiyah algebroid
 $TP/H \to M$. If we identify sections of $TP/H\to M$ with right-invariant
 vector fields on $P$, then the bracket is the usual bracket of vector fields
 on $P$ and the anchor is induced by $T \pi$. To see that this is naturally
 isomorphic to $\Lf(\frac{P\times P}{H}\toto M)$ it is a little more convenient to
 identify $M$ with the submanifold $\frac {\Delta P}{H}$ via the diffeomorphism
 $\frac {\Delta P}{H}\cong \frac {P}{H}\cong M$. Here, $\Delta P$ denotes the
 diagonal $\Delta P\se P\times P$. Then we have
 $\alpha(\langle p,q\rangle)=\langle p,p\rangle$,
 $\beta(\langle p,q \rangle)=\langle q,q\rangle$. Consequently,
 $T^{\alpha}\frac{P\times P}{H}=\frac{0_{P}\times TP}{H}$ and thus we have the
 natural isomorphism
 \begin{equation*}
  \Lf\left(\frac{P\times P}{H}\right)=T^{\alpha}\left.\frac{P\times P}{H}\right|_{\frac{\Delta P}{H}}=\left. \frac{ 0_{P}\times TP }{H}\right|_{\frac{\Delta P}{H}}\cong TP/H
 \end{equation*}
 of vector bundles over $M$. One easily checks that this is in fact a morphism
 of Lie algebroids.
\end{example}

\section{The Lie group structure on the bisections}
\label{sec:the_lie_group_structure_on_the_bisections}

\begin{tabsection}
 It is the task of this section to lift the functor from \eqref{eqn:functor_1}
 to a functor that takes values in locally convex Lie groups. Our main
 technical tool for understanding the Lie group structure on $\Bis(\cG)$ will
 be local additions (cf.\ Definition \ref{def:local_addition}) which respect to
 the fibres of a submersion. This is an adaptation of the construction of
 manifold structures on mapping spaces
 \cite{Wockel13Infinite-dimensional-and-higher-structures-in-differential-geometry,conv1997,michor1980}
 (see also Appendix \ref{Appendix: MFD}). In particular, special cases of our
 constructions are covered by \cite[Chapter 10]{michor1980}, but we aim here at
 a greater generality. In the end, we will have to restrict the functor from
 \eqref{eqn:functor_1} to those Lie groupoids that admit such a local addition.
\end{tabsection}

\begin{definition}(cf.\
 \cite[10.6]{michor1980}) Let $s\from Q\to N$ be a surjective submersion. Then
 a \emph{local addition adapted to $s$} is a local addition
 $\A \from U\opn TQ\to Q$ such that the fibres of $s$ are additively closed
 with respect to $\A$, i.e.\ $\A (v_{q})\in s^{-1}(s(q))$ for all $q\in Q$ and
 $v_{q}\in T_{q}s^{-1}(s(q))$ (note that $s^{-1}(s(q))$ is a submanifold of
 $Q$).
\end{definition}

We will mostly be interested in local addition which respect the source
projection of a Lie groupoid.

\begin{lemma}\label{lemma:
 sourcetarget} If $\cG = (G \toto M)$ is a Lie groupoid with a local addition
 adapted to the source projection $\alpha$, then there exists a local addition
 adapted to the target projection $\beta$.
\end{lemma}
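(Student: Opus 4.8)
The plan is to transport the adapted local addition along the inversion map $\iota\from G\to G$, which is a diffeomorphism interchanging the roles of $\alpha$ and $\beta$. The key observation is that $\alpha\circ\iota=\beta$ and $\beta\circ\iota=\iota$, so $\iota$ carries $\alpha$-fibres to $\beta$-fibres: concretely $\iota(\alpha^{-1}(x))=\beta^{-1}(x)$ for every $x\in M$. Since $\iota$ is a diffeomorphism, its tangent map $T\iota\from TG\to TG$ is a diffeomorphism of tangent bundles, and it restricts to a bundle isomorphism $T^{\alpha}G\to T^{\beta}G$ by the chain rule. This is exactly the data needed to push forward a local addition adapted to $\alpha$ to one adapted to $\beta$.

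**Key steps.** First I would verify the fibre identities $\alpha\circ\iota=\beta$ and $\beta\circ\iota=\alpha$, which follow immediately from the groupoid axioms (inversion swaps source and target). Second, given the adapted local addition $\A\from U\opn TG\to G$ for $\alpha$, I would define the candidate $\A^{\beta}\from U'\opn TG\to G$ by conjugating with $\iota$, namely
\begin{equation*}
 \A^{\beta}(v) \coloneq \iota\bigl(\A(T\iota(v))\bigr), \quad v\in U'\coloneq (T\iota)^{-1}(U).
\end{equation*}
Third, I would check that $\A^{\beta}$ is again a local addition: the defining properties of a local addition (that it restricts on each fibre $T_{g}G$ to a diffeomorphism onto a neighbourhood of $g$ with differential at the zero section equal to the identity, and that $\A^{\beta}\circ 0 = \id_{G}$) are all preserved under conjugation by the diffeomorphism $\iota$ together with its tangent map $T\iota$, since these are functorial/natural properties. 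The precise conditions I need are whatever Definition \ref{def:local_addition} lists; each is checked by a short diagram-chase using that $\iota$ is a diffeomorphism with $\iota\circ\iota=\id_{G}$. Finally, I would verify the adaptedness to $\beta$: if $v\in T_{g}\beta^{-1}(\beta(g))$, then $T\iota(v)\in T_{\iota(g)}\alpha^{-1}(\alpha(\iota(g)))$ because $T\iota$ sends $T^{\beta}G$ into $T^{\alpha}G$; by the $\alpha$-adaptedness of $\A$ we get $\A(T\iota(v))\in\alpha^{-1}(\alpha(\iota(g)))=\alpha^{-1}(\beta(g))$, and applying $\iota$ sends this into $\beta^{-1}(\beta(g))$, as required.

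**Main obstacle.** None of these steps is deep; the only care needed is bookkeeping with the various fibre and tangent-fibre identities and confirming that $T\iota$ really does restrict to an isomorphism $T^{\alpha}G\to T^{\beta}G$ rather than merely mapping into the ambient $TG$. This restriction is where the submersion structure of $\alpha$ and $\beta$ enters: by Setup \ref{setup: alpha:sbd} both $T^{\alpha}G$ and $T^{\beta}G$ are split subbundles of $TG$, and $T\iota$ is a fibrewise linear isomorphism respecting the kernel decompositions $T^{\alpha}_{g}G=\ker T_{g}\alpha$ and $T^{\beta}_{g}G=\ker T_{g}\beta$ precisely because $\alpha\circ\iota=\beta$ gives $T\alpha\circ T\iota=T\beta$. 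I expect the whole proof to be short once this naturality is recorded.
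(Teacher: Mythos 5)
Your proposal is correct and follows essentially the same route as the paper: both conjugate the $\alpha$-adapted local addition by the inversion, setting $\A^{\op{op}}=\iota\circ\A\circ T\iota$ (your domain $(T\iota)^{-1}(U)$ coincides with the paper's $T\iota(U)$ since $T\iota$ is an involution), and both deduce adaptedness to $\beta$ from the identity $\alpha\circ\iota=\beta$ and the fact that $\iota$ carries $\alpha$-fibres to $\beta$-fibres. The only blemish is the typo in your plan paragraph, where ``$\beta\circ\iota=\iota$'' should read $\beta\circ\iota=\alpha$, as you correctly state in the key steps.
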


\begin{proof}
 Let $\A \colon U \opn TG \rightarrow G$ be a local addition adapted to
 $\alpha$. Recall that the inversion map $\iota \colon G \rightarrow G$ is a
 diffeomorphism. Hence the tangent $T\iota$ is a diffeomorphism mapping the
 zero-section in $TG$ to itself. In particular, $T\iota (U)$ is an open
 neighbourhood of the zero-section in $TG$. Define
 $\A^{\op{op}} \colon T\iota (U) \opn TG \rightarrow G$ via
 $\A^{\op{op}} = \iota \circ \A \circ T\iota$ and observe that
 $\A^{\op{op}} (0_g) = g$ holds for all $g \in G$. Now $\A$ being a local
 addition implies that
 $(\pi|_{T\iota (U)}, \A^{\op{op}}) \colon T\iota (U) \opn TG \rightarrow G \times G$
 induces a diffeomorphism onto an open neighbourhood of the diagonal, whence
 $\A^{\op{op}}$ is a local addition. To prove that $\A^{\op{op}}$ is
 adapted to $\beta$ we use that $\iota$ intertwines $\alpha$ and $\beta$, i.e.\
 $\beta = \alpha \circ \iota$. Thus $\iota$ maps each submanifold
 $\beta^{-1} (\beta (g))$ to $\alpha^{-1} (\beta (g))$ and thus
 $T\iota (T_g \beta^{-1} (\beta (g))) = T_{g^{-1}} \alpha^{-1} (\beta (g))$. As
 $\A$ is adapted to $\alpha$, we can deduce easily from these facts that
 $\A^{\op{op}}$ is adapted to $\beta$.
\end{proof}

\begin{remark}
 Exchanging the r\^{o}le of $\alpha$ and $\beta$ in the proof of Lemma
 \ref{lemma: sourcetarget}, we see that a local addition adapted to the source
 projection exists if and only if a local addition adapted to the target
 projection exists. 
\end{remark}

\begin{definition}
 We say that that a Lie groupoid $\cG=(G\toto M)$ admits an \emph{adapted local
 addition} if $G$ admits a local addition which is adapted to the source
 projection $\alpha$ (or, equivalently, to the target projection $\beta$). We
 denote the full subcategory of $\cat{LieGroupoids}_{M}$ of Lie groupoids over
 $M$ that admit an adapted local addition by $\cat{LieGroupoids}_{M}^{\A}$.
\end{definition}

\begin{tabsection}
 In the following, all manifolds of smooth mappings are endowed with the smooth
 compact-open topology from \ref{def:smooth_compact_open_topology} and the
 manifold structure from Theorem \ref{thm: MFDMAP}. If a Lie groupoid admits an
 adapted local addition, then we can prove the following result. Note that if
 the manifold $G \times G$ admits tubular neighbourhoods for embedded
 submanifolds, then the assertion of the next lemma is a special case of
 \cite[Proposition 10.8]{michor1980}.
\end{tabsection}

\begin{lemma}\label{lemma:
 composing:submanifold} Suppose $K$ is a compact manifold and
 $\cG = (G \toto M)$ is a Lie groupoid which admits an adapted local addition
 $\A$. Then $C^\infty (K,G \times_{\alpha,\beta} G)$ is a submanifold of
 $C^\infty (K,G \times G)$.
\end{lemma}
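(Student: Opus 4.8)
The plan is to show that $C^\infty(K, G\times_{\alpha,\beta} G)$ is a submanifold of $C^\infty(K, G\times G)$ by realizing it as the preimage of the diagonal-type condition cut out by the source and target maps. Concretely, a smooth map $f = (f_1, f_2)\from K \to G\times G$ lands in the fibre product if and only if $\alpha \circ f_1 = \beta \circ f_2$ as maps $K \to M$, i.e. if and only if the map $(\alpha\circ f_1, \beta\circ f_2)\from K \to M\times M$ has image in the diagonal $\Delta M \se M\times M$. Since $\alpha$ and $\beta$ are surjective submersions, postcomposition induces smooth maps $C^\infty(K,G\times G)\to C^\infty(K,M\times M)$, and I want to use that $C^\infty(K,\Delta M)$ is a submanifold of $C^\infty(K, M\times M)$ whose preimage is exactly our set.

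First I would reduce the statement to the diagonal. The diagonal $\Delta M$ is a closed embedded submanifold of $M\times M$; because we are working in the setting where mapping-space manifold structures are available (Theorem \ref{thm: MFDMAP}) and $K$ is compact, the subset $C^\infty(K, N') \se C^\infty(K, N)$ is a submanifold whenever $N'$ is a (suitably split) submanifold of $N$ admitting a local addition that restricts appropriately — this is the standard fact that submanifolds of the target induce submanifolds of the mapping space. The more robust route, avoiding the need for a local addition on $M\times M$ adapted to $\Delta M$, is to work directly with $G\times G$: I would exhibit a local addition on $G\times G$ with respect to which the fibre-product submanifold $G\times_{\alpha,\beta} G$ is additively closed, and then invoke the same mapping-space principle for the target $G\times G$ itself.

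This is where the adapted local addition $\A$ enters, and it is the key step. Using $\A$ (adapted to $\alpha$) and, via Lemma \ref{lemma: sourcetarget}, the induced addition $\A^{\op{op}}$ adapted to $\beta$, I would build the product local addition
\begin{equation*}
 \A\times\A^{\op{op}}\from (U\times T\iota(U)) \opn T(G\times G) \to G\times G,\quad (v,w)\mapsto (\A(v), \A^{\op{op}}(w)),
\end{equation*}
using the canonical identification $T(G\times G)\cong TG\times TG$. The point is that $G\times_{\alpha,\beta} G = \{(a,b)\mid \alpha(a)=\beta(b)\}$ is, by the submersion property, a split submanifold of $G\times G$ (as already noted in Setup and in Remark \ref{rem: tangentgpd}, equation \eqref{eqn1}), and its tangent space at $(a,b)$ consists of pairs $(v,w)$ with $T\alpha(v)=T\beta(w)$. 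I would check that this product addition is \emph{adapted to the fibres of the map} $(a,b)\mapsto(\alpha(a),\beta(b))$ in the sense required: for a tangent vector $(v_a, w_b)$ tangent to the fibre-product submanifold, additive closedness of $\A$ along $\alpha$-fibres and of $\A^{\op{op}}$ along $\beta$-fibres forces $\A(v_a)$ and $\A^{\op{op}}(w_b)$ to have matching $\alpha$- and $\beta$-values, so the sum stays in $G\times_{\alpha,\beta} G$.

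The main obstacle I anticipate is verifying this compatibility cleanly, namely that the product addition genuinely restricts to a local addition on the fibre-product submanifold rather than merely preserving some coarser condition. The subtlety is that the fibre-product is cut out by the equation $\alpha(a)=\beta(b)$ coupling the two factors, whereas $\A$ and $\A^{\op{op}}$ act factor-wise; I must confirm that a tangent vector to the submanifold, when exponentiated by the product addition, tracks the coupled constraint at all times, which reduces to the identities $\alpha(\A(v_a)) = \alpha(q)$ (from $\A$ being adapted to $\alpha$) and $\beta(\A^{\op{op}}(w_b)) = \beta(q')$ (from $\A^{\op{op}}$ being adapted to $\beta$). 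Granting this, the fibres match and the restriction of $\A\times\A^{\op{op}}$ gives a local addition on $G\times_{\alpha,\beta}G$. Once the submanifold $G\times_{\alpha,\beta}G$ is equipped with a local addition compatible with that of $G\times G$, the conclusion that $C^\infty(K, G\times_{\alpha,\beta}G)$ is a submanifold of $C^\infty(K, G\times G)$ follows from the general theory of manifolds of mappings for compact $K$ (Theorem \ref{thm: MFDMAP} and Appendix \ref{Appendix: MFD}), applied exactly as in \cite[Proposition 10.8]{michor1980}.
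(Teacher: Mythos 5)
Your route is the paper's own: obtain $\A^{\op{op}}$ from Lemma \ref{lemma: sourcetarget}, form the product local addition $\A\times\A^{\op{op}}$ on $G\times G$, argue that $G\times_{\alpha,\beta}G$ is additively closed for it, and then pass to the mapping spaces. The gap sits exactly in the step you single out as the main obstacle: your claimed reduction to the identities $\alpha(\A(v_a))=\alpha(a)$ and $\beta(\A^{\op{op}}(w_b))=\beta(b)$ is not valid. Adaptedness supplies those identities only for vectors tangent to the fibres, i.e.\ for $v_a\in\ker T_a\alpha$ and $w_b\in\ker T_b\beta$, whereas $T_{(a,b)}(G\times_{\alpha,\beta}G)$ consists of \emph{all} pairs with $T\alpha(v_a)=T\beta(w_b)$, a common value that need not vanish; for such pairs the definition of an adapted local addition says nothing about $\alpha(\A(v_a))$ or $\beta(\A^{\op{op}}(w_b))$. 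The failure is not merely expository. On the pair groupoid $\R^2\toto\R$ with $\alpha(x,y)=y$, $\beta(x,y)=x$, $\iota(x,y)=(y,x)$, the map
\begin{equation*}
 \A\bigl((x,y),(v_1,v_2)\bigr)=\bigl(x+v_1,\;y+v_2(1+v_1)\bigr),\qquad v_1>-1,
\end{equation*}
is a local addition adapted to $\alpha$, and one computes $\A^{\op{op}}\bigl((x,y),(w_1,w_2)\bigr)=\bigl(x+w_1(1+w_2),\;y+w_2\bigr)$. At $a=b=(0,0)$ the pair $v_a=(0,1)$, $w_b=(1,1)$ satisfies $T\alpha(v_a)=1=T\beta(w_b)$, hence is tangent to the fibre product, yet $\alpha(\A(v_a))=1\neq 2=\beta(\A^{\op{op}}(w_b))$, so $(\A\times\A^{\op{op}})(v_a,w_b)\notin G\times_{\alpha,\beta}G$: the fibre product is \emph{not} additively closed for this choice.

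In fairness, the paper's proof asserts the very same additive closedness with no further justification, so you have reconstructed the intended argument together with its weak point; but as a self-contained proof your step fails, and it cannot be repaired from adaptedness alone. What does suffice is a stronger compatibility of the form ``$\alpha\circ\A$ factors through $T\alpha$'' (so that $\alpha(\A(v_a))$ depends only on $T\alpha(v_a)$, and correspondingly $\beta(\A^{\op{op}}(w_b))$ only on $T\beta(w_b)$); then $T\alpha(v_a)=T\beta(w_b)$ forces matching images for \emph{all} tangent vectors, and this is the kind of property the paper's explicit constructions are engineered to have (e.g.\ $\alpha\circ\A_{\Gau}=\A_M\circ T\alpha$ in the charts of the gauge-groupoid construction). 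A second, smaller, discrepancy: you finish by invoking \cite[Proposition 10.8]{michor1980} ``applied exactly as'' there, but the paper points out immediately before the lemma that this proposition presupposes tubular neighbourhoods in $G\times G$, which are not available here. Once a local addition restricting to the fibre product is in hand, what remains is the paper's explicit chart computation: $\varphi_g$ identifies $O_g\cap C^\infty(K,G\times_{\alpha,\beta}G)$ with the trace of $\Gamma(g^*T(G\times_{\alpha,\beta}G))$ in $\varphi_g(O_g)$, and one must additionally verify that $\Gamma(g^*T(G\times_{\alpha,\beta}G))$ is a \emph{closed} subspace of $\Gamma(g^*T(G\times G))$ (the paper does this via point evaluations) so that $\varphi_g$ genuinely is a submanifold chart in the locally convex setting; your outline omits this.
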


\begin{proof}
 Since $\alpha$ is a submersion the fibre product $G \times_{\alpha, \beta} G$
 is a split submanifold of $G \times G$. By Lemma \ref{lemma: sourcetarget} $G$
 admits a local addition $\A$ adapted to $\alpha$ and a local addition
 $\A^{\op{op}}$ adapted to $\beta$. We will identify
 $T(G\times _{\alpha,\beta}G)$ with $TG\times_{T \alpha,T \beta}TG$ as in
 Remark \ref{rem:structure_on_bisections} \ref{rem: tangentgpd}. Then we obtain
 a local addition $\A^{\op{prod}} \coloneq \A \times \A^{\op{op}}$ on
 $T(G \times G)$. Since $\A$ is adapted to $\alpha$ and $\A^{\text{op}}$ is
 adapted to $\beta$, the local addition $\A^{\op{prod}}$ restricts to a local
 addition on the submanifold $G \times_{\alpha, \beta} G$ (i.e.\ the
 submanifold is additively closed).
 
 Now let
 $g \in C^\infty (K, G \times_{\alpha,\beta} G) \subseteq C^\infty (K,G \times G)$.
 We consider the chart $(O_g, \varphi_g)$ for $g$ on $C^\infty (K, G \times G)$
 which is induced by $\A^{\op{prod}}$. As $G \times_{\alpha,\beta} G$ is
 additively closed with respect to $\A^{\op{prod}}$ we derive the condition:
 \begin{displaymath}
  f \in C^\infty (K, G \times_{\alpha, \beta} G) \cap O_g \Leftrightarrow
  \varphi_g (f) = (\pi_{T(G \times G)}, \A^{\op{prod}})^{-1} (g,f) \in \Gamma
  (g^* T(G \times_{\alpha ,\beta}G))
 \end{displaymath}
 Notice that the linear subspace $\Gamma (g^* T(G \times_{\alpha ,\beta}G))$ is
 closed. This follows from the continuity of the point evaluations
 $\op{ev}_x \colon \Gamma (g^* T(G \times G)) \rightarrow T(G\times G), f \mapsto f(x)$
 (cf.\ \cite[Proposition 3.20]{alas2012}): As the vector subspace
 $T_y(G \times_{\alpha ,\beta} G)$ is complemented (thus closed) for all
 $y \in G \times_{\alpha, \beta} G$, we can write
 $\Gamma (g^* T(G \times_{\alpha ,\beta}G))$ as an intersection of closed
 subspaces
 $\bigcap_{x \in K} ((\pi_{TG \times G}^*g) \circ \op{ev}_x)^{-1} (T_{g(x)} G \times_{\alpha ,\beta} G)$.
 Thus the canonical chart restricts to a submanifold chart
 $\varphi_g \colon O_g \cap C^\infty (K , G \times_{\alpha,\beta} G) \rightarrow \Gamma (g^* T(G \times_{\alpha ,\beta}G)) \subseteq \Gamma (g^* T(G \times G))$.
\end{proof}

\begin{tabsection}
 We now use adapted local additions to endow the sections of a submersion with
 a smooth manifold structure.
\end{tabsection}

\begin{proposition}\label{Proposition:
 SectMFD} Let $N$ be a locally convex and locally metrisable manifold and $K$
 be a compact manifold. Furthermore, let $s \colon N \rightarrow K$ be a
 submersion. If there exists a local addition
 $\A \colon U \opn TN \rightarrow N$ adapted to $s$, then the set
 \begin{displaymath}
  \Gamma(K \xleftarrow{s} N) \coloneq \{ \sigma \in C^\infty (K,N) \mid s \circ
  \sigma = \id_K \}
 \end{displaymath}
 is a submanifold of $C^\infty (K,N)$. Furthermore, the model space of an open
 neighbourhood of $\sigma\in \Gamma (K \xleftarrow{s} N)$ is the closed
 subspace
 \begin{equation*}
  E_\sigma \coloneq   \{\gamma\in \Gamma (\sigma^* TN)\mid \forall x \in K, \  \gamma(x)\in T_{\sigma(x)}s^{-1}(x) \}
 \end{equation*}
 of all vertical sections in $\Gamma (\sigma^* TN)$.
\end{proposition}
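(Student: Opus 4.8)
The plan is to exhibit, around each $\sigma\in\Gamma(K\xleftarrow{s}N)$, the canonical manifold chart of $C^\infty(K,N)$ induced by the adapted local addition $\A$ and to verify that it is in fact a \emph{submanifold} chart for the section space, with the space $E_\sigma$ of vertical sections playing the role of the model subspace. Recall (cf.\ the chart used in the proof of Lemma \ref{lemma: composing:submanifold}) that the chart $(O_\sigma,\varphi_\sigma)$ is given by $\varphi_\sigma(f)=(\pi_{TN},\A)^{-1}\circ(\sigma,f)$; concretely, $\gamma\coloneq\varphi_\sigma(f)\in\Gamma(\sigma^*TN)$ is the unique section with $\pi_{TN}(\gamma(x))=\sigma(x)$ and $\A(\gamma(x))=f(x)$ for all $x\in K$, and $\varphi_\sigma(O_\sigma)$ is open in $\Gamma(\sigma^*TN)$. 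It then suffices to show that $\varphi_\sigma$ carries $\Gamma(K\xleftarrow{s}N)\cap O_\sigma$ onto $E_\sigma\cap\varphi_\sigma(O_\sigma)$ and that $E_\sigma$ is a closed, complemented subspace.

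The heart of the argument is a pointwise equivalence coming from adaptedness: for $v\in U$ with $\pi_{TN}(v)\in s^{-1}(x)$, one has $\A(v)\in s^{-1}(x)$ if and only if $v\in T_{\pi_{TN}(v)}s^{-1}(x)$. The implication ``$\Leftarrow$'' is exactly the hypothesis that $\A$ is adapted to $s$. For ``$\Rightarrow$'', I would use that $\A$ restricts to a local addition on the submanifold $s^{-1}(x)$, so that $(\pi_{TN},\A)$ maps $U\cap Ts^{-1}(x)$ diffeomorphically onto a neighbourhood of the diagonal in $s^{-1}(x)\times s^{-1}(x)$; given $\A(v)\in s^{-1}(x)$, the pair $(\pi_{TN}(v),\A(v))$ lies in this neighbourhood and hence has a preimage $v'\in U\cap Ts^{-1}(x)$, and injectivity of $(\pi_{TN},\A)$ on all of $U$ forces $v=v'$, so $v$ is vertical. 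Applying this with $v=\gamma(x)$ (so $\pi_{TN}(\gamma(x))=\sigma(x)\in s^{-1}(x)$ and $\A(\gamma(x))=f(x)$) gives, for each $x$, that $s(f(x))=x$ is equivalent to $\gamma(x)\in T_{\sigma(x)}s^{-1}(x)$. Thus $f\in\Gamma(K\xleftarrow{s}N)\cap O_\sigma$ if and only if $\varphi_\sigma(f)\in E_\sigma$, which is the desired restriction of the chart.

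It remains to check that $E_\sigma$ is a closed, complemented subspace of $\Gamma(\sigma^*TN)$. Closedness follows exactly as in Lemma \ref{lemma: composing:submanifold}: writing $E_\sigma=\bigcap_{x\in K}\op{ev}_x^{-1}\bigl(T_{\sigma(x)}s^{-1}(x)\bigr)$ and using continuity of the point evaluations $\op{ev}_x\from\Gamma(\sigma^*TN)\to T_{\sigma(x)}N$ together with the fact that each vertical space $T_{\sigma(x)}s^{-1}(x)=\ker T_{\sigma(x)}s$ is complemented, hence closed. For complementedness at the level of sections, I would argue that, since $s$ is a submersion, the vertical bundle $T^{s}N\coloneq\ker Ts$ is a split subbundle of $TN$ (by the same submersion-chart computation as in Setup \ref{setup: alpha:sbd} for $T^{\alpha}G$); pulling back along $\sigma$ yields a split subbundle $\sigma^*T^{s}N\se\sigma^*TN$ whose complement bundle induces a topological direct-sum decomposition $\Gamma(\sigma^*TN)=E_\sigma\oplus\Gamma(\text{complement})$. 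This makes $\varphi_\sigma$ a submanifold chart and establishes the claim.

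The main obstacle I expect is the ``$\Rightarrow$'' direction of the pointwise equivalence: one must guarantee that whenever $\A(\gamma(x))$ lands in the fibre $s^{-1}(x)$ the vector $\gamma(x)$ is genuinely vertical, which requires that $(\pi_{TN}(\gamma(x)),\A(\gamma(x)))$ lie in the range where the \emph{fibrewise} local addition is a diffeomorphism onto a neighbourhood of the diagonal. This is handled by shrinking $O_\sigma$ so that all the involved vectors $\gamma(x)$ are small enough, using compactness of $K$; everything else is routine bookkeeping with the mapping-space charts.
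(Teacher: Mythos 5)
Your strategy coincides with the paper's: endow $C^\infty(K,N)$ with the structure coming from the adapted local addition, show that the canonical charts $(O_\sigma,\varphi_\sigma)$ are submanifold charts with model subspace $E_\sigma$, and obtain closedness of $E_\sigma$ from continuity of the point evaluations (this last part is literally the paper's argument). The genuine gap is in your pointwise equivalence, namely the direction that $\A(v)\in s^{-1}(x)$ forces $v$ to be vertical. This does \emph{not} follow from the definition of an adapted local addition (which only gives the converse), and as stated for all of $U$ it is false: take $N=S^1\times\R$, $K=S^1$, $s=\pr_1$, let $U\se TN\cong N\times\R^2$ have fibres $B_\epsilon(0,0)\cup B_\epsilon(1,0)$ for small $\epsilon>0$, and set $\A((\theta,y),(u,w))=(\theta+u,\,y+w)$ on the first ball and $\A((\theta,y),(u,w))=(\theta+u-1,\,y+w+5)$ on the second. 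One checks that this is a local addition adapted to $s$ (the only vertical vectors contained in $U$ lie in the first ball), yet the non-vertical vector $(1,0)$ at $(\theta,y)$ is mapped into $s^{-1}(\theta)$; consequently, for $\sigma(\theta)=(\theta,0)$ the section $g(\theta)=(\theta,5)$ lies in $O_\sigma\cap\Gamma(K\xleftarrow{s}N)$ while $\varphi_\sigma(g)$ is nowhere vertical, so the canonical chart taken at face value is not a submanifold chart (the proposition itself survives: one must shrink the chart). Your argument for ``$\Rightarrow$'' breaks exactly at the point you flag --- $(\pi_{TN}(v),\A(v))$ need not lie in the image of the fibrewise local addition, as this example shows --- so the proposed repair (shrink $O_\sigma$ so that all $\gamma(x)$ are small, using compactness of $K$) is the actual mathematical content of the step, and it is not carried out. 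What must be proved is that non-vertical vectors sufficiently close to the zero section cannot land in the fibre of their base point; on Banach manifolds this follows from an inverse-function-theorem argument (the derivative of $s\circ\A|_{T_qN\cap U}$ at $0_q$ is $T_qs$), but in the locally convex generality of the proposition no such theorem is available, so this is anything but ``routine bookkeeping''. (To be fair, the paper's proof deduces the displayed equivalence in one line from adaptedness and thus elides the same point; but a self-contained proof has to close it.)

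A secondary issue: you claim more than the proposition asserts, and the extra claim is also not proved. The statement only requires $E_\sigma$ to be a \emph{closed} model subspace; the paper's notion of submanifold chart here does not demand a complement, and the remark following Proposition \ref{Proposition: SectMFD} explicitly reserves splitness for special situations. Your phrase ``whose complement bundle induces a topological direct-sum decomposition'' presupposes that $\sigma^*(\ker Ts)$ admits a complementary subbundle inside $\sigma^*TN$; the fibrewise complements supplied by submersion charts need not patch together into a global subbundle, and producing one requires an additional construction (for instance, patching the local fibrewise projections by a smooth partition of unity on the compact, necessarily finite-dimensional, base $K$). That construction can be supplied, but as written this step is an assertion rather than a proof --- and it is not needed for the statement at hand.
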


\begin{proof}
 Endow $C^\infty (K,N)$ with the manifold structure from Theorem \ref{thm:
 MFDMAP}.\ref{thm:manifold_structure_on_smooth_mapping_b} constructed with
 respect to the local addition $\A$ that is adapted to $s$. We claim that for a
 section $\sigma$ of the submersion $s$ the canonical charts
 $(\varphi_\sigma, O_\sigma)$ of $C^\infty (K,N)$ from \ref{thm:
 MFDMAP}.\ref{thm:manifold_structure_on_smooth_mapping_a} define submanifold
 charts for $\Gamma (K \xleftarrow{s} N)$.
 To see this, consider $g \in O_\sigma$ and recall
 $\varphi_\sigma (g) = (\pi_{TN}, \A)^{-1} \circ (\sigma , g)$. Since the local
 addition $\A$ is adapted to $s$, the formula for $\varphi_\sigma$ shows that
 \begin{equation*}
  g \in \Gamma (K \xleftarrow{s} N) \cap O_\sigma \Leftrightarrow
  \varphi_\sigma (g) \in E_{\sigma}\cap C^{\infty}(K, U),
 \end{equation*}
 where $U\opn TN$ is as is Theorem \ref{thm: MFDMAP}. For $x \in K$ we define
 the evaluation map
 $\op{ev}_x \colon \Gamma (\sigma^* TN) \rightarrow TN, f \mapsto f(x)$. It
 is easy to see that the evaluation maps are continuous, since they are
 continuous in each chart (cf.\ \cite[Proposition 3.20]{alas2012}). The vector
 subspace $E_\sigma \subseteq \Gamma (\sigma^* TN)$ is thus closed as an
 intersection of closed subspaces
 $E_\sigma = \bigcap_{x \in K} ((\pi_{TN}^*\sigma) \circ \op{ev}_x)^{-1} (T_{f(x)} s^{-1} (x))$.
 In particular, $\varphi_{\sigma}$ is a submanifold chart and the assertion
 follows.
\end{proof}

\begin{remark}
 In certain cases the submanifold $\Gamma (K \xleftarrow{s} N)$ constructed in
 Proposition \ref{Proposition: SectMFD} will be a split submanifold of
 $C^\infty (K,N)$. For example this will happen if $N$ is a finite dimensional
 manifold (see \cite[Proposition 10.10]{michor1980}). The same proof carries
 over to the following slightly more general setting: If $N$ is a manifold such
 that for each embedded submanifold $Y \subseteq N$ there exists a tubular
 neighbourhood, then $\Gamma (K \xleftarrow{s} N)$ is a split submanifold of
 $C^\infty (K,N)$.
\end{remark}

Using this manifold structure we can finally prove the first main result of this
paper.

\begin{Theorem}\label{theorem:
 A} Suppose $M$ is compact and $\cG=(G  \toto M)$ is a locally convex and
 locally metrisable Lie groupoid over $M$ which admits an adapted local
 addition. Then $\Bis (\cG)$ is a submanifold of $C^{\infty}(M,G)$ (with the
 manifold structure from Theorem \ref{thm: MFDMAP}). Moreover, the induced
 manifold structure and the group multiplication
 \begin{equation*}
  (\sigma \star \tau ) (x) \coloneq \sigma ((\beta \circ \tau)(x))\tau(x)\text{ for }  x \in M
 \end{equation*}
 turn $\Bis(\cG)$ into a Lie group modelled on
 \begin{equation*}
  E_1 \coloneq   \{\gamma\in C^{\infty}(M,TG) \mid \forall x \in M, \  \gamma(x)\in T_{1_{x}}s^{-1}(x) \}.
 \end{equation*}
\end{Theorem}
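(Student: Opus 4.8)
The plan is to combine the section‑manifold structure of Proposition~\ref{Proposition: SectMFD} with a verification that the group operations \eqref{eq: BISGP1} and \eqref{eq: BISGP2} are smooth. First I would observe that a bisection is a section $\sigma\from M\to G$ of $\alpha$ satisfying the additional open condition that $\beta\circ\sigma$ is a diffeomorphism. Applying Proposition~\ref{Proposition: SectMFD} to the submersion $\alpha\from G\to M$ (which admits an adapted local addition by hypothesis) shows that $\Gamma(M\xleftarrow{\alpha}G)$ is a submanifold of $C^\infty(M,G)$ modelled on the closed subspace $E_1$ of vertical sections. It then remains to see that $\Bis(\cG)$ is an \emph{open} subset of $\Gamma(M\xleftarrow{\alpha}G)$: since $M$ is compact, the subset of sections $\sigma$ for which $\beta\circ\sigma$ is a diffeomorphism is open in the smooth compact–open topology (the map $\sigma\mapsto\beta\circ\sigma$ into $C^\infty(M,M)$ is smooth, and the diffeomorphisms of a compact manifold form an open subset of $C^\infty(M,M)$). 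Hence $\Bis(\cG)$ is an open submanifold of $\Gamma(M\xleftarrow{\alpha}G)$, establishing the submanifold claim and identifying the model space as $E_1$.

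For the Lie group structure I would first reduce the smoothness of multiplication to smoothness of maps between mapping spaces. The multiplication \eqref{eq: BISGP1} can be decomposed as follows: the assignment $(\sigma,\tau)\mapsto(\sigma\circ(\beta\circ\tau),\tau)$ lands in $C^\infty(M,G\times_{\alpha,\beta}G)$, which by Lemma~\ref{lemma: composing:submanifold} is a submanifold of $C^\infty(M,G\times G)$, and then one postcomposes with the smooth multiplication $m\from G\times_{\alpha,\beta}G\to G$. The key smoothness inputs here are the standard facts (from the manifold‑of‑mappings theory recalled in the appendix) that postcomposition with a smooth map and the composition map $C^\infty(M,G)\times\Diff(M)\to C^\infty(M,G)$ are smooth. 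I would therefore write $\sigma\star\tau = m_*\big(\sigma\circ(\beta\circ\tau),\,\tau\big)$ and check each factor is smooth: the map $\tau\mapsto\beta\circ\tau$ is smooth, the composition $(\sigma,\phi)\mapsto\sigma\circ\phi$ on $C^\infty(M,G)\times\Diff(M)$ is smooth, and $m_*$ is smooth as postcomposition by the smooth groupoid multiplication.

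For the inversion \eqref{eq: BISGP2}, the main subtlety is the factor $(\beta\circ\sigma)^{-1}$, i.e.\ the inversion map on $\Diff(M)$. I would invoke that $\Diff(M)$ is a Lie group for compact $M$, so group inversion there is smooth, and that the induced map $\Bis(\cG)\to\Diff(M)$, $\sigma\mapsto\beta\circ\sigma$, is smooth; then $\sigma^{-1}=\iota\circ\sigma\circ(\beta\circ\sigma)^{-1}$ is a composite of smooth maps ($\iota_*$, composition, and $\Diff$‑inversion). This is the step I expect to be the main obstacle, because it requires the smoothness of the inversion on $\Diff(M)$ together with the smoothness of the composition map with a \emph{varying} diffeomorphism in the second argument; both are nontrivial facts about mapping manifolds, and care is needed to confirm they apply in the locally convex, locally metrisable setting over compact $M$ rather than only in the finite‑dimensional case.

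Finally, having shown that $\Bis(\cG)$ is a manifold on which multiplication and inversion are smooth, I would conclude that it is a locally convex Lie group modelled on $E_1$, noting that $E_1$ is metrisable because it is a closed subspace of the model space of $C^\infty(M,G)$, which is metrisable by the local metrisability of $G$ and compactness of $M$. It is worth remarking that the model space $E_1$ of vertical sections along the unit section $1\from M\to G$ is precisely the space of sections $\Gamma(\Lf(\cG))$ of the associated Lie algebroid, which foreshadows the identification of the Lie algebra carried out in the following section.
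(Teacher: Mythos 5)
Your proposal is correct and follows essentially the same route as the paper's own proof: Proposition~\ref{Proposition: SectMFD} for the section manifold, openness of $\Bis(\cG)=\beta_*^{-1}(\Diff(M))$ via the openness of $\Diff(M)$ in $C^\infty(M,M)$, smoothness of multiplication through the factorisation $\sigma\star\tau=m_*\bigl(\sigma\circ\beta\circ\tau,\tau\bigr)$ using Lemma~\ref{lemma: composing:submanifold} and Theorem~\ref{thm: MFDMAP}, and smoothness of inversion via $\iota_*$, the composition map, and inversion in the Lie group $\Diff(M)$. The only detail the paper makes explicit that you elide is that the identification $C^\infty(M,G)^2\cong C^\infty(M,G\times G)$ is a diffeomorphism (the argument of \cite[Proposition 10.5]{michor1980} carried over to the locally convex setting), which is needed before one can speak of the pair map landing smoothly in the submanifold $C^\infty(M,G\times_{\alpha,\beta}G)$.
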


\begin{tabsection}
 Note that $E_{1}$ is in fact isomorphic to the space of sections of the Lie
 algebroid $\Lf(\cG)$ associated to $\cG$. It will be the content of Section
 \ref{sec:the_lie_algebra_of_the_group_of_bisections} to analyse this
 isomorphism and show that it is natural and respects the Lie bracket (up to a
 sign).
\end{tabsection}

\begin{proof}[of Theorem \ref{theorem: A}] 
In Proposition \ref{Proposition: SectMFD} we endowed
 the space of sections $\Gamma (M \xleftarrow{\alpha} G)$ with the structure of
 a submanifold of $C^\infty (M,G)$. Observe that by Theorem \ref{thm: MFDMAP}
 \ref{thm:manifold_structure_on_smooth_mapping_h} the map
 \begin{displaymath}
  C^\infty (M,G) \rightarrow C^{\infty} (M,M),\quad f \mapsto \beta \circ f
 \end{displaymath}
 is smooth, and so is its restriction $\beta_*$ to
 $\Gamma (M \xleftarrow{\alpha}   G)$. Recall from \cite[Corollary
 5.7]{michor1980}
 that the subset of all diffeomorphisms $\Diff (M) \subseteq C^\infty (M,M)$ is
 open. By construction $\Bis (\cG) = (\beta_*)^{-1} (\Diff (M))$ is thus an
 open submanifold of $\Gamma (M \xleftarrow{\alpha} G)$, and thus also a
 submanifold of $C^{\infty}(M,G)$.
 
 As $\Gamma (M \xleftarrow{\alpha} G)$ is locally metrisable by Proposition
 \ref{Proposition: SectMFD}, so is $\Bis (\cG)$. Thus all we have to show is
 that the group operations \eqref{eq: BISGP1} and \eqref{eq: BISGP2} of the
 group of bisections are smooth with respect to the submanifold structure. We
 begin with the group multiplication.
 
 By Theorem \ref{thm: MFDMAP}
 \ref{thm:manifold_structure_on_smooth_mapping_h} the maps
 $\beta_* \colon \Bis (\cG) \rightarrow \Diff (M)$ and
 $m_* \colon C^{\infty} (M, G \times_{\alpha, \beta} G) \rightarrow C^\infty (M,G)$
 are smooth. Furthermore, by Theorem \ref{thm: MFDMAP}
 \ref{thm:manifold_structure_on_smooth_mapping_e} the composition
 $\op{Comp} \colon C^{\infty} (M,G) \times C^{\infty} (M,M) \rightarrow C^{\infty} (M,G)$
 is smooth. Hence for $\sigma, \tau \in \Bis (\cG)$ the map
 \begin{displaymath}
  \mu \colon \Bis (\cG)^2 \rightarrow C^\infty (M, G)^2, (\sigma , \tau)
  \mapsto (\sigma \circ \beta \circ \tau ,\tau) = (\op{Comp} (\sigma , \beta_* (\tau)),
  \tau)
 \end{displaymath}
 is smooth. Let $\Delta \colon M \rightarrow M \times M$ be the diagonal map.
 The canonical identification
 \begin{displaymath}
  h \colon C^\infty (M, G)^2 \rightarrow C^\infty (M,G \times G) , \quad (f , g)
  \mapsto (f, g) \circ \Delta.
 \end{displaymath}
 is a diffeomorphism by an argument analogous to \cite[Proposition
 10.5]{michor1980}. Indeed, the proof carries over verbatim to our setting of
 infinite dimensional locally convex manifolds since $G$ admits a local
 addition. Observe that the map $h\circ \mu$ takes its image in the submanifold
 $C^\infty (M, G \times_{\alpha,\beta} G)$ (cf.\ Lemma \ref{lemma:
 composing:submanifold}). Hence we can rewrite the multiplication formula
 \eqref{eq: BISGP1} for $\sigma, \tau \in \Bis (\cG)$ as a composition of
 smooth maps:
 \begin{displaymath}
  \sigma \star \tau = (\sigma \circ \beta \circ \tau) \cdot \tau = m_* \circ h
  \circ \mu (\sigma , \tau).
 \end{displaymath}
 In conclusion the group multiplication is smooth with respect to the manifold
 structure on $\Bis (\cG)$. 
 
 We are left to prove that inversion in
 $\Bis (\cG)$ is smooth. To this end let us recall the formula \eqref{eq:
 BISGP2} for the inverse of $\sigma \in \Bis (\cG)$:
 \begin{displaymath}
   \sigma^{-1} = (\sigma \circ (\beta \circ \sigma)^{-1})^{-1} = \iota_* \op{Comp}(\sigma , (\beta_* (\sigma))^{-1})
 \end{displaymath}
 Here $\iota \colon G \rightarrow G$ is the inversion in the groupoid, whence
 $\iota_* \colon C^{\infty} (M , G ) \rightarrow C^\infty (M,G)$ is smooth by
 Theorem \ref{thm: MFDMAP} \ref{thm:manifold_structure_on_smooth_mapping_h}.
 Furthermore, $\beta_*$ maps $\Bis (\cG)$ into the open submanifold
 $\Diff (M)$. Inversion of $\beta_* (\sigma)$ in \eqref{eq: BISGP2} is thus
 inversion in the group $\Diff (M)$. The group $\Diff (M)$ is a Lie group with
 respect to the open submanifold structure induced by $C^\infty (M,M)$ (see
 \cite[Theorem 11.11]{michor1980}). We conclude from \eqref{eq: BISGP2} that
 inversion in the group $\Bis (\cG)$ is smooth and thus $\Bis (\cG)$ is a Lie
 group.
\end{proof}

\begin{proposition}
 Suppose $\cG=(G \toto M)$ and $\cH=(H \toto M)$ are Lie groupoids over the
 compact manifold $M$ and that $\cG$ and $\cH$ admit an adapted local addition.
 If then $f\from \cG\to \cH$ is a morphism of Lie groupoids over $M$, then
 \begin{equation*}
  \Bis(f)\from \Bis(\cG)\to \Bis(\cH),\quad \sigma \mapsto f \circ \sigma
 \end{equation*}
 is a smooth morphism of Lie groups.
\end{proposition}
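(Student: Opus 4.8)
The plan is to show that $\Bis(f)$ is both a group homomorphism and a smooth map, and to obtain smoothness by realizing $\Bis(f)$ as the restriction of the pushforward $f_*$ on mapping spaces, which is already known to be smooth.

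First I would verify that $\Bis(f)$ is well-defined and a homomorphism. Since $f\from\cG\to\cH$ is a morphism of Lie groupoids over $M$, it intertwines the source and target projections, i.e.\ $\alpha_{\cH}\circ f=\alpha_{\cG}$ and $\beta_{\cH}\circ f=\beta_{\cG}$ (a functor over $M$ commutes with $\alpha,\beta$). Hence for $\sigma\in\Bis(\cG)$ the composite $f\circ\sigma$ is a section of $\alpha_{\cH}$, and $\beta_{\cH}\circ(f\circ\sigma)=\beta_{\cG}\circ\sigma$ is a diffeomorphism of $M$, so $f\circ\sigma\in\Bis(\cH)$. That $\Bis(f)$ respects the product \eqref{eq: BISGP1} is the functoriality computation already recorded in Remark \ref{rem:structure_on_bisections}: since $f$ is a functor, $f(\sigma(\beta(\tau(x)))\,\tau(x))=f(\sigma(\beta(\tau(x))))\,f(\tau(x))$, and $\beta_{\cH}\circ(f\circ\tau)=\beta_{\cG}\circ\tau$, so $\Bis(f)(\sigma\star\tau)=\Bis(f)(\sigma)\star\Bis(f)(\tau)$. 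Thus $\Bis(f)$ is a group homomorphism.

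For smoothness, the key observation is that $\Bis(f)(\sigma)=f\circ\sigma=f_*(\sigma)$, where $f_*\colon C^\infty(M,G)\to C^\infty(M,H)$ is the pushforward by the smooth map $f$. By Theorem \ref{thm: MFDMAP}\ref{thm:manifold_structure_on_smooth_mapping_h} the map $f_*$ is smooth. By Theorem \ref{theorem: A}, both $\Bis(\cG)$ and $\Bis(\cH)$ are submanifolds of $C^\infty(M,G)$ and $C^\infty(M,H)$ respectively. Since $f_*$ maps $\Bis(\cG)$ into $\Bis(\cH)$ (by the first paragraph), and $\Bis(\cH)$ is a submanifold, smoothness of $\Bis(f)=f_*|_{\Bis(\cG)}$ follows from the general principle that a smooth map into an ambient manifold whose image lies in a submanifold is smooth as a map into that submanifold. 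Concretely, one composes $f_*$ with the inclusion and uses the submanifold charts from Proposition \ref{Proposition: SectMFD}.

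I expect the only genuine point requiring care to be this last corestriction step: smoothness of a map into a submanifold does not follow automatically from smoothness into the ambient space for arbitrary (non-split) submanifolds. However, the submanifold charts constructed in Proposition \ref{Proposition: SectMFD} are obtained by restricting the canonical charts of $C^\infty(M,H)$, so in these charts $f_*$ is literally the restriction of a smooth map between the model spaces, and its image lands in the closed model subspace $E_{f\circ\sigma}$; hence the local representative is smooth as a map into the submanifold model. Therefore $\Bis(f)$ is smooth, and combined with the homomorphism property it is a morphism of Lie groups, completing the proof.
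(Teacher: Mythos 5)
Your proof is correct and takes essentially the same route as the paper: both realize $\Bis(f)$ as the restriction of the pushforward $f_{*}\from C^{\infty}(M,G)\to C^{\infty}(M,H)$, which is smooth by Theorem \ref{thm: MFDMAP} \ref{thm:manifold_structure_on_smooth_mapping_h}, and note that the homomorphism property is immediate from functoriality of $f$. You are in fact more careful than the paper, which stops after restricting to the submanifold $\Bis(\cG)$; your chart-level corestriction argument into $\Bis(\cH)$, using that the submanifold charts of Proposition \ref{Proposition: SectMFD} are restrictions of the canonical charts with values in the closed subspace $E_{f\circ\sigma}$, correctly fills in the step the paper leaves implicit.
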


\begin{proof}
 The map $f_{*}\from C^{\infty}(M,G)\to C^{\infty}(M,H)$,
 $\gamma\mapsto f \circ \gamma$ is smooth by Theorem \ref{thm: MFDMAP}
 \ref{thm:manifold_structure_on_smooth_mapping_h}. Thus its restriction to the
 submanifold $\Gamma(M \xleftarrow{\alpha} G)$ and in there to the open subset
 $\Bis(\cG)$ is smooth. That it is a group homomorphism follows directly from
 the definition.
\end{proof}

\begin{remark}\label{rem:functorial_interpretation}
 Suppose $M$ is a compact manifold. Then we consider the full subcategory
 $\cat{LieGroupoids}_{M}^{\A}$ of $\cat{LieGroupoids}_{M}$ whose objects
 are locally convex and locally metrisable Lie groupoids with object space $M$
 that admit an adapted local addition. Then the results of this section show
 that $\Bis$ may be regarded as a functor
 \begin{equation*}
  \Bis\from \cat{LieGroupoids}_{M}^{\A}\to \cat{LieGroups},
 \end{equation*}
 where $\cat{LieGroups}$ denotes the category of locally convex Lie groups.
\end{remark}

\begin{proposition}
 Under the assumptions from Theorem \ref{theorem: A}, the natural action
 \begin{equation*}
  \gamma \colon \Bis (\cG) \times G \rightarrow G ,\quad (\psi , g) \mapsto
  \psi (\beta (g)) g.
 \end{equation*}
 is smooth, as well as the restriction of this action to the $\alpha$-fibre
 \begin{equation*}
  \gamma_{m} \colon \Bis (\cG) \times \alpha^{-1}(m) \rightarrow \alpha^{-1}(m) ,\quad (\psi , g) \mapsto
  \psi (\beta (g)) g
 \end{equation*}
 for each $m\in M$.
\end{proposition}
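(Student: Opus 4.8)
The plan is to write $\gamma$ as a composite of smooth maps, all of which are available either from the groupoid structure maps or from the manifold-of-mappings theory, and then to obtain the fibrewise version $\gamma_m$ by restriction and corestriction. First I would rewrite the action in terms of the groupoid multiplication: for $(\psi,g)\in \Bis(\cG)\times G$ one has $\gamma(\psi,g)=m(\psi(\beta(g)),g)$. This is legitimate because $\psi$ is a section of $\alpha$, so $\alpha(\psi(\beta(g)))=\beta(g)$ and hence the pair $(\psi(\beta(g)),g)$ lies in the fibre product $G\times_{\alpha,\beta}G$.

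Second, I would establish smoothness of the map $(\psi,g)\mapsto \psi(\beta(g))$. Since the target projection $\beta\from G\to M$ is smooth, so is $\id\times\beta\from \Bis(\cG)\times G\to \Bis(\cG)\times M$; composing with the evaluation map $\ev\from C^\infty(M,G)\times M\to G$, which is smooth by the manifold-of-mappings theory (Theorem \ref{thm: MFDMAP}) and restricts to $\Bis(\cG)$ since the latter is a submanifold of $C^\infty(M,G)$ by Theorem \ref{theorem: A}, yields the claim. Pairing this with the projection $\Bis(\cG)\times G\to G$ onto the second factor gives a smooth map $\Phi\from \Bis(\cG)\times G\to G\times G$, $(\psi,g)\mapsto(\psi(\beta(g)),g)$, whose image lies in the fibre product by the first step.

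As $\alpha$ is a submersion, $G\times_{\alpha,\beta}G$ is a split submanifold of $G\times G$, so $\Phi$ corestricts to a smooth map into $G\times_{\alpha,\beta}G$; since the groupoid multiplication $m$ is smooth, $\gamma=m\circ\Phi$ is smooth. For $\gamma_m$ I would use the groupoid identity $\alpha(m(a,b))=\alpha(b)$ to compute $\alpha(\gamma(\psi,g))=\alpha(g)$, which shows that $\gamma$ maps the submanifold $\Bis(\cG)\times\alpha^{-1}(m)$ into the split submanifold $\alpha^{-1}(m)$ of $G$ (a fibre of the submersion $\alpha$). Restricting the domain and corestricting the codomain of the smooth map $\gamma$ then gives smoothness of $\gamma_m$.

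The steps requiring the most care are the smoothness of the evaluation map and the two corestrictions to split submanifolds. Both ultimately rest on the fact that fibres of a submersion, and fibre products over a submersion, are \emph{split} submanifolds: this is precisely what makes the corestriction of a smooth map with image in the subset into a smooth map into the submanifold legitimate, and it is where I would be most explicit in the final write-up.
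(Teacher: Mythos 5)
Your proof is correct and follows essentially the same route as the paper: both write $\gamma$ as the composition $m\circ(\ev\circ(\id\times\beta),\pr_2)$ using smoothness of the evaluation map from Theorem \ref{thm: MFDMAP}, and both obtain $\gamma_m$ from the identity $\alpha(\psi(\beta(g))g)=\alpha(g)$ together with the fact that $\alpha^{-1}(m)$ is a submanifold. The only difference is that you make explicit the corestriction into the split submanifold $G\times_{\alpha,\beta}G$, a step the paper's proof leaves implicit in the phrase ``composition of smooth maps.''
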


\begin{proof}
 The action $\gamma$ is given as the composition
 $\gamma (\psi , g) = m (\ev (\psi , \beta (g)) , g)$ for
 $(\psi , g) \in \Bis (\cG) \times G$. Here
 $\ev \colon \Bis (\cG) \times M \rightarrow G$ is the canonical evaluation
 map, which is smooth by \ref{thm: MFDMAP}
 \ref{thm:manifold_structure_on_smooth_mapping_d}. Thus the action $\gamma$ is
 smooth as a composition of smooth maps.
 
 From $\alpha(\psi (\beta (g)) g)=\alpha(g)$ it follows that the action
 $\gamma$ preserves the $\alpha$-fibres. Since $\alpha$ is a submersion,
 $\alpha^{-1}(m)$ is a submanifold. Consequently, the action restricted to
 $\alpha^{-1}(m)$ is also smooth.
\end{proof}

\begin{tabsection}
 We will now give examples for Lie groupoids which admit an adapted local
 addition. Hence the following classes of Lie groupoids we can apply the
 previous results of this section to.
\end{tabsection}

\begin{proposition}\label{prop: ex:locadd}
 Suppose $\cG = (G \toto M)$ is a Lie groupoid such that $G,M$ are Banach
 manifolds and $M$ admits smooth partitions of unity. Then $\cG$ admits an
 adapted local addition.
\end{proposition}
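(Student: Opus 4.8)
The plan is to construct an adapted local addition on $G$ by building a spray (or equivalently a Riemannian-type metric) on $G$ whose geodesic exponential map respects the $\alpha$-fibres. The standard way to get any local addition on a manifold is to take a spray and use its exponential map; the point here is that we need the spray to be tangent to the $\alpha$-fibres so that the resulting exponential restricts to a local addition on each $\alpha^{-1}(x)$. Since $G$ and $M$ are Banach manifolds and $M$ admits smooth partitions of unity, the natural strategy is to work locally in submersion charts for $\alpha$, build a spray adapted to the vertical foliation there, and then glue with a partition of unity.

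\emph{First} I would pass to submersion charts for $\alpha$. Since $\alpha$ is a surjective submersion between Banach manifolds, around each point $g\in G$ there is a chart in which $\alpha$ looks like a projection $W\times V\to V$ onto a factor (as used in Remark \ref{rem:structure_on_bisections} \ref{rem: tangentgpd}). In such a chart the $\alpha$-fibres are precisely the slices $W\times\{v\}$, and the obvious local addition $(w,v;\,w',v')\mapsto (w+w',\,v+v')$ (using the local linear structure of the chart) is manifestly adapted to $\alpha$: a tangent vector lying in $T_g\alpha^{-1}(\alpha(g))$ has vanishing $V$-component, so its sum stays in the same slice. This gives, on each chart domain $O_i$, a locally defined spray $S_i$ (the flat spray of the chart) whose geodesics stay in $\alpha$-fibres whenever their initial velocity is vertical.

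\emph{Next} comes the gluing. The obstacle is that a convex combination of local additions is not a priori a local addition, so I would instead glue at the level of \emph{sprays}, where the affine structure behaves well: a convex combination $\sum_i f_i\, S_i$ of sprays (with $(f_i)$ a smooth partition of unity subordinate to the cover $(O_i)$ on $M$, pulled back via $\alpha$ to a partition of unity on $G$) is again a spray on $G$. I must check two things. First, that $\sum_i f_i S_i$ is still adapted in the sense that its geodesics with vertical initial velocity remain in the $\alpha$-fibre; this holds because each $S_i$ is tangent to the foliation by $\alpha$-fibres, the property of "being tangent to the fibres" is a linear (convex) condition on the spray, and the weights $f_i$ are constant along each fibre (being pulled back from $M$). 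Second, that the partition of unity on $M$ indeed pulls back to a smooth, locally finite partition of unity on $G$, which is where the hypothesis that $M$ admits smooth partitions of unity is used.

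\emph{Finally}, from the glued spray $S$ on $G$ I would take its exponential map $\A\coloneq \op{Exp}_S$, defined on a neighbourhood $U$ of the zero-section in $TG$; this is a local addition by the standard theory of sprays on Banach manifolds (cf.\ the references in Appendix \ref{Appendix: MFD}). By the adaptedness of $S$ to the foliation, for $v_g\in T_g\alpha^{-1}(\alpha(g))$ the geodesic $t\mapsto \op{Exp}_S(t v_g)$ stays in $\alpha^{-1}(\alpha(g))$, so $\A(v_g)\in\alpha^{-1}(\alpha(g))$, and moreover $\A$ restricts to a local addition on each fibre. \emph{The hard part} is the gluing step: one must verify carefully that the convex combination of sprays retains both the spray property and the fibre-tangency, and that shrinking $U$ if necessary makes $(\pi,\A)$ a diffeomorphism onto a neighbourhood of the diagonal in the Banach setting. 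These are the routine but essential checks that the Banach hypothesis and the smooth partitions of unity on $M$ are engineered to supply.
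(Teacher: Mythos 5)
Your reduction of the problem to producing a spray on $G$ that is tangent to the $\alpha$-fibres, followed by taking its exponential map, matches the paper's overall strategy, and your observation that fibre-tangency of the spray forces adaptedness of the resulting local addition is sound. The fatal step is the gluing. Your flat sprays $S_i$ live on a cover of $G$ by submersion chart domains $O_i$, so forming $\sum_i f_i S_i$ requires a smooth partition of unity on $G$ subordinate to $(O_i)_i$. The hypothesis supplies smooth partitions of unity only on $M$. Pulling back a partition of unity $(\lambda_i)_i$ on $M$ along $\alpha$ does give a smooth partition of unity on $G$, but its members have the fibre-saturated supports $\alpha^{-1}(\supp\lambda_i)$, so it is subordinate to the cover by cylinders $\alpha^{-1}(U_i)$ and never to a cover by submersion charts: a submersion chart $O_i\cong W_i\times V_i$ is small in the fibre direction as well, so it does not contain the cylinder $\alpha^{-1}(V_i)$ (already for the pair groupoid $M\times M\toto M$ no chart $U\times V$ contains $\alpha^{-1}(V)=M\times V$). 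Nor can you instead take the pieces to be the cylinders themselves, since a cylinder is not a chart domain: its $\alpha$-fibres are whole Banach manifolds, and constructing an adapted spray on it is the original problem again. What your argument actually needs is smooth partitions of unity on $G$ itself, i.e. smooth bump functions on the model spaces of the $\alpha$-fibres; this is a strictly stronger hypothesis than the stated one and genuinely fails in the Banach setting (Banach spaces such as $C([0,1])$ admit no Fr\'echet-smooth bump function, yet groupoids whose fibres are modelled on such spaces are allowed by the proposition).

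The paper's proof uses the partition-of-unity hypothesis exactly once, and only on $M$: by Lang's theorem, $M$ carries a spray $F$. It then extends $F$ to all of $G$ \emph{algebraically} rather than analytically, namely by right translation in the tangent groupoid $T\cG=(TG\toto TM)$ of Remark \ref{rem:structure_on_bisections} \ref{rem: tangentgpd}, setting $\overrightarrow{F}(v)=TR_v\bigl(F(T\beta(v))\bigr)$, and verifies that $\overrightarrow{F}$ is again a spray whose integral curves project under $T\alpha$ to integral curves of the zero vector field, so that its exponential map is a local addition adapted to $\alpha$. In other words, the groupoid multiplication plays the role that your bump functions cannot: it transports the spray data concentrated along the units (where the hypothesis on $M$ is available) into every $\alpha$-fibre at once, with no gluing on $G$ required. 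This right-translation step is the idea missing from your proposal, and without it, or some substitute that manufactures the fibre-direction data from structures on $M$ alone, the proof does not go through.
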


\begin{tabsection}
 The statement suggest similarities to \cite[Proposition
 3.2]{Rybicki02A-Lie-group-structure-on-strict-groups}. However, the difference
 between \cite[Proposition
 3.2]{Rybicki02A-Lie-group-structure-on-strict-groups} and the previous
 proposition is that we will construct an adapted local addition on all of $G$,
 whereas in \cite[Proposition
 3.2]{Rybicki02A-Lie-group-structure-on-strict-groups} a local addition is only
 constructed on a neighbourhood of $M$ in $G$. This allows us to view
 $\Bis(\cG)$ as a \emph{submanifold} of $C^{\infty}(M,G)$ and to use the known
 results on mapping spaces, rather than constructing an auxiliary manifold
 structure on $\Bis(\cG)$.
\end{tabsection}

\begin{proof}[of
 Proposition \ref{prop: ex:locadd}] We first recall some concepts from \cite[\S
 IV.3]{Lang95Differential-and-Riemannian-manifolds}. Suppose $X$ is an
 arbitrary Banach manifold and write $\pi_{TX}\from TX\to X$ for the projection
 of the tangent bundle. Then a vector field $TX\to T(TX)$ on $TX$ is said to be
 of \emph{second order} if
 \begin{equation}\label{eqn:second_order}
  T(\pi_{TX}) (F(v))=v
 \end{equation}
 holds for all $v\in TX$. For each $s\in \R$ we denote by
 $s_{TX}\from TX\to TX$ the vector bundle morphism which is given in each fibre
 by multiplication with $s$. With this notion fixed we define a second order
 vector field $F\from TX\to T(TX)$ to be a \emph{spray} if
 \begin{equation}\label{eqn:spray}
  F(s\cdot v)=T( s_{TX})(s\cdot F(v))
 \end{equation}
 holds for all $s\in\R$ and all $v\in TX$. For each $v\in TX$ there exists a
 unique integral curve $\beta_{v}$ of $F$ with initial condition $v$. The
 subset $W\se TX$ such that $\beta_{v}$ is defined on $[0,1]$ is an open
 neighbourhood of the zero section in $TX$. Then the \emph{exponential map} of
 $F$ is defined to be
 \begin{equation*}
  \exp_{F}\from W\to X,\quad v\mapsto \pi_{TX}( \beta_{v}(1)).
 \end{equation*}
 The restriction of $\exp_{F}$ to $T_{x}X$ is for each $x\in X$ a local
 diffeomorphism at $0_{x}$. By the Inverse Function Theorem there exists
 $U_{x}\opn TX\cap W$ with $0_{x}\in U_{x}$ such that
 $\left.(\pi\times \exp_{F})\right|_{U_{x}}$ is a diffeomorphism onto its
 image. Consequently, we obtain a local addition
 \begin{displaymath}
  \A_{F}\from U\coloneq\cup_{x\in X}U_{x}\to X,\quad v\mapsto \exp_{F}(v).
 \end{displaymath}
 
 Thus sprays are the key to constructing local additions on Banach manifolds.
 We now assume that this manifold is the manifold of arrows of the Lie groupoid
 $(G\toto M)$. For the rest of this proof we identify $M$ with the submanifold
 $1(M)$ of $G$ and $TM$ with the submanifold $T1(TM)$ of $TG$. There exists a
 spray $F\from TM\to T(TM)$ on $M$ by \cite[Theorem
 IV.3.1]{Lang95Differential-and-Riemannian-manifolds}. With respect to the
 mentioned identifications, we may interpret $F$ as a section of
 $\left.T^{T \alpha}TG\right|_{TM}\to TM$ that satisfies
 \eqref{eqn:second_order} and \eqref{eqn:spray} for all $v\in TM$ and all
 $s\in \R$.
 
 We now want to extend this spray by right translation. To this end, recall
 from Remark \ref{rem:structure_on_bisections} \ref{rem: tangentgpd} that
 $T\cG=(TG\toto TM)$ is also a Lie groupoid, where we take the tangent maps at
 all levels.\bigskip
 
 \textbf{Claim: The vertical right-invariant extension
 $\overrightarrow{F}\from TG\to T^{T \alpha}TG$ of $F$ is a spray satisfying
 $\overrightarrow{F}(v)\in T^{T \alpha}TG$ for all $v\in TG$.}
 
 We first establish \eqref{eqn:second_order} for all $v\in TG$. To this end we
 compute
 \begin{align*}
  T(\pi_{TG})(\overrightarrow{F}(v))&=T(\pi_{TG})(TR_{v}(F(T \beta(v))))=
  T(\pi_{TG}\circ Tm)(F(T \beta(v)),0_{v})\\
  &=T(m \circ \pi_{TG\times TG})(F(T \beta(v)),0_{v})=Tm(T \pi_{TG}(F(T \beta(v))),v)=Tm(T \beta(v),v)=v,
 \end{align*}
 where we have used $T \pi_{TG}(F(T \beta(v)))=T \beta(v)$ since
 $T \beta(v)\in TM$, that $T \beta(v)$ is an identity in $(TG\toto TM)$ and
 that
 \begin{equation*}
  \xymatrix{TG\times_{T \alpha,T \beta}TG \ar[r]^(.65){Tm} \ar[d]_{\pi_{TG\times TG}} & TG \ar[d]^{\pi_{TG}}\\ 
  G\times_{\alpha,\beta}\ar[r]^{m}G & G
  }
 \end{equation*}
 commutes. That \eqref{eqn:spray} holds for all $s\in \R$ and all $v\in TG$
 follows from the linearity of the tangent maps on each tangent space and the
 equality $R_{sv}\circ s_{TG} =  s_{TG} \circ R_{v}$, which imply
 \begin{align*}
  \overrightarrow{F}(s\cdot v)&=TR_{sv}(F(T \beta(s\cdot v)))=TR_{sv}(F(s\cdot T \beta(v)))=
  TR_{sv}(Ts_{TG}(s\cdot F(T \beta(v))))\\
  &=Ts_{TG}(TR_{v}(s\cdot F(T \beta(v))))=Ts_{TG}(s\cdot TR_{v}(F(T \beta(v))))=
  Ts_{TG}(s\cdot \overrightarrow{F}(v)).
 \end{align*}
 
 \textbf{Claim: The local addition $\A_{\overrightarrow{F}}$ constructed from
 $\overrightarrow{F}$ is adapted to the source projection $\alpha$.}
 
 Suppose $\beta_{v}\from [0,1]\to TG$ is an integral curve for
 $\overrightarrow{F}$. By definition we have
 \begin{equation*}
  \alpha(\A_{\overrightarrow{F}}(v))=\alpha(\pi_{TG}(\beta_{v}(1)))=\pi_{TM}(T \alpha(\beta_{v}(1))).
 \end{equation*}
 Thus it suffices by \cite[Proposition
 2.11]{Lang95Differential-and-Riemannian-manifolds} to check that
 $T \alpha \circ \beta_{v}\from [0,1]\to TM$ is an integral curve for the zero
 vector field. The latter is in fact the case since we have
 \begin{equation*}
  \left(T \alpha \circ \beta _{v} \right)'(t)=T\left(T \alpha (\beta_{v}'(t)) \right)
  =T\left( T \alpha (\underbrace{\overrightarrow{F}(\beta_{v}(t))}_{\in \ker T_{\beta_{v}(t)}T \alpha}) \right)=0.
 \end{equation*}
 
 Putting these two proven claims together establishes the proof of the
 proposition.
\end{proof}

\begin{remark}
 Let $\cG=(G\toto M)$ be a locally trivial Lie groupoid, i.e., one for which
 $(\beta,\alpha)\from G \to M\times M$ is a surjective submersion. Then $\cG$
 is equivalent over $M$ to the gauge groupoid of a principal bundle (the
 argument from \cite[\S
 1.3]{Mackenzie05General-theory-of-Lie-groupoids-and-Lie-algebroids} carries
 verbatim over to our more general setting). Thus the following proposition
 implies that each locally trivial Lie groupoid with locally exponential vertex
 group and finite-dimensional space of objects admits a local addition.
\end{remark}

\begin{proposition}
 Let $\pi\from P\to M$ be a principal $H$-bundle with finite-dimensional base
 $M$ and locally exponential structure group $H$. Then the associated gauge
 groupoid $\frac{P\times P}{H}\toto M$ admits an adapted local addition.
\end{proposition}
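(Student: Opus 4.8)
The plan is to construct the adapted local addition directly on the arrow manifold $G=\frac{P\times P}{H}$, working in the charts $K_{ij}$ from Example~\ref{exmp:gauge_groupoid} and gluing the resulting local pieces by a partition of unity, using $\exp_H$ to carry out the gluing inside the fibres. The reason for working in the $K_{ij}$ is that there $\alpha$ and $\beta$ become the base projections $\pr_{1},\pr_{2}\from U_i\times U_j\times H\to U_i,U_j$, so that adaptedness to $\alpha$ is transparent; indeed the chart domains are exactly $O_{ij}\coloneq\alpha^{-1}(U_i)\cap\beta^{-1}(U_j)=\frac{\pi^{-1}(U_i)\times\pi^{-1}(U_j)}{H}$.

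First I fix two building blocks. Since $M$ is finite dimensional it carries a spray and hence a local addition $\A_M\from U_M\opn TM\to M$ (Theorem~IV.3.1 of \cite{Lang95Differential-and-Riemannian-manifolds}, as already used in Proposition~\ref{prop: ex:locadd}). Since $H$ is locally exponential, $\exp_{H}$ restricts to a diffeomorphism from a zero-neighbourhood in $\fh$ onto a neighbourhood of the identity, and $\A_H(w_g)\coloneq \exp_{H}(TR_{g^{-1}}(w_g))\cdot g$ is a right-invariant local addition on $H$. In each chart I take the product local addition $\A^{ij}\coloneq \A_M\times\A_M\times\A_H$ on $O_{ij}\cong U_i\times U_j\times H$. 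Being a product and $\alpha$ being $\pr_{1}$, every $\A^{ij}$ is automatically adapted to $\alpha$; the crucial extra point is that $\alpha(\A^{ij}(v))=\A_M(T\alpha(v))$ and $\beta(\A^{ij}(v))=\A_M(T\beta(v))$, so the $\alpha$- and $\beta$-images of $\A^{ij}(v)$ do not depend on $(i,j)$.

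This last observation drives the gluing. For $v\in TG$ close to the zero section, all values $\A^{ij}(v)$ (for the finitely many charts containing $\pi_{TG}(v)$) lie in the single set $\alpha^{-1}(y_1)\cap\beta^{-1}(y_2)$ with $y_1=\A_M(T\alpha(v))$ and $y_2=\A_M(T\beta(v))$, which for the gauge groupoid is a right $H$-torsor, and they are mutually close. I choose a partition of unity $(\rho_i)_{i\in I}$ on $M$ subordinate to a locally finite trivialising cover $(U_i)_{i\in I}$ and set $\lambda_{ij}\coloneq(\rho_i\circ\alpha)\cdot(\rho_j\circ\beta)$; these are smooth, supported in $O_{ij}$ and satisfy $\sum_{ij}\lambda_{ij}=1$. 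On such a torsor $T$, $\exp_H$ provides an \emph{exponential barycentre}: for clustered points $q_{ij}\in T$ with weights $\lambda_{ij}$, the equation $\sum_{ij}\lambda_{ij}\log_{H}(q_{ij}^{-1}b)=0$, formed with the intrinsic torsor division $q^{-1}b\in H$, has by the inverse function theorem a unique solution $b$ near the cluster, depending smoothly on the data. I let $\A_G(v)$ be this barycentre of the $\A^{ij}(v)$ with weights $\lambda_{ij}(\pi_{TG}(v))$, on a sufficiently small neighbourhood $U\opn TG$ of the zero section. On the zero section all $\A^{ij}$ agree with the identity, so $\A_G(0_g)=g$, and its fibre derivative along the zero section is the weighted average of those of the $\A^{ij}$, hence the identity; thus $(\pi_{TG},\A_G)$ is a diffeomorphism onto a neighbourhood of the diagonal and $\A_G$ is a local addition. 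If $T\alpha(v)=0$ then $y_1=\alpha(\pi_{TG}(v))$, so the whole torsor $T$, and with it $\A_G(v)$, lies in the $\alpha$-fibre through $\pi_{TG}(v)$; hence $\A_G$ restricts to a local addition on each $\alpha$-fibre and is the desired adapted local addition.

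The main obstacle is exactly this gluing step. One cannot build a single exponential on $P$ by transporting $\exp_H$ along a connection, because solving the horizontal-lift equation is the evolution map of $H$ and would require $H$ to be \emph{regular}, a strictly stronger hypothesis than locally exponential. The exponential-barycentre construction circumvents this: it only ever evaluates $\exp_{H}$ and its local inverse near the identity, never an ODE on $H$, and it is defined intrinsically on the torsors $\alpha^{-1}(y_1)\cap\beta^{-1}(y_2)$, so that no choice of trivialisation survives in the final formula. The remaining points (local finiteness near a given point, smoothness of the barycentre via the inverse function theorem, and the fibre-derivative computation) are routine.
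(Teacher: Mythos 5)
Your construction has a genuine gap: it rests, twice, on the inverse function theorem in a setting where no inverse function theorem is available. The hypothesis is only that $H$ is \emph{locally exponential}, so $H$ (and hence the arrow manifold $\frac{P\times P}{H}$) may be modelled on an arbitrary locally convex space, e.g.\ a Fr\'echet space such as $L(H)=C^\infty(S^1,\su(2))$ for a loop group. In such spaces a smooth map whose derivative at a point is the identity need not be a local diffeomorphism, and a nonlinear equation with invertible linearisation need not be solvable. You invoke exactly this (a) to get existence, uniqueness and smooth dependence of the exponential barycentre $b$ solving $\sum_{ij}\lambda_{ij}\log_H(q_{ij}^{-1}b)=0$, and (b) to conclude from ``the fibre derivative along the zero section is the identity'' that $(\pi_{TG},\A_G)$ is a diffeomorphism onto a neighbourhood of the diagonal. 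Both steps are fine if $H$ is a Banach--Lie group, but then you have proved a strictly weaker statement than the proposition; for general locally exponential $H$ neither step can be justified, and there is no evident way to write down the barycentre or the inverse of $(\pi_{TG},\A_G)$ explicitly. (Your preliminary observations are sound: the chart-wise product additions $\A^{ij}$ do all land in the common set $\alpha^{-1}(y_1)\cap\beta^{-1}(y_2)$, and the barycentre equation, if solvable, is conjugation-invariant and hence trivialisation-independent.)

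This is precisely the difficulty the paper's proof is engineered to avoid: instead of averaging the chart-wise definitions, it makes them agree \emph{exactly} on overlaps, so that no implicit equation ever has to be solved and all inverses are explicit. The two ingredients are: (i) the local addition $\A_H(h.V)=h\exp_H(V)$ is both left- and right-invariant (right-invariance coming from the $\Ad$-equivariance of the exponential chart), and (ii) the tangent-level trivialisations are modified by a connection (replacing $T\sigma_i$ by its horizontal projection $\pi^{\mathfrak{H}}\circ T\sigma_i$), which forces the transition functions of these trivialisations to act on the $TH$-factor by left/right translation by the \emph{constants} $k_{mi}(x)$, $k_{nj}(y)$ rather than by genuine tangent vectors. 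Bi-invariance of $\A_H$ then gives $\A_{ij}=\A_{mn}$ on overlaps, and the glued map is a local addition because in each chart it is a product of maps with explicitly given inverses. Also note that your stated objection to connection-based gluing (that it would require solving horizontal-lift ODEs and hence regularity of $H$) does not apply here: the connection is used only as a pointwise vector-bundle projection, never to integrate anything. If you want to salvage your averaging idea, you would have to restrict to Banach $H$, at which point the paper's Proposition \ref{prop: ex:locadd} already covers the case anyway.
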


\begin{proof}
 We begin with the construction of a local addition for the Lie group $H$. As
 $H$ is locally exponential, the exponential map
 $\exp_H \colon L(H) \rightarrow H$ restricts to a diffeomorphism on a
 zero-neighbourhood. Fix a convex zero-neighbourhood $W \opn L(H) =T_e H$ (the
 tangent space at the identity $e \in H$) together with an
 identity-neighbourhood $V \opn H$ such that
 $\psi \coloneq (\exp_H|_W^V)^{-1} \colon V \rightarrow W$ becomes a manifold
 chart for $H$. By construction this chart satisfies the following properties:
 \begin{enumerate}
  \item $\psi (e) = 0$
  \item If $x \in V$ and for $k \in H$ the product $kxk^{-1}$ is also contained
        in $V$, then $\psi (kxk^{-1}) = \op{Ad} (k) (\psi (x))$ holds. Here
        $\op{Ad} \colon H \rightarrow \op{Aut} (L(H))$ is the adjoint
        representation of $H$ on its Lie-algebra.
 \end{enumerate}
 Let $m_H$ be the group multiplication in $H$ and
 $\lambda_h = m_H (h, \mathinner\cdot)$, $\rho_{h}=m_H(\mathinner\cdot,h)$ for
 $h \in H$. The tangent bundle $TH$ of the Lie group $H$ admits the following
 trivialisation
 $\Phi \colon H \times T_e H \rightarrow TH,  (h, V) \mapsto h.V = 0_h \cdot V = T \lambda_h (V)$.
 Hence $\tilde{W} \coloneq \Phi (H\times W)$ is an open neighbourhood of the
 zero-section in $TH$. We define a smooth map
 \begin{displaymath}
  \A_H \colon \tilde{W} \rightarrow H ,\quad \A_H(h.V) \coloneq h\cdot
  \exp_{H}(V),
 \end{displaymath}
 which obviously satisfies $\A_H (0_h) = h$ for all $h \in H$. The inverse to
 $(h.V)\mapsto (h,\A_{H}(h.V))$ is given by $(h,h')\mapsto h.\psi( h^{-1}h')$
 and thus $\A_{H}$ is a local addition. Moreover, the local addition $\A_H$ is
 left-invariant and right-invariant, i.e.\ for $h\in H$, $V_{1}\in \tilde{W}$
 and $V_{2}\in T \rho_{h^{-1}}(\tilde{W})$ we have
 \begin{equation}\label{eqn:equiv_loc_add_1}
  \A_H (h.V_{1})= h\cdot\A_H(V_{1})
 \end{equation}
 by definition and, if $V_{2}\in T_{h'}H$,
 \begin{equation}\label{eqn:equiv_loc_add_2}\begin{aligned}
  \A_{H}(V_{2}.h)	&= \A_{H}((h'h(h'h)^{-1}).V_{2}.h)=h'h\cdot \exp_{H}(\Ad(h^{-1})(h'^{-1}.V_{2}))= h'hh^{-1}\exp_{H}(h'^{-1}.V_{2})\cdot h\\ 
  &=\A_{H}(V_{2})\cdot h.
  \end{aligned}
 \end{equation}
 We use the local addition on $H$ to construct the desired local addition on
 the gauge groupoid $\frac{P \times P}{H} \toto M$.
 
 We will use the notation introduced in Example \ref{exmp:gauge_groupoid} for
 the gauge groupoid. To simplify the notation, define
 $\gamma_{p_{1},p_{2}} \coloneq\delta( \sigma_{i}(\pi( p_{1}),p_{1})) \delta (\sigma_{i}(\pi(p_{2})),p_{2})^{-1}$,
 set $U_{ij} \coloneq U_i \cap U_j$ and denote by
 $k_{ji} \colon U_{ij} \rightarrow H$ the smooth map
 $x\mapsto\delta(\sigma_{j}(x),\sigma_{i}(x))$.\bigskip
 
 \textbf{Construction of the local addition in charts:} Fix $i \in I$ and let
 $\A_M \colon TM \rightarrow M$ be a local addition for $M$. Since $M$ is
 finite dimensional a globally defined local addition always exists by \cite[p.
 441]{conv1997}. For $i \in I$ we set $W_i \coloneq \A_M^{-1} (U_i) \cap TU_i$
 and $V_{i}:=\pi_{TU_{i}}(W_{i})$. Then $V_{i}$ is open in $U_{i}$ and $W_i$ is
 an open neighbourhood of the zero-section in $TV_i$. By making the indexing
 set $I$ larger and shrinking $U_{i}$ if necessary we may assume that
 $(V_{i})_{i\in I}$ is still an open cover of $M$.
 
 We now now want to use similar trivialisations as in Example
 \ref{exmp:gauge_groupoid} for the gauge groupoid of the principal $TH$-bundle
 $T \pi\from TP\to TM$. In order to obtain trivialisations with a slightly more
 specialised property, we proceed as follows. Since $V_{i}\se U_{i}$, for the
 open cover $(TV_{i})_{i\in I}$ there exist the local sections
 $T \sigma_{i}\from TV_{i}\to TP$ of $T \pi$. Now choose a connection on $TP$,
 which we interpret as a decomposition $TP=T^{v}P\oplus \mf{H}$ of the tangent
 bundle of $P$ into the vertical bundle $T^{v}P:= \ker(T \pi)$ and an
 $H$-equivariant horizontal complement. The existence of such a connection is
 ensured by the smooth paracompactness of the base $TM$ by constructing a
 connection on $\left.TP\right|_{TV_{i}}$ and patching them together with a
 partition of unity. From this we obtain the projection
 $\pi^{\mf{H}}\from TP\to \mf{H}$, which is a morphism of vector bundles over
 $TM$. Consequently,
 $\wt{\sigma}_{i}\coloneq  \pi^{\mf{H}}\circ T \sigma_{i}\from TV_{i}\to TP$ is
 another system of sections of $T \pi$. From this we deduce for
 $v_x \in TV_i \cap TV_j$ the formula
 \begin{equation*}
  \wt{\sigma}_{i}(v_{x})=\wt{\sigma}_{j}(v_{x})\cdot k_{ji}(x),
 \end{equation*}
 since $\wt{\sigma}_{i}(v_{x})$ and $\wt{\sigma}_{j}(v_{x})$ are both
 \emph{horizontal} tangent vectors in $T_{\sigma_{i}(x)}P$ and
 $T_{\sigma_{j}(x)}P$ respectively. If we denote as in Example
 \ref{exmp:gauge_groupoid} the trivialisations associated to the sections
 $\wt{\sigma}_{i}$ by
 \begin{equation*}
  \wt{TK_{ij}}\from \frac{T \pi^{-1}(TV_{i})\times T \pi^{-1}(TV_{j})}{TH}\to TV_{i}\times TV_{j}\times TH,
 \end{equation*}
 then the associated chart changes are given by
 \begin{equation*}
  \wt{TK_{ij}}\circ \wt{TK_{mn}}^{-1}\from TV_{i}\times TV_{j}\times TH\to TV_{m}\times TV_{n}\times TH,\quad
  (V_x,V_y,V_{h})\mapsto (V_x,V_y,k_{mi}(x)\cdot V_{h}\cdot k_{nj}(y)^{-1}).
 \end{equation*}
 Here the product in the third component is the product in the tangent group
 $TH$. Now we can define a smooth map
 \begin{equation}\label{eq: GAU:local:Add}
  \A_{ij} \colon \wt{TK}_{ij}^{-1} (V_i \times V_j \times \tilde{W}) \rightarrow K_{ij}( U_i \times U_j \times H) ,\quad \A_{ij} \coloneq K_{ij}^{-1} \circ (\A_M \times \A_M \times \A_H ) \circ  \wt{TK}_{ij}.
 \end{equation}
 By construction
 $\wt{TK}_{ij} (0_{\langle p,q\rangle}) = (0_{\pi (p)}, 0_{\pi (q)} , 0_{\gamma_{u_2,u_1}})$
 holds for $\langle p,q\rangle $ in the domain of $K_{ij}$. Since $\A_M$ and
 $\A_H$ are local additions, for all such $\langle p,q\rangle$ we obtain
 $\A_{ij} (0_{\langle p,q\rangle})= \langle p,q\rangle$.\bigskip
 
 \textbf{Claim: For $i,j,m,n \in I$ the maps $\A_{ij}$ and $\A_{mn}$ coincide
 on the intersection of their domains.} Assume that the intersection
 $\frac{P_{im}\times P_{jn}}{H}$ of the domains of $K_{ij}$ and $K_{mn}$ is
 non-empty. Let $\langle p,q\rangle$ be an element of
 $\frac{P_{im}\times P_{jn}}{H}$ with $x \coloneq \pi (p)$ and
 $y \coloneq \pi(q)$. We will show that for
 $V_{\langle p,q\rangle} \in T_{\langle p,q\rangle} \frac{P \times P}{H} \cap \op{dom} \A_{ij} \cap \,\op{dom} \A_{mn}$
 the mappings $\A_{ij}$ and $\A_{mn}$ yield the same image. The image
 $\wt{TK}_{ij} (V_{\langle p,q\rangle}) = (V_x,V_y,V_{\gamma_{p,q}}) \in T_x V_{im} \times T_y V_{jn} \times T_{\gamma_{p,q}} H$
 is related to the image under $\wt{TK}_{mn}$ via the formula
 \begin{equation}\label{eq: chch:relation}
  \wt{TK}_{mn} (V_{\langle p,q\rangle}) = (V_x, V_y, k_{mi} (x) \cdot V_{\gamma_{p,q}} \cdot k_{nj} (y)^{-1})
 \end{equation}
 The change of charts formula \eqref{eq: chch:relation} shows that the first
 two components of the image are invariant under change of charts. By
 definition of the maps $\A_{ij}$ and $\A_{mn}$, the two maps coincide in these
 components since they are just a restriction of the map $\A_M$. We compute now
 a formula for the third component of
 $K_{mn} \circ \A_{mn} (V_{\langle p,q\rangle})$, which is given by \eqref{eq:
 chch:relation}, \eqref{eq: GAU:local:Add}, \eqref{eqn:equiv_loc_add_2} and
 \eqref{eqn:equiv_loc_add_1} by
 \begin{equation*}
  \A_H (k_{mi} (x) \cdot V_{\gamma_{p,q}} \cdot k_{nj} (y)^{-1}) = k_{mi} (x) \A_H (V_{\gamma_{p,q}}) k_{nj} (y)^{-1}.
 \end{equation*}
 We can thus conclude
 $\A_{ij} (V_{\langle p,q\rangle}) = \A_{mn}  (V_{\langle p,q\rangle})$. As the
 smooth maps $\A_{ij}, (i,j) \in I^{2}$ coincide on the intersection of their
 domains, we obtain a well-defined smooth map
 \begin{displaymath}
  \A_{\Gau} \colon \bigcup_{(i,j) \in I^{2}} \op{dom} \A_{ij} \opn
  T\left(\frac{P\times P}{H}\right) \rightarrow G ,\quad V \mapsto \A_{ij} (V)
  \text{ if } V \in \op{dom} \A_{ij}.
 \end{displaymath}
 
 \textbf{Claim: $\A_{\Gau}$ is a local addition adapted to the source
 projection:} By construction $\A_{\Gau}$ is defined on an open
 neighbourhood of the zero-section. The local additions $\A_M$ and $\A_H$ do
 not depend on the chart $K_{ij}$. Using this fact, an easy computation in
 charts shows that
 $(\pi_{T\frac{P\times P}{H}}|_{\bigcup_{(i,j) \in I^{2}} \op{dom } \A_{ij}}, \A_{\Gau})$
 is a diffeomorphism onto an open neighbourhood of the diagonal in
 $\frac{P\times P}{H} \times \frac{P\times P}{H}$. We conclude that
 $\A_{\Gau}$ is a local addition.
 
 Recall that the source projection of
 the gauge groupoid $\frac{P\times P}{H}$ is the mapping
 $\alpha \colon \frac{P\times P}{H} \rightarrow M, \langle p,q\rangle \mapsto \pi (q)$.
 Computing in the chart $K_{ij}$ we see that $\alpha$ is just the projection
 onto the second factor of the product. Hence the kernel of $T\alpha$ at a
 point $\langle p,q\rangle$ in this chart is the subspace of elements
 $V_{\langle p,q\rangle} \in T_{\langle p,q\rangle} \frac{P\times P}{H}$ with
 $\pr_2 \circ \wt{TK}_{ij} (V_{\langle p,q\rangle}) = 0_{\pi (q)}$. The local
 addition $\A_M$ satisfies $\A_M (0_{\pi (q)}) = \pi (q)$. By construction,
 this implies
 $\A_{\Gau} (\ker T_{\langle p,q\rangle} T\alpha ) \subseteq \pi^{-1} (\pi(q))$.
 Summing up, the local addition $\A_{\Gau}$ is adapted to the source
 projection.
\end{proof}

\begin{remark}
 The same argument as in the previous proof shows that under the same
 assumptions the associated Lie group bundle $\frac{P\times H}{H}\toto M$ has an
 adapted local addition. In fact, $\frac{P\times H}{H}\toto M$ may be considered as
 the arrow manifold of the subgroupoid $\frac{P\times_{M} P}{H}\toto M$ (with
 equal source and target projection) of $\frac{P\times P}{H}\toto M$, and the
 local addition constructed in the previous proof restricts to a local addition
 on this submanifold with the desired properties.
\end{remark}

\begin{example}\label{exmp:gauge_and_automorphism_group}
 For a principal $H$-bundle $\pi\from P\to M$ with locally exponential
 structure group $H$ and compact $M$ we thus obtain a Lie group structure on
 $\Bis(\frac{P\times P}{H})$. Moreover, the natural map
 \begin{equation*}
  \beta_{*}\from  \Bis\left(\frac{P\times P}{H}\right)\to \Diff(M)
 \end{equation*}
 is smooth by Theorem \ref{thm: MFDMAP}
 \ref{thm:manifold_structure_on_smooth_mapping_h}. Its kernel is the group of
 bisections $\Bis(\frac{P\times_{M} P}{H})$ of the associated Lie group bundle.
 The latter is a submanifold of $\Bis(\frac{P\times P}{H})$ since the adapted
 local addition on $\frac{P\times P}{H}$ used in the construction of the
 manifold structure on $\Bis(\frac{P\times P}{H})$ restricts to an adapted
 local addition on $\frac{P\times_{M} P}{H}$, and thus the corresponding carts
 for the manifold structure on $\Bis(\frac{P\times P}{H})$ yield submanifold
 charts for $\Bis(\frac{P\times_{M} P}{H})$. In total, we have a sequence of
 Lie groups
 \begin{equation}\label{eqn3}
  \Bis\left(\frac{P\times_{M} P}{H}\right)\hookrightarrow \Bis\left(\frac{P\times P}{H}\right)
  \twoheadrightarrow \im(\beta_{*}).
 \end{equation}
 In fact, we get this sequence if we start with an arbitrary locally trivial
 Lie groupoid \cite[p.\
 130]{Mackenzie05General-theory-of-Lie-groupoids-and-Lie-algebroids} (or in
 finite dimensions, equivalently with a Lie groupoid whose base is connected
 and whose Lie algebroid is transitive \cite[Corollary
 3.5.18]{Mackenzie05General-theory-of-Lie-groupoids-and-Lie-algebroids}).
 
 We will now explain how to turn this sequence into an extension of Lie groups,
 i.e., into a locally trivial bundle. To this end it suffices to construct a
 smooth section of $\beta_{*}$ on some identity neighbourhood of $\Diff(M)$.
 This then implies in particular that $\im(\beta_{*})$ is an open subgroup of
 $\Diff(M)$.
 
 Recall the notation from Example \ref{exmp:gauge_groupoid}. We choose a finite
 subcover $U_{1},...,U_{n}$ of the trivialising cover $(U_{i})_{i\in I}$. From
 Theorem \ref{thm: MFDMAP} \ref{thm:manifold_structure_on_smooth_mapping_a}
 recall the chart
 \begin{equation*}
  \varphi_{\id}\from O_{\id}\to \Gamma(M\xleftarrow{}TM)\cap C^{\infty}(M,U)
 \end{equation*}
 of $\Diff(M)$, where $U$ denotes the domain of a local addition
 $\A\from U\opn TM\to M$. Observe that
 $\varphi_{\id}^{-1}(h)(x)=\varphi_{\id}^{-1}(h')(x)$ if $h(x)=h'(x)$ follows
 from the construction of $\varphi_{\id}$. We now choose a partition of unity
 $\lambda_{i}\from M\to \R$ with $\supp(\lambda_i)\se U_{i}$. For
 $f\in O_{\id}$ we then have that
 \begin{equation*}
  s_{i}(f):=\varphi_{\id}^{-1}((\lambda_{1}+...+\lambda_{i-1})\cdot\varphi_{\id}(f))^{-1} \circ \varphi_{\id}^{-1}((\lambda_{1}+...+\lambda_{i})\cdot \varphi_{\id}(f))
 \end{equation*}
 defines a smooth map $s_{i}\from O_{\id}\to \Diff(M)$. Moreover, we have
 $s_{i}(f)(x)=x$ if $x\notin \supp(\lambda_{i})$, since
 \begin{equation*}
  ((\lambda_{1}+...+\lambda_{i-1}) \cdot\varphi_{\id}(f))(x)=((\lambda_{1}+...+\lambda_{i})\cdot\varphi_{\id}(f))(x)\quad\text{ for }\quad x\notin \supp(\lambda_{i}).
 \end{equation*}
 In addition, $s_{1}(f) \circ...\circ s_{n}(f)=f$ follows directly from the
 definition (see also \cite[Proposition
 1]{HallerTeichmann03Smooth-perfectness-through-decomposition-of-diffeomorphisms-into-fiber-preserving-ones}).
 
 With the aid of the chart
 \begin{equation*}
  K_{ij}\from \frac{\pi^{-1}(U_{i})\times \pi^{-1}(U_{j})}{H}\to U_{i}\times U_{j}\times H
 \end{equation*}
 we then define the bisection
 \begin{equation*}
  \wt{s}_{ij}(f)\from M\times M\to \frac{P\times P}{H},\quad (x,y)\mapsto  \begin{cases}
  K_{ij}^{-1}( (x,s_{j}(f)(y),e)) & \text{ if } (x,y)\in U_{i}\times U_{j}\\
  1_{(x,y)}=K_{ij}^{-1}(x,y,e) & \text{ else}.
  \end{cases}
 \end{equation*}
 This defines a smooth map since $s_{j}(f)(x)=x$ if
 $x\notin \supp(\lambda_{j})$ and $M\times (M\setminus \supp(\lambda_{j}))$ is
 open in $M\times M$. Moreover,
 \begin{equation*}
  \wt{s}_{ij}\from O_{\id}\to \Bis\left(\frac{P\times P}{H}\right),\quad f\mapsto \wt{s}_{ij}(f)
 \end{equation*}
 is smooth by Theorem \ref{thm: MFDMAP}
 \ref{thm:manifold_structure_on_smooth_mapping_h}. From
 $\beta \circ K_{ij}^{-1}=\pr_{2}$ we infer
 $\beta_{*}(\wt{s}_{ij}(f))=s_{j}(f)$ and thus
 \begin{equation*}
  O\to \Bis\left(\frac{P\times P}{H}\right),\quad f\mapsto \wt{s}_{11}(f)\star...\star\wt{s}_{nn}(f)
 \end{equation*}
 is a smooth section of $\beta_{*}$, where $\star$ is the product \eqref{eq:
 BISGP1}. This turns \eqref{eqn3} into an extension of Lie groups.
 
 We now identify $\Bis\left(\frac{P\times P}{H}\right)$ with the group of
 bundle automorphism of $P$ via the group isomorphism
 \begin{equation*}
  \Aut(P)\to \Bis\left(\frac{P\times P}{H}\right),\quad
  f \mapsto \left( m\mapsto \langle \sigma_{i}(m),f(\sigma_{i}(m))\rangle \text{ if }x\in U_{i}\right).
 \end{equation*}
 Under this isomorphism the subgroup $\Bis(\frac{P\times_{M}P}{H})$ maps to the
 group of gauge transformations $\Gau(P)$. If we denote by
 $\Diff(M)_{[P]}=\im(\beta_{*})$ the open subgroup of diffeomorphisms of $M$
 that lift to bundle automorphisms, then we obtain the well-known extension of
 Lie groups
 \begin{equation*}
  \Gau(P)\to \Aut(P)\to\Diff(M)_{[P]}
 \end{equation*}
 from
 \cite{Wockel07Lie-group-structures-on-symmetry-groups-of-principal-bundles}.
 Moreover, the natural action
 \begin{equation*}
  \Aut(P)\times P\to P,\quad (f,p)\mapsto f(p)
 \end{equation*}
 is smooth with respect to this identification, since it can be identified
 (non-canonically) with the action of $\Bis(\frac{P\times P}{H})$ on the
 $\alpha$-fibre $\alpha^{-1}(m)=\frac{P_{m}\times P}{H}\cong P$ for each
 $m\in M$.
\end{example}

\section{The Lie algebra of the group of bisections}
\label{sec:the_lie_algebra_of_the_group_of_bisections}

\begin{tabsection}
 In this section the Lie algebra of the Lie group $\Bis (\cG)$ is computed for
 a Lie groupoid $\cG = (G \toto M)$. Throughout this section $\cG$ denotes a
 locally convex and locally metrisable Lie groupoid with compact space of
 objects $M$ that admits an adapted local addition.

 It will turn out that the Lie algebra of the group of bisections is naturally
 (anti-) isomorphic to the Lie algebra of sections of the Lie algebroid
 $L(\cG)$ associated to $\cG$ (see Section
 \ref{sec:locally_convex_lie_groupoids_and_lie_groups} for the corresponding
 notions). Before we compute the bracket, let us identify the tangent space
 $T_1 \Bis (\cG)$.
\end{tabsection}

\begin{remark}
 By construction $\Bis (\cG)$ is an open submanifold of
 $\Gamma (M \xleftarrow{\alpha} G)$. We first analyse the space
 $T_1 C^{\infty}(M,G)$. This is by Theorem
 \ref{thm:tangent_map_of_pull_back_and_push_forward} isomorphic to the space
 $\Gamma(1^{*}TG)$, the isomorphism given by restricting the vector bundle
 isomorphism
 \begin{equation*}
  \Phi_{M,G}\from T C^{\infty}(M,G)\to C^{\infty}(M,TG),\quad    \eqclass{t\mapsto \eta(t)}\mapsto
  \left(m\mapsto \eqclass{t\mapsto \eta^\wedge(t,m)}\right)
 \end{equation*}
 to $T_{1}C^{\infty}(M,G)$. Here we have identified tangent vectors in
 $C^{\infty}(M,G)$ with equivalence classes $\eqclass{\eta}$ of smooth curves
 $\eta\from \mathopen{]}-\varepsilon,\varepsilon\mathclose{[}\to C^{\infty}(M,G)$
 for some $\varepsilon>0$ \cite[Definition I.3.3]{neeb2006}. This isomorphism
 maps $T_{1}C^{\infty}(M,G)$ onto
 \begin{equation*}
  \left\{f\in C^{\infty}(M,TG)\mid f(m)\in T_{1_{m}}G \text{ for all }m\in M \right\},
 \end{equation*}
 and the latter space is naturally isomorphic to $\Gamma(1^{*}TG)$. If we
 restrict in $C^{\infty}(M,G)$ to the submanifold
 $\Gamma(M\xleftarrow{\alpha}G)$, then this isomorphism maps
 $T_{1}(\Gamma(M\xleftarrow{\alpha} G))$ onto
 \begin{equation*}
  \left\{f\in C^{\infty}(M,TG)\mid f(m)\in T^{\alpha}_{1_{m}}G \text{ for all }m\in M \right\},
 \end{equation*}
 which in turn is naturally isomorphic to $\Gamma (\Lf(\cG))$. In the sequel we
 will denote by
 \begin{equation}\label{eqn:natural_isomorphism}
  \varphi_{\cG}\from T_{1}\Bis(\cG)=\Lf(\Bis(\cG))\to \Gamma(1^{*}T^{\alpha}G)=\Gamma(\Lf(\cG)).
 \end{equation}
 the resulting isomorphism.
\end{remark}

\begin{tabsection}
 Following some preparations, we will prove in Theorem \ref{theorem: LAlg} that
 the Lie algebra bracket $\LB{}$ on $T_1 \Bis (\cG)$ is, with respect to the
 isomorphism $\varphi_{\cG}$, the negative of the bracket of the Lie algebroid
 associated to $\cG$. To compute the Lie bracket on $T_1 \Bis (\cG)$ we adapt
 an idea of Milnor. In \cite[p.
 1041]{Milnor84Remarks-on-infinite-dimensional-Lie-groups} a natural action of
 the diffeomorphism group was used to compute the Lie bracket of its Lie
 algebra. In the present context we exploit the natural action of the group of
 bisections via left-translations on the manifold of arrows $G$ from Remark \ref{rem:structure_on_bisections} \ref{rem:structure_on_bisections_c}.
\end{tabsection}

\begin{proposition}\label{proposition:
 related} Let $X$ be an element of $T_1 \Bis (\cG)$ and denote by $0$ the
 zero-section in $\Gamma (TG)$. Then the vector fields
 $\overrightarrow{\varphi_{\cG}(X)}$ (cf.\ \eqref{eq: RI:VF} and
 \eqref{eqn:natural_isomorphism}) and $X^\rho \times 0$ are $\gamma$-related,
 i.e.\ the diagram
 \begin{equation}\label{diag: action}
  \begin{aligned} \begin{xy}
  \xymatrix{
  T\Bis (\cG) \times TG \ar[rr]^-{T\gamma}  & &  TG  \\
  \Bis (\cG) \times G \ar[rr]^-{\gamma} \ar[u]^{X^\rho \times 0}  &       &   G \ar[u]_{\overrightarrow{\varphi_{\cG} (X)}}   
  }
  \end{xy} \end{aligned}
 \end{equation}
 commutes.
\end{proposition}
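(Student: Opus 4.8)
The plan is to verify the commutativity of \eqref{diag: action} pointwise, i.e.\ to show that for every $(\psi, g) \in \Bis(\cG) \times G$ we have
\[
 T_{(\psi,g)}\gamma\bigl(X^{\rho}(\psi), 0_{g}\bigr) = \overrightarrow{\varphi_{\cG}(X)}\bigl(\gamma(\psi,g)\bigr).
\]
Since in the locally convex setting tangent vectors are represented by smooth curves, I would compute the left-hand side by choosing a convenient representing curve. The two features I want to exploit — this is Milnor's idea — are that $\gamma$ is a smooth \emph{left} action and that $X^{\rho}$ is the \emph{right}-invariant vector field extending $X$.

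First I would record that $\gamma$ is a left action: a short computation with \eqref{eq: BISGP1} and the groupoid identities $\beta(ab)=\beta(a)$ and $\alpha(\sigma(x))=x$ gives $\gamma(\sigma \star \tau, g) = \gamma(\sigma, \gamma(\tau, g))$. Next, fix $(\psi,g)$, put $h \coloneq \gamma(\psi,g) = \psi(\beta(g))g$, and choose a smooth curve $d$ in $\Bis(\cG)$ with $d(0)=1$ and $d'(0)=X$. Setting $c(t) \coloneq d(t) \star \psi$ yields a curve with $c(0)=\psi$ and $c'(0)=T_{1}\rho_{\psi}(X)=X^{\rho}(\psi)$, so that $t\mapsto(c(t),g)$ represents the tangent vector $(X^{\rho}(\psi), 0_{g})$ (the second slot is held constant at $g$). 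Hence
\[
 T_{(\psi,g)}\gamma\bigl(X^{\rho}(\psi),0_{g}\bigr) = \tfrac{d}{dt}\Big|_{t=0}\gamma(c(t),g) = \tfrac{d}{dt}\Big|_{t=0}\gamma\bigl(d(t),h\bigr) = \tfrac{d}{dt}\Big|_{t=0} d(t)(\beta(h))\, h,
\]
where the middle equality is the left-action property and the base point $h$ no longer depends on $t$.

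It then remains to identify this derivative. Since each $d(t)$ is a section of $\alpha$, the element $d(t)(\beta(h))$ lies in $\alpha^{-1}(\beta(h))$, so $t\mapsto d(t)(\beta(h))\,h = R_{h}\bigl(d(t)(\beta(h))\bigr)$ and the chain rule gives $TR_{h}\bigl(\tfrac{d}{dt}\big|_{0} d(t)(\beta(h))\bigr)$. Unwinding the isomorphism $\varphi_{\cG}$ from \eqref{eqn:natural_isomorphism} together with the preceding Remark, the curve-class $t\mapsto d(t)(m)$ represents exactly $\varphi_{\cG}(X)(m)\in T^{\alpha}_{1_{m}}G$ for each $m\in M$; taking $m=\beta(h)$ turns the derivative into $TR_{h}\bigl(\varphi_{\cG}(X)(\beta(h))\bigr)$, which by the defining formula \eqref{eq: RI:VF} is precisely $\overrightarrow{\varphi_{\cG}(X)}(h)$. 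This establishes the pointwise identity and hence the commutativity of the diagram. I expect the only genuinely delicate step to be the bookkeeping in this last paragraph: one must carefully thread the tangent-space identifications of the Remark with the convention that $X^{\rho}$ is right-invariant while $\gamma$ is a left action, since it is exactly this left/right interplay that makes the right-invariant vector field $\overrightarrow{\varphi_{\cG}(X)}$ (rather than its left-invariant counterpart) appear on the right-hand side.
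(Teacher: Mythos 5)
Your proof is correct and follows essentially the same route as the paper: both implement Milnor's idea by representing $X$ as a smooth curve in $\Bis(\cG)$, exploiting the compatibility of the action $\gamma$ with the product $\star$ (your left-action identity is exactly the paper's formula \eqref{eq: eval}), and concluding via the right-invariance formula \eqref{eq: RI:VF}. The only difference is organizational — the paper unwinds the right-hand side $\overrightarrow{\varphi_{\cG}(X)}\circ\gamma$ into curve classes until $T\gamma$ appears, while you compute $T\gamma(X^{\rho}\times 0)$ directly with a representing curve — which is the same computation run in the opposite direction.
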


\begin{proof}
 To simplify computations we identify $X$ with the equivalence class
 $\eqclass{\eta}$ of a smooth curve
 $\eta \colon \mathopen{]}-\varepsilon , \varepsilon \mathclose{[} \rightarrow \Bis (\cG)$
 satisfying $\eta (0)= 1$ and $\eta' (0) = X$. From Theorem \ref{thm: MFDMAP}
 \ref{thm:manifold_structure_on_smooth_mapping_d} we infer that
 $\eta^{\wedge} \colon \mathopen{]}-\varepsilon , \varepsilon \mathclose{[} \times M \rightarrow G$,
 $(t,m)\mapsto \eta(t)(m)$ is smooth. Thus for each $\psi \in \Bis (\cG)$ we
 obtain the smooth map $(\rho_\psi \circ \eta)^{\wedge}$. \ Evaluating in
 $(t,x) \in \mathopen{]}-\varepsilon , \varepsilon \mathclose{[} \times M$ we
 obtain the formula
 \begin{equation}\label{eq: eval}
  (\rho_\psi \circ \eta)^{\wedge} (t,x) = (\eta^{\wedge} (t, \cdot) \star \psi)(x) = m (\eta^{\wedge} (t, \beta (\psi (x))),\psi(x)).
 \end{equation}
 Moreover, by definition of right invariant vector fields
 $X^{\rho} (\psi ) = \eqclass{t \mapsto \rho_\psi \circ\eta^{\wedge} (t ,\cdot)}$
 holds for each $\psi \in \Bis (\cG)$. We use the above facts to compute for
 $(\psi,g) \in \Bis (\cG) \times G$
 \begin{align*}
  \overrightarrow{\varphi_{\cG} (X)} \circ \gamma (\psi,g) &\stackrel{\hphantom{\eqref{eq: eval}}}{=}  
  \overrightarrow{\varphi_{\cG} (X)}(m(\psi (\beta (g)),g))   \stackrel{\eqref{eq: RI:VF}}{=}  
  TR_{m(\psi (\beta (g)),g)} \varphi_{\cG} (X) (\beta (m(\psi (\beta (g)),g))) \\  &\stackrel{\hphantom{\eqref{eq: eval}}}{=}  
  TR_{m(\psi (\beta (g)),g)} \varphi_{\cG}( X) (\beta (\psi (\beta (g)))) = 
  \eqclass{t \mapsto m (m (\eta^{\wedge} (t, \beta (\psi(\beta (g)))), \psi (\beta (g))),g)}\\
  &\stackrel{\eqref{eq: eval}}{=} 
  \eqclass{t \mapsto m(\rho_\psi \circ \eta^{\wedge} (t,\beta(g)),g)} = 
  \eqclass{t \mapsto \gamma (\rho_\psi \circ \eta^{\wedge} (t,\cdot),g)} \\
  &\stackrel{\hphantom{\eqref{eq: eval}}}{=} 
  T\gamma (X^\rho (\psi) , 0_g) = 
  T\gamma \circ (X^\rho \times 0) (\psi , g).
 \end{align*}
 Hence \eqref{diag: action} commutes and the assertion follows.
\end{proof}

\begin{setup}
 Before phrasing the main result of this section we introduce the following
 notation. If we fix a manifold $M$ and consider the category
 $\cat{LieAlgebroids}_{M}$ of locally convex Lie algebroids over $M$, then
 taking sections gives rise to a functor
 \begin{equation*}
  \Gamma\from \cat{LieAlgebroids}_{M}\to \cat{LieAlgebras}.
 \end{equation*}
 Likewise, there is the functor
 \begin{equation*}
  -\Gamma\from \cat{LieAlgebroids}_{M}\to \cat{LieAlgebras}
 \end{equation*}
 which assigns to a Lie algebroid its Lie algebra of sections, but with the
 negative Lie bracket on it.
\end{setup}

\begin{theorem}\label{theorem:
 LAlg} Let $M$ be a compact manifold and $\cG=(G\toto M)$ be a locally convex
 Lie groupoid admitting an adapted local addition. Then the morphism of
 topological vector spaces
 \begin{equation*}
  \varphi_{\cG}\from \Lf(\Bis(\cG))\to \Gamma(\Lf(\cG))
 \end{equation*}
 from \eqref{eqn:natural_isomorphism} is actually an anti-isomorphism of Lie
 algebras. Moreover, $\varphi_{\cG}$ constitutes a natural isomorphism fitting
 into the diagram
 \begin{equation*}
  \vcenter{ \xymatrix{\cat{LieGroupoids}_{M}^{\A} \ar[r]^{\Lf}\ar[d]^{\Bis} & \cat{LieAlgebroids}_{M}\ar[d]^{-\Gamma} %
  \\ \cat{LieGroups} \ar[r]^{\Lf}&\cat{LieAlgebras},
  \ar@{=>}^{\varphi} "2,1"+(10,4); "1,2"-(10,4)
  }}
 \end{equation*}
 of functors.
\end{theorem}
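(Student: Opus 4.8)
The plan is to take the identification $\varphi_{\cG}$ of topological vector spaces as already established in the preceding remark (via $\Phi_{M,G}$), so that only two things remain: that $\varphi_{\cG}$ reverses the bracket, and that the family $(\varphi_{\cG})$ is natural. First I would settle the bracket statement, then chase the naturality square through the definitions.

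For the bracket, the geometric work is already done by Proposition \ref{proposition: related}: for every $X\in T_1\Bis(\cG)$ the field $X^{\rho}\times 0$ on $\Bis(\cG)\times G$ is $\gamma$-related to $\overrightarrow{\varphi_{\cG}(X)}$ on $G$. I would invoke the standard fact that $\gamma$-relatedness is compatible with Lie brackets (valid in the locally convex/convenient setting, cf.\ \cite{michor1980}). Applied to $X^{\rho}\times 0$ and $Y^{\rho}\times 0$, and using that the second factor vanishes so that $[X^{\rho}\times 0,Y^{\rho}\times 0]=[X^{\rho},Y^{\rho}]\times 0$, this shows that $[X^{\rho},Y^{\rho}]\times 0$ is $\gamma$-related to $[\overrightarrow{\varphi_{\cG}(X)},\overrightarrow{\varphi_{\cG}(Y)}]$. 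Here two sign conventions enter. On the group side $X\mapsto X^{\rho}$ is an \emph{anti}-isomorphism (Setup \ref{setup: RI:VF}), so $[X^{\rho},Y^{\rho}]=-[X,Y]^{\rho}$; on the algebroid side the defining formula \eqref{eq: LB} together with the isomorphism \eqref{eq: RI:VF} gives $[\overrightarrow{A},\overrightarrow{B}]=\overrightarrow{[A,B]}$. Hence $-[X,Y]^{\rho}\times 0$ is $\gamma$-related to $\overrightarrow{[\varphi_{\cG}(X),\varphi_{\cG}(Y)]}$.

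The step I expect to be the real obstacle is upgrading relatedness to an equality of vector fields on $G$, since $\gamma$ is not a submersion. For this I would apply Proposition \ref{proposition: related} a second time, to the single element $[X,Y]\in T_1\Bis(\cG)$: it says $[X,Y]^{\rho}\times 0$, and hence $-[X,Y]^{\rho}\times 0$, is $\gamma$-related to $-\overrightarrow{\varphi_{\cG}([X,Y])}$. Thus one and the same field $-[X,Y]^{\rho}\times 0$ is $\gamma$-related to both $\overrightarrow{[\varphi_{\cG}(X),\varphi_{\cG}(Y)]}$ and $-\overrightarrow{\varphi_{\cG}([X,Y])}$; writing the relatedness identity as $W\circ\gamma=T\gamma\circ(-[X,Y]^{\rho}\times 0)$ for both targets $W$, the two right-hand sides agree, so $\overrightarrow{[\varphi_{\cG}(X),\varphi_{\cG}(Y)]}\circ\gamma=-\overrightarrow{\varphi_{\cG}([X,Y])}\circ\gamma$. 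Restricting to $\psi=1$, where $\gamma(1,\cdot)=\id_G$ because the neutral bisection acts trivially, yields the genuine equality of fields on $G$. Since $A\mapsto\overrightarrow{A}$ is injective, this gives $[\varphi_{\cG}(X),\varphi_{\cG}(Y)]=-\varphi_{\cG}([X,Y])$, i.e.\ $\varphi_{\cG}$ is an anti-isomorphism of Lie algebras, equivalently an isomorphism onto $(-\Gamma)(\Lf(\cG))$.

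Finally, for naturality I would verify the square for a morphism $f\from\cG\to\cH$ by representing $X\in T_1\Bis(\cG)$ by a curve $\eta$ with $\eta(0)=1$, so that $\varphi_{\cG}(X)(m)=\eqclass{t\mapsto\eta^{\wedge}(t,m)}$. The vertical map $(-\Gamma)(\Lf(f))$ is postcomposition of sections with the bundle map $\Lf(f)$, which on fibres is $Tf$; hence $(-\Gamma)(\Lf(f))(\varphi_{\cG}(X))(m)=Tf\,\eqclass{t\mapsto\eta^{\wedge}(t,m)}=\eqclass{t\mapsto f(\eta^{\wedge}(t,m))}$. On the other route, $\Bis(f)(\eta(t))=f\circ\eta(t)$, so $\Lf(\Bis(f))(X)=\eqclass{t\mapsto f\circ\eta(t)}$ and $\varphi_{\cH}(\Lf(\Bis(f))(X))(m)=\eqclass{t\mapsto f(\eta^{\wedge}(t,m))}$ as well. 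The two paths coincide, so $\varphi$ is a natural transformation; being a fibrewise (anti-)isomorphism it is a natural isomorphism and fills in the functor diagram. I expect this last part to be routine bookkeeping, the only care being to match the recollection of $\varphi_{\cG}$ from $\Phi_{M,G}$ with the fibrewise definition of $\Lf(f)$ as induced by $Tf$.
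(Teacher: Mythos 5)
Your proposal is correct and takes essentially the same route as the paper's proof: both rely on Proposition \ref{proposition: related} applied to $X$, $Y$ and to $[X,Y]$, the compatibility of $\gamma$-relatedness with Lie brackets, the sign relation $[X^{\rho},Y^{\rho}]=-[X,Y]^{\rho}$ from \ref{setup: RI:VF}, and evaluation along the identity bisection (the paper evaluates at $(1,1_{x})$, you restrict to $\psi=1$ using $\gamma(1,\cdot)=\id_G$ and injectivity of $A\mapsto\overrightarrow{A}$, which is the same step). Your naturality check by representing $X$ as a curve is the same computation the paper packages via Theorem \ref{thm:tangent_map_of_pull_back_and_push_forward}.
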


\begin{proof}
 Recall that the bracket on $\Gamma (\Lf(\cG))$ is induced from the isomorphism
 \eqref{eq: RI:VF} with the right invariant vector fields on $G$. In
 Proposition \ref{proposition: related} we have seen that for
 $X,Y\in T_{1}\Bis(\cG)$ the right-invariant vector fields
 $\overrightarrow{\varphi_{\cG} (X)}$ and $\overrightarrow{\varphi_{\cG} (Y)}$
 are $\gamma$-related to $X^\rho \times 0$ and $Y^\rho \times 0$, respectively.
 Hence the Lie bracket
 $\LB[X^\rho \times 0 , Y^\rho \times 0] = \LB[X^\rho , Y^\rho] \times 0$ is
 $\gamma$-related to the Lie bracket
 $\LB[\overrightarrow{\varphi_{\cG}( X)},\overrightarrow{\varphi_{\cG} (Y)}]$,
 i.e., we have
 \begin{equation*}
  T\gamma \circ ([X^\rho,Y^\rho] \times 0) = [\overrightarrow{\varphi _{\cG}(X)} , \overrightarrow{\varphi_{\cG} (Y) }] \circ \gamma.
 \end{equation*}
 From this we deduce
 \begin{align*}
  -\varphi_{\cG}([X,Y])(x)&\stackrel{\hphantom{\ref{setup: RI:VF}}}{=}-\varphi_{\cG}(\overrightarrow{[X,Y]})(1_{x}) =-\varphi_{\cG}(\overrightarrow{[X,Y]})(\gamma(1,1_{x}))=T \gamma((-[X,Y]^{\rho}\times 0)(1,1_{x}))\\
  &\stackrel{\ref{setup: RI:VF}}{=} T \gamma(([X^{\rho},Y^{\rho}]\times 0)(1,1_{x})) =[\overrightarrow{\varphi_{\cG} (X)} , \overrightarrow{\varphi_{\cG}( Y)}] (\gamma(1,1_{x}))=[\overrightarrow{\varphi_{\cG(} X)} , \overrightarrow{\varphi_{\cG} (Y}] (1_{x})\\
  &\stackrel{\hphantom{\ref{setup: RI:VF}}}{=}[\overrightarrow{\varphi_{\cG} (X)} , \overrightarrow{\varphi_{\cG}( Y)}] ({x}).
 \end{align*}
 Hence \eqref{eq: LB} implies that $\varphi_{\cG}$ is an anti-isomorphism.
 
 To check that $\varphi_{\cG}$ is natural, suppose $\cH=(H\toto M)$ is another
 Lie groupoid over $M$ admitting an adapted local addition and
 $f\from \cG\to \cH$ is a morphism. Then $-\Gamma(\Lf(f))$ is the induced map
 on sections $\Gamma(\Lf(\cG))\to \Gamma(\Lf(\cH))$,
 $\xi\mapsto T f \circ \xi$. On the other hand, $\Bis(f)$ is the map
 $\Bis(\cG)\to \Bis(\cH)$, $\sigma\mapsto f \circ \sigma$. The tangent map of
 $f_{*}\from C^{\infty}(M,G)\to C^{\infty}(M,H)$,
 $\sigma \mapsto f \circ \sigma$ at $1$ is given by Theorem
 \ref{thm:tangent_map_of_pull_back_and_push_forward} by
 \begin{equation}\label{eqn2}
  T_{1}(f_{*})\from \Gamma(1^{*}TG)\to \Gamma(1^{*}TH),\quad
  \xi\mapsto Tf \circ \xi
 \end{equation}
 with respect to the identifications
 $T_{1}C^{\infty}(M,G)\cong \Gamma(1^{*}TG)$ and
 $T_{1}C^{\infty}(M,H)\cong \Gamma(1^{*}TH)$. Restricting the latter
 isomorphism to vertical sections gives exactly the isomorphism $\varphi_{\cG}$
 and \eqref{eqn2} gives the above formula for $-\Gamma(\Lf(f))$.
\end{proof}

\begin{example}
 For the gauge groupoid $\frac{P\times P}{H}\toto M$ of the principal
 $H$-bundle $\pi\from P\to M$ with locally exponential structure group $H$ we
 have the natural isomorphisms $\Lf(\frac{P\times P}{H}\toto M)\cong TP/H\to M$
 from Example \ref{exmp:gauge_groupoid}. Thus the extension
 \begin{equation}
  \Bis\left(\frac{P\times_{M} P}{H}\right)\hookrightarrow \Bis\left(\frac{P\times P}{H}\right)
  \twoheadrightarrow \im(\beta_{*})
 \end{equation}
 of locally convex Lie groups from \eqref{eqn3} gives rise via the latter
 isomorphism and the isomorphism from \eqref{eqn:natural_isomorphism} to the
 extension
 \begin{equation*}
  \Gamma(M\xleftarrow{} (T^{v}P)/H) \to \Gamma(M\xleftarrow{} {TP/H}) \to \Gamma(M\xleftarrow{} {TM})
 \end{equation*}
 of topological Lie algebras, where $T^{v}P:= \ker(T \pi)$ denotes the vertical
 subalgebroid of $TP/H$. This is of course the extension of Lie algebras which
 is naturally associated to the Atiyah sequence $T^{v}P/H \to TP/H \to TM$.
\end{example}

\begin{setup}
 For the following corollary, recall that a Lie algebroid $\cA$ is called
 \emph{integrable} if there is a Lie groupoid $\cG$ such that $\Lf(\cG)$ is
 isomorphic (over $M$) to $\cA$. In the same way, a Lie algebra $\fh$ is called
 \emph{integrable} if there is a Lie group $H$ such that $\Lf(H)$ is isomorphic
 to $\fh$.
\end{setup}

\begin{corollary}
 Suppose $M$ is a compact manifold and $\cA=(A\to M,a,\LB{})$ is a
 finite-dimensional Lie algebroid over $M$. If $\cA$ is integrable, then so is
 its algebra of sections $(\Gamma(M\xleftarrow{}A),\LB)$.
\end{corollary}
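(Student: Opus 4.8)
The plan is to realise $(\Gamma(M\xleftarrow{}A),\LB)$ as the Lie algebra of a suitable bisection group and to appeal to Theorem~\ref{theorem: LAlg}; the single delicate point will be the sign (the anti-isomorphism) occurring there. Everything else is a direct chaining of the results already established in this section.

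First I would unwind the integrability hypothesis. Since $\cA$ is integrable there is a Lie groupoid $\cG=(G\toto M)$ with $\Lf(\cG)\cong\cA$ over $M$, and because $\cA$ is finite-dimensional we may take $\cG$ to be a finite-dimensional Lie groupoid. Then $G$ and $M$ are in particular Banach (indeed finite-dimensional) manifolds, they are locally convex and locally metrisable, and the compact base $M$ admits smooth partitions of unity. Hence Proposition~\ref{prop: ex:locadd} furnishes an adapted local addition on $\cG$, so the hypotheses of Theorem~\ref{theorem: A} are met and $\Bis(\cG)$ is a locally convex Lie group.

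Next I would invoke Theorem~\ref{theorem: LAlg}: the natural map $\varphi_{\cG}\colon\Lf(\Bis(\cG))\to\Gamma(\Lf(\cG))$ is an anti-isomorphism of topological Lie algebras, and $\Gamma(\Lf(\cG))\cong\Gamma(\cA)=(\Gamma(M\xleftarrow{}A),\LB)$ as Lie algebras. Thus $(\Gamma(M\xleftarrow{}A),\LB)$ is anti-isomorphic to the Lie algebra of the Lie group $\Bis(\cG)$; equivalently, it is isomorphic to $\Lf(\Bis(\cG))$ equipped with the negative bracket. It remains only to absorb this sign.

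The step I regard as the crux, although it is elementary, is converting this anti-isomorphism into an honest integration. For this I would use that integrability of Lie algebras is stable under passing to the opposite bracket: for any locally convex Lie group $H$ the underlying manifold endowed with the opposite multiplication is again a locally convex Lie group $H^{\op{op}}$ (the multiplication is the original one precomposed with the smooth swap, and the inversion is unchanged), and its left translations are precisely the right translations of $H$. By Setup~\ref{setup: RI:VF} this identifies the left-invariant vector fields on $H^{\op{op}}$ with the right-invariant ones on $H$, so $\Lf(H^{\op{op}})$ is $\Lf(H)$ with the negative bracket. Applying this to $H=\Bis(\cG)$ yields a locally convex Lie group $\Bis(\cG)^{\op{op}}$ whose Lie algebra, via $\varphi_{\cG}$, is isomorphic to $(\Gamma(M\xleftarrow{}A),\LB)$. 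Therefore $(\Gamma(M\xleftarrow{}A),\LB)$ is integrable, being the Lie algebra of $\Bis(\cG)^{\op{op}}$.
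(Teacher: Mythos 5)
Your proof is correct and follows exactly the route the paper intends: the corollary is stated there without proof, as an immediate consequence of Proposition \ref{prop: ex:locadd} (a finite-dimensional groupoid over a compact base admits an adapted local addition), Theorem \ref{theorem: A}, and Theorem \ref{theorem: LAlg}. The only point where you diverge is the sign: your opposite-group construction $\Bis(\cG)^{\op{op}}$ is valid but heavier than necessary, since for any Lie algebra the negation map $X \mapsto -X$ is an isomorphism onto the same space equipped with the negated bracket, so composing the anti-isomorphism $\varphi_{\cG}$ with negation already gives an honest isomorphism of topological Lie algebras $\Lf(\Bis(\cG)) \cong (\Gamma(M\xleftarrow{}A),\LB)$.
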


\begin{question}
 The previous result is not a surprise. What is more interesting is the
 question about the converse statement: suppose that a finite-dimensional Lie
 algebroid is \emph{not} integrable, is then its algebra of sections also not
 integrable?
\end{question}

\begin{remark}
 Note that the Lie algebras that arise here as the Lie algebras of Lie groups
 of bisections carry more information that just the structure of a Lie algebra.
 In fact, the geometric structure that they have is subsumed in the notion of a
 Lie-Rinehart algebra
 \cite{Huebschmann90Poisson-cohomology-and-quantization,Kosmann-SchwarzbachMagri90Poisson-Nijenhuis-structures}.
 Thus a way to solve the above question could be to create a theory of objects
 that are integrating Lie-Rinehart algebras on an algebraic level (something
 that one might call Lie-Rinehart groups). To our best knowledge such a theory
 does not exist at the moment.
\end{remark}

\section{Regularity properties of the group of bisections}
\label{sec:regularity_properties_of_the_group_of_bisections}

\begin{tabsection}
 This section contains an investigation of regularity properties for the Lie
 group of bisections. Throughout this section $\cG$ denotes a locally convex
 and locally metrisable Lie groupoid with compact space of objects $M$ that
 admits an adapted local addition. Moreover, we identify throughout this
 section the Lie algebra $\Lf(\Bis(\cG))$ with $-\Gamma(\Lf(\cG))$ via the
 isomorphism $\varphi_{\cG}$ from Theorem \ref{theorem: LAlg}.
 
 We will give two completely different proofs of the $C^{k}$-regularity of
 $\Bis(\cG)$ in the case of a locally trivial Lie groupoid and in the case of a Banach-Lie
 groupoid. While the argument in the locally continuous case is geometric in
 nature (and rather elementary), the argument in the case of Banach-Lie
 groupoids is analytical.
\end{tabsection}

\begin{theorem}\label{thm:ltriv_reg}
 Let $\cG = (G \toto M)$ be a Lie groupoid and $k \in \N_0 \cup \{\infty\}$.
 Assume that $G$ is a locally trivial Lie groupoid with locally exponential and
 $C^{k}$-regular vertex group and compact $M$. Then the Lie group $\Bis (\cG)$
 is $C^k$-regular. In particular, the Lie group $\Bis (\cG)$ is regular in the
 sense of Milnor.
\end{theorem}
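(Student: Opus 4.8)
The plan is to reduce to a gauge groupoid and then propagate $C^{k}$-regularity along the extension of Lie groups established in Example \ref{exmp:gauge_and_automorphism_group}. Since $\cG$ is locally trivial it is isomorphic over $M$ to the gauge groupoid $\frac{P\times P}{H}$ of a principal $H$-bundle $\pi\from P\to M$, whose structure group $H$ is the vertex group of $\cG$, hence locally exponential and $C^{k}$-regular (cf.\ \cite[\S 1.3]{Mackenzie05General-theory-of-Lie-groupoids-and-Lie-algebroids}). Both groupoids admit an adapted local addition, so the functor $\Bis$ of Remark \ref{rem:functorial_interpretation} applies and sends this isomorphism over $M$ to an isomorphism $\Bis(\cG)\cong\Bis(\frac{P\times P}{H})\cong\Aut(P)$ of Lie groups. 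Example \ref{exmp:gauge_and_automorphism_group} then provides the extension
\begin{equation*}
 \Gau(P)\cong\Bis\left(\frac{P\times_{M}P}{H}\right)\hookrightarrow\Bis\left(\frac{P\times P}{H}\right)\twoheadrightarrow\Diff(M)_{[P]},
\end{equation*}
which is moreover a locally trivial bundle, a smooth local section of $\beta_{*}$ having been constructed there. It therefore suffices to show that the kernel and the quotient are $C^{k}$-regular and to invoke the permanence of $C^{k}$-regularity under locally trivial extensions (cf.\ \cite{hg2012}).

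I would treat the quotient first, as the elementary end. The group $\Diff(M)_{[P]}=\im(\beta_{*})$ is an open subgroup of $\Diff(M)$, and for the compact manifold $M$ the group $\Diff(M)$ is classically $C^{0}$-regular, hence $C^{k}$-regular for every $k$ (cf.\ \cite{neeb2006}). Any $C^{k}$-evolution is a continuous curve issuing from $\one$, so its image is connected and lies in the identity component, which is contained in the open subgroup $\Diff(M)_{[P]}$; thus regularity of $\Diff(M)$ restricts to $\Diff(M)_{[P]}$.

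The kernel is the geometric heart of the argument. Here $\Gau(P)=\Bis(\frac{P\times_{M}P}{H})$ is the section group of the associated Lie group bundle $\frac{P\times H}{H}\to M$ with fibre $H$, and its Lie algebra is $\Gamma(M\xleftarrow{}\frac{P\times L(H)}{H})$. Given a $C^{k}$-curve $\gamma$ into this Lie algebra I would solve the evolution equation $\eta'(t)=\gamma(t).\eta(t)$ fibrewise: over each $m\in M$ the curve $\gamma(\cdot)(m)$ lands in a fibre isomorphic to $L(H)$, and $C^{k}$-regularity of $H$ yields a solution $\eta_{m}(\cdot)=\Evol_{H}(\gamma(\cdot)(m))$. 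Working in a local trivialisation $\frac{P\times H}{H}|_{U}\cong U\times H$ turns this into a family of initial value problems in $H$ depending smoothly on the base parameter, so smoothness of $\evol_{H}$ (part of regularity of $H$) together with smooth dependence of evolutions on parameters gives a smooth local solution; on overlaps the transition cocycle acts by the inner automorphisms $\op{conj}(k_{ji})$, under which $\Evol_{H}$ is equivariant, so the local solutions patch to a global smooth section $\eta(t)$. Compactness of $M$ and the exponential law underlying the mapping-space manifold structure (Theorem \ref{thm: MFDMAP}) then upgrade this pointwise construction to smoothness of $\evol$ on $C^{k}([0,1],L(\Gau(P)))$, establishing $C^{k}$-regularity of $\Gau(P)$.

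The main obstacle is precisely this last step: transferring $C^{k}$-regularity from the fibre group $H$ to the section group uniformly enough to make $\evol$ smooth, while keeping track of the twist by the transition functions of $P$. Once both ends are $C^{k}$-regular, the extension permanence result yields $C^{k}$-regularity of $\Bis(\frac{P\times P}{H})\cong\Bis(\cG)$; taking $k=\infty$ gives regularity in the sense of Milnor.
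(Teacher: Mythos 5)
Your proposal follows the same skeleton as the paper's proof: reduce to the gauge groupoid of a principal $H$-bundle using local triviality, pass to the extension $\Bis\bigl(\frac{P\times_{M}P}{H}\bigr)\hookrightarrow\Bis\bigl(\frac{P\times P}{H}\bigr)\twoheadrightarrow\im(\beta_{*})$ from Example \ref{exmp:gauge_and_automorphism_group}, establish $C^{k}$-regularity of kernel and quotient, and conclude by permanence of $C^{k}$-regularity under extensions (the paper cites \cite[Appendix B]{NeebSalmasian12Differentiable-vectors-and-unitary-representations-of-Frechet-Lie-supergroups} for this step, you cite \cite{hg2012}; either is fine). Your treatment of the quotient also matches the paper: it is an open subgroup of the $C^{k}$-regular group $\Diff(M)$, so evolutions stay inside it. The one point of divergence is the kernel. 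The paper identifies $\Bis\bigl(\frac{P\times_{M}P}{H}\bigr)$ with the gauge group $\Gau(P)$ and simply cites \cite{Glockner13Regularity-properties-of-infinite-dimensional-Lie-groups} for its $C^{k}$-regularity, whereas you sketch a direct proof: solve the evolution equation fibrewise in local trivialisations, patch using the equivariance of $\Evol_{H}$ under the (constant-in-time) conjugations by the transition functions, and upgrade to smoothness of $\evol$ by exponential laws and compactness of $M$. This strategy is sound and is essentially the proof lying behind the cited result, but as written it stops exactly where the real work begins: the smoothness of $\evol$ on $C^{k}([0,1],L(\Gau(P)))$, which you yourself flag as ``the main obstacle'', is the entire analytic content of the gauge-group statement and would require a careful argument with the $C^{r,s}$-calculus of \cite{alas2012}, the initial topology on sections with respect to a finite trivialising cover, and the exponential laws of Theorem \ref{thm: MFDMAP}. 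So either cite the gauge-group regularity result, as the paper does, which makes your argument complete and essentially identical to the paper's, or be prepared to expand the fibrewise sketch into a substantial technical proof of comparable length to Section \ref{Appendix: ODE}.
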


\begin{proof}
 First note that $\cG$ is isomorphic over $M$ to the gauge groupoid of a
 principal $K$-bundle $P\to M$, where $K$ is the vertex group of $\cG$. So we
 may assume without loss of generality that $\cG=(\frac{P\times P}{H}\toto M)$.
 We consider the extension
 \begin{equation*}
  \Bis\left(\frac{P\times_{M} P}{H}\right)\hookrightarrow \Bis\left(\frac{P\times P}{H}\right)
  \twoheadrightarrow \im(\beta_{*})
 \end{equation*}
 of Lie groups from Example \ref{exmp:gauge_and_automorphism_group}. As
 explained in Example \ref{exmp:gauge_and_automorphism_group} the Lie group
 $\Bis\left(\frac{P\times_{M} P}{H}\right)$ is isomorphic to the gauge group
 $\Gau(P)$. This isomorphism is even an isomorphism of locally metrisable Lie
 groups since it maps smooth curves to smooth curves. From
 \cite{Glockner13Regularity-properties-of-infinite-dimensional-Lie-groups} it
 now follows that $\Bis\left(\frac{P\times_{M} P}{H}\right)$ is
 $C^{k}$-regular, as well as $\im(\beta_{*})$ (the latter is just an open
 subgroup of $\Diff(M)$). Since $C^{k}$-regularity is an extension property
 \cite[Appendix
 B]{NeebSalmasian12Differentiable-vectors-and-unitary-representations-of-Frechet-Lie-supergroups}
 it follows that also $\Bis\left(\frac{P\times P}{H}\right)$ is
 $C^{k}$-regular, what we were after to show.
\end{proof}

\begin{setup}\label{setup:
 diffeq} Let $\cG$ be a Lie groupoid. Define the map
 \begin{displaymath}
  f \colon [0,1] \times G \times C^0 ([0,1] , \Gamma (\Lf(\cG)))_{\text{c.o.}} \rightarrow
  T^\alpha G,\quad (t,g, \eta) \mapsto TR_g \eta^\wedge (t,\beta (g)) \coloneq
  TR_g\eta (t) (\beta (g)).
 \end{displaymath}
 This map makes sense, since \eqref{eq: RI:VF} shows for fixed
 $(t,\eta) \in [0,1] \times C^0 ([0,1] , \Gamma (\Lf(\cG)))$ that
 $f(t,\cdot, \eta)$ is the right-invariant vector field associated to
 $\eta (t)$ and thus takes its values in $T^\alpha G$. We now consider the
 parameter dependent initial value problem:
 \begin{equation}\label{eq:diffeq}
  \begin{cases}
  x'(t)  &= f (t,x(t),\eta) =TR_{x(t)} \eta^\wedge (t,\beta (x(t))),\\
  x(t_0) &= g_0, \quad \quad (t_0,g_0) \in [0,1] \times G
  \end{cases}
 \end{equation}
 To prove the regularity of $\Bis (\cG)$ we will study the flow of the
 differential equation.
\end{setup}

Recall the following definition of $C^{r,s}$-mappings from \cite{alas2012}.

\begin{setup}
 Let $E_1$, $E_2$ and $F$ be locally convex spaces, $U$ and $V$ open subsets of
 $E_1$ and $E_2$, respectively, and $r,s \in \N_0 \cup \{\infty\}$. A mapping
 $f\colon U \times V \rightarrow F$ is called a $C^{r,s}$-map if for all
 $i,j \in \N_0$ such that $i \leq r, j \leq s$, the iterated directional
 derivative
 \begin{displaymath}
  d^{(i,j)}f(x,y,w_1,\dots,w_i,v_1,\dots,v_j) \coloneq (D_{(w_i,0)} \cdots
  D_{(w_1,0)}D_{(0,v_j)} \cdots D_{(0,v_1)}f ) (x,y)
 \end{displaymath}
 exists for all
 $ x \in U, y \in V, w_1, \ldots , w_i \in E_1,  v_1, \ldots ,v_j \in E_2$ and
 yields continuous maps
 \begin{align*} 
  d^{(i,j)}f\colon   & U \times V \times E^i_1 \times E^j_2 \rightarrow F,\\ 
  &(x,y,w_1,\dots,w_i,v_1,\dots,v_j)\mapsto (D_{(w_i,0)} \cdots D_{(w_1,0)}D_{(0,v_j)} \cdots D_{(0,v_1)}f ) (x,y).
 \end{align*}
 One can extend the definition of $C^{r,s}$-maps to mappings on locally convex
 domains with dense interior (cf.\ Definition \ref{defn: nonopen}). 
 In addition, there are chain rules for $C^{r,s}$-mappings allowing us to naturally extend the notion of $C^{r,s}$-maps
 to maps defined on products of locally convex manifolds with values in a locally convex manifold.  
\end{setup}

\begin{tabsection}
 For further results and details on the calculus of $C^{r,s}$-maps we refer to
 \cite{alas2012}.
\end{tabsection}

\begin{proposition}\label{proposition:
 flow}
 \begin{enumerate}
  \item \label{proposition: flow_a} The map $f$ from \ref{setup: diffeq} is of
        class $C^{0,\infty}$ with respect to the splitting\\
        $[0,1] \times (G \times C^0 ([0,1] , \Gamma (\Lf(\cG)))_{\text{c.o.}})$ and satisfies
        \begin{displaymath}
         f(t,g,\eta) \in T_g^\alpha G \text{ for }(t,g,\eta)\in [0,1] \times G
         \times C^0 ([0,1] , \Gamma (\Lf(\cG))).
        \end{displaymath}
 \end{enumerate}
 \noindent Assume in addition that $G$ is a Banach-manifold. Then the following
 holds:
 \begin{enumerate}\setcounter{enumi}{1}
  \item \label{proposition: flow_b} There is a zero-neighbourhood
        $\Omega \opn \Gamma (\Lf(\cG))$ such that for every
        $(t_0, g_0 ,\eta) \in [0,1] \times G \times C^0 ([0,1] , \Omega)$ the
        initial value problem \eqref{eq:diffeq} admits a unique maximal
        solution $\varphi_{t_0,g_0,\eta} \colon [0,1] \rightarrow G$. Here we
        defined
        $C^0([0,1] ,\Omega) \coloneq \left\{X \in C^0 ([0,1] , \Gamma (\Lf(\cG))) \middle| X([0,1])\subseteq \Omega \right\}\opn C^0 ([0,1] , \Gamma (\Lf(\cG)))_{\text{c.o.}}$.\\
        Hence, we obtain the \emph{flow} of \eqref{eq:diffeq} as
        \begin{displaymath}
         \Fl^f \colon [0,1] \times [0,1] \times G \times C^0([0,1] ,\Omega)
         \rightarrow G, (t_0,t,g_0,\eta) \mapsto \varphi_{t_0,g_0,\eta} (t).
        \end{displaymath}
  \item \label{proposition: flow_c} The map
        $\Fl^f_0 \coloneq \Fl^f (0,\cdot ) \colon [0,1] \times (G \times C^0([0,1] ,\Omega)) \rightarrow G, (t,g,\eta) \mapsto \Fl^f (0,t,g,\eta)$
        is of class $C^{1,\infty}$.
  \item \label{proposition: flow_d}Fix
        $(s,t,\eta) \in [0,1] \times [0,1] \times C^0([0,1] ,\Omega)$ then the
        map
        $\beta \circ \Fl^f (s,t, \cdot , \eta) \circ 1 \colon M \rightarrow M$
        is a diffeomorphism.
  \item \label{proposition: flow_e} Fix $\eta \in C^0 ([0,1] , \Omega)$, then
        $H_\eta \colon [0,1] \times M \rightarrow G , (t,x) \mapsto \Fl^f_0 (t, 1_x, \eta)$
        is a $C^{1,\infty}$-mapping which induces a $C^1$-map
        \begin{displaymath}
         c_\eta \colon [0,1] \rightarrow \Bis (\cG) , t \mapsto \Fl^f_0 (t,
         \cdot,\eta).
        \end{displaymath}
 \end{enumerate}
\end{proposition}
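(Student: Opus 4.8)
The plan is to handle the five assertions in order, using right-invariance of $f$ and compactness of $M$ as the structural backbone and isolating the genuine analytic input (smooth parameter-dependence of flows) as the hard step. For \ref{proposition: flow_a} I would factor $f$ as $f(t,g,\eta)=\overrightarrow{\eta(t)}(g)$, i.e.\ as the composite of the evaluation $\op{ev}\from C^0([0,1],\Gamma(\Lf(\cG)))_{\text{c.o.}}\times[0,1]\to\Gamma(\Lf(\cG))$, $(\eta,t)\mapsto\eta(t)$, with the right-invariant-field map $\Gamma(\Lf(\cG))\times G\to T^\alpha G$, $(X,g)\mapsto\overrightarrow{X}(g)=TR_g(X(\beta(g)))$. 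The evaluation is linear and continuous in $\eta$ but merely continuous in $t$, hence of class $C^{\infty,0}$ in $(\eta,t)$ (equivalently $C^{0,\infty}$ for the splitting with $t$ first); the right-invariant-field map is smooth, being built from $\beta$, $TR$ and the jointly smooth evaluation of sections $\Gamma(\Lf(\cG))\times M\to\Lf(\cG)$, and is $C^\infty(G)$-linear in $X$ by \eqref{eq: RI:VF}. Composing via the chain rules for $C^{r,s}$-maps shows $f$ is $C^{0,\infty}$, and $f(t,g,\eta)\in T^\alpha_g G$ is immediate from \eqref{eq: RI:VF}.

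For \ref{proposition: flow_b} the key observation is that $f$ is right-invariant: if $x(\cdot)$ solves \eqref{eq:diffeq} then so does $x(\cdot)\cdot g$ for composable $g$, since $\overrightarrow{\eta(t)}(hg)=TR_g(\overrightarrow{\eta(t)}(h))$, and $\alpha(x(t))$ is constant because $f$ is $\alpha$-vertical. Hence the solution through an arbitrary $g_0$ is $\varphi_{t_0,1_{\beta(g_0)},\eta}(\cdot)\cdot g_0$, so existence on all of $[0,1]$ need only be verified for initial values in the compact set $1(M)$. As $f$ is continuous in $t$ and smooth (hence locally Lipschitz) in the state on the Banach manifold $G$, the Banach ODE-with-parameters theory yields local existence, uniqueness, and an open flow domain. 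For $\eta=0$ one has $f\equiv0$ with constant solutions on $[0,1]$ for every initial value; by openness of the flow domain and compactness of $[0,1]^2\times 1(M)$ there is an open $\Omega\ni0$ in $\Gamma(\Lf(\cG))$ with $[0,1]^2\times 1(M)\times C^0([0,1],\Omega)$ inside the domain. Right-invariance then extends existence on $[0,1]$ to all $g_0\in G$, defining $\Fl^f$.

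Assertion \ref{proposition: flow_c}, the $C^{1,\infty}$-regularity of $\Fl^f_0$, is the genuine analytic heart and the step I expect to be the main obstacle: it asserts that the flow of the $C^{0,\infty}$ time-dependent vector field $f$ depends $C^1$ on time and $C^\infty$ on the pair (state, parameter $\eta$). I would obtain it from the general theorem on parameter-dependence of solutions of ODEs on Banach manifolds in the $C^{r,s}$-calculus of \cite{alas2012}; the gain of one derivative in time reflects that $f$ is $C^0$ in $t$, so solutions are $C^1$ in $t$. This is exactly the technical point best deferred to a separate, self-contained treatment.

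For \ref{proposition: flow_d} I would push the equation forward by $\beta$. Writing $b(t)\coloneq\beta(\varphi_{s,g_0,\eta}(t))$ and using $\beta\circ R_g=\beta$ together with the definition of the anchor $a_{\Lf(\cG)}=T\beta|_{\Lf(G)}$, one computes $b'(t)=T\beta(\overrightarrow{\eta(t)}(\varphi(t)))=a_{\Lf(\cG)}(\eta(t))(b(t))$. Thus $b$ is an integral curve of the time-dependent vector field $X_\eta(t)\coloneq a_{\Lf(\cG)}\circ\eta(t)\in\Gamma(TM)$ on the \emph{compact} manifold $M$; such a field is complete and its evolution maps are diffeomorphisms of $M$. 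Since $b(s)=\beta(1_x)=x$ for $g_0=1_x$, the map $\beta\circ\Fl^f(s,t,\cdot,\eta)\circ1$ is precisely the time-$s$-to-$t$ evolution of $X_\eta$, hence a diffeomorphism.

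Finally, for \ref{proposition: flow_e} the map $H_\eta(t,x)=\Fl^f_0(t,1_x,\eta)$ is $C^{1,\infty}$ as the composite of $\Fl^f_0$ (part \ref{proposition: flow_c}) with the smooth map $(t,x)\mapsto(t,1_x,\eta)$. By the exponential law for $C^{r,s}$-maps \cite{alas2012} it corresponds to a $C^1$-curve $c_\eta\colon[0,1]\to C^\infty(M,G)$, $t\mapsto\Fl^f_0(t,\cdot,\eta)\circ1$. Each $c_\eta(t)$ is a section of $\alpha$ (the flow preserves $\alpha$-fibres) with $\beta\circ c_\eta(t)\in\Diff(M)$ by \ref{proposition: flow_d}, so $c_\eta(t)\in\Bis(\cG)$; since $\Bis(\cG)$ is an open subset of the submanifold $\Gamma(M\xleftarrow{\alpha}G)$ of $C^\infty(M,G)$, the curve is $C^1$ into $\Bis(\cG)$.
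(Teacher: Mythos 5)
Your plan for \ref{proposition: flow_a} and \ref{proposition: flow_c} coincides with the paper's (evaluation maps plus the $C^{r,s}$ chain rules, then \cite[Proposition 5.13]{alas2012}), and your argument for \ref{proposition: flow_d} --- pushing solutions forward by $\beta$ and identifying $\beta \circ \Fl^f(s,t,\cdot,\eta)\circ 1$ with the evolution of the time-dependent vector field $a_{\Lf(\cG)}\circ\eta(t)$ on the compact manifold $M$ --- is a genuinely different and sound alternative to the paper's purely algebraic computation with the flow identities. However, in parts \ref{proposition: flow_b} and \ref{proposition: flow_e} you invoke ``standard'' results that are not available in the generality required, and these two invocations hide precisely the technical core of the paper's proof. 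In \ref{proposition: flow_b} you appeal to ``Banach ODE-with-parameters theory'' for an \emph{open flow domain} jointly in $(t_0,t,g_0,\eta)$ and then finish by a tube-lemma argument around the compact set $[0,1]^2\times 1(M)\times\{\mathbf{0}\}$. But the parameter $\eta$ ranges in $C^0([0,1],\Gamma(\Lf(\cG)))_{\text{c.o.}}$, an infinite-dimensional locally convex space which is neither Banach nor locally compact; the parameter cannot be absorbed into the Banach state space, and neither \cite{amann1990} nor \cite[5.12]{alas2012} (which only produces a maximal solution for each fixed $(t_0,g_0,\eta)$) gives openness of the domain in the parameter direction. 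Openness in $\eta$ requires Lipschitz and boundedness estimates for $f$ that are \emph{uniform on a neighbourhood of} $\mathbf{0}$ in this function space, and without local compactness such uniformity does not follow from continuity of $f$; it must be extracted from the $C^{0,\infty}$-property. That extraction is exactly the content of the paper's Lemma \ref{lemma: cont:dep} and Lemma \ref{lemma: Lipschitz} (via \cite[Proposition 6.3]{amann1990} and a compactness/subbasis argument in the compact-open topology), after which the paper runs a quantitative argument with \cite[7.4]{amann1990}, arranging the sup-norm bound so that solutions cannot leave a ball of radius $R_{x_0}$ before time $1$. As written, your step \ref{proposition: flow_b} assumes what has to be proved; it can be repaired, but only by supplying essentially these lemmas.

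The same problem occurs in \ref{proposition: flow_e}: you pass from the $C^{1,\infty}$-map $H_\eta\from [0,1]\times M\to G$ to a $C^1$-curve $c_\eta\from[0,1]\to C^\infty(M,G)$ by ``the exponential law for $C^{r,s}$-maps''. The exponential law \cite[Theorem 4.6 (d)]{alas2012} concerns maps with values in a \emph{locally convex space}; what you need is a mixed-regularity exponential law for maps with values in the (possibly infinite-dimensional) manifold $G$, and no such result is cited or available. This is exactly why the paper's proof of \ref{proposition: flow_e} is long: it first proves continuity of $c_\eta$ using that the topology on $C^\infty(M,G)$ is initial with respect to $f\mapsto T^nf$, and then proves the $C^1$-property locally in the submanifold charts of $\Bis(\cG)$ from Proposition \ref{Proposition: SectMFD}, covering $M$ by finitely many trivialising patches (Wallace's theorem) so that the vector-space-valued exponential law becomes applicable chart by chart. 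A smaller instance of the same habit appears in \ref{proposition: flow_a}: smoothness of $(X,g)\mapsto TR_g(X(\beta(g)))$ is not immediate, since $g\mapsto R_g$ is not itself a map one can differentiate; the paper makes this precise by writing $TR_g(v)=Tm(v,0(g))$ and using the multiplication of the tangent groupoid $T\cG$, a detail your factorisation should spell out.
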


We postpone the rather technical proof of Proposition \ref{proposition: flow}
to Section \ref{Appendix: ODE}.

\begin{theorem}\label{theorem:
 Banach:reg} Let $\cG = (G \toto M)$ be a Lie groupoid and assume that $G$ is a
 Banach manifold and that $M$ is compact. Then the Lie group $\Bis (\cG)$ is
 $C^k$-regular for each $k \in \N_0 \cup \{\infty\}$. In particular, the Lie group
 $\Bis (\cG)$ is regular in the sense of Milnor.
\end{theorem}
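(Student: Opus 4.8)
The plan is to establish $C^{0}$-regularity of $\Bis(\cG)$; since a $C^{0}$-regular Lie group is automatically $C^{s}$-regular for every $s\in\N_{0}\cup\{\infty\}$, this settles $C^{k}$-regularity for all $k$ simultaneously, and in particular regularity in the sense of Milnor. Throughout I identify $\Lf(\Bis(\cG))$ with $\Gamma(\Lf(\cG))$ via $\varphi_{\cG}$ from Theorem \ref{theorem: LAlg}. To avoid a clash with the action map $\gamma$ from Remark \ref{rem:structure_on_bisections}\ref{rem:structure_on_bisections_c}, I write $\eta$ for the Lie-algebra valued curve (the ``$\gamma$'' of \eqref{eq: regular}). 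Using $v.\sigma=T_{\one}\rho_{\sigma}(v)$, the evolution equation $\eta'(t)=\eta(t).\sigma(t)$ is driven by the time-dependent right-invariant vector field on $\Bis(\cG)$ with value $\eta(t)$ at $\one$; by Proposition \ref{proposition: related} this field is $\gamma$-related to the right-invariant vector field $g\mapsto TR_{g}\,\eta(t)(\beta(g))$ on $G$ from \eqref{eq: RI:VF}, which is precisely the right-hand side of the initial value problem \eqref{eq:diffeq}.

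First I would make this correspondence precise and produce $\Evol(\eta)$. Evaluating the action at the unit arrow gives $\gamma(\sigma,1_{x})=\sigma(\beta(1_{x}))1_{x}=\sigma(x)$, so a curve $c\from[0,1]\to\Bis(\cG)$ with $c(0)=\one$ solves $c'(t)=\eta(t).c(t)$ if and only if, for each $x\in M$, the curve $t\mapsto c(t)(x)$ solves \eqref{eq:diffeq} with $t_{0}=0$ and $g_{0}=1_{x}$. By Proposition \ref{proposition: flow}\ref{proposition: flow_b} these solutions exist and are unique for $\eta\in C^{0}([0,1],\Omega)$, and by Proposition \ref{proposition: flow}\ref{proposition: flow_d},\ref{proposition: flow_e} the assignment $x\mapsto\Fl^{f}_{0}(t,1_{x},\eta)$ is a genuine bisection, assembling into the $C^{1}$-curve $c_{\eta}\from[0,1]\to\Bis(\cG)$. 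Hence $c_{\eta}$ is the required $C^{1}$-evolution $\Evol(\eta)$, with $c_{\eta}(0)=\one$; uniqueness of $\Evol(\eta)$ follows from the pointwise uniqueness of solutions of \eqref{eq:diffeq} on the Banach manifold $G$ together with the fact that a bisection is determined by its values.

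Next I would prove smoothness of $\evol$ on $\Omega$-valued curves. Evaluating at the terminal time, $\evol(\eta)=\Evol(\eta)(1)=\bigl(x\mapsto\Fl^{f}_{0}(1,1_{x},\eta)\bigr)$. Since $\Fl^{f}_{0}$ is of class $C^{1,\infty}$ by Proposition \ref{proposition: flow}\ref{proposition: flow_c}, fixing $t=1$ yields a map that is smooth ($C^{\infty}$) in the product slot $(g,\eta)$; precomposing the smooth object inclusion $1\from M\to G$ in the $g$-variable gives a jointly smooth map $M\times C^{0}([0,1],\Omega)\to G$, $(x,\eta)\mapsto\Fl^{f}_{0}(1,1_{x},\eta)$. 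By the exponential law for manifolds of mappings (Theorem \ref{thm: MFDMAP}) this corresponds to a smooth map $C^{0}([0,1],\Omega)\to C^{\infty}(M,G)$ with values in the submanifold $\Bis(\cG)$; that is, $\evol$ is smooth on $\Omega$-valued curves.

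Finally, I would globalise from $C^{0}([0,1],\Omega)$ to all of $C^{0}([0,1],\Gamma(\Lf(\cG)))$. For an arbitrary continuous $\eta$, its image is compact, hence bounded, in the modelling space $\Gamma(\Lf(\cG))$, so after subdividing $[0,1]$ into finitely many subintervals and rescaling, each piece is carried into $\Omega$; the global evolution is recovered from the evolutions over the pieces through the (smooth) group multiplication on $\Bis(\cG)$, whence existence and smoothness of $\evol$ propagate to the whole space. I expect the main obstacle to lie precisely in this assembly step: converting the $C^{1,\infty}$-regularity of the flow furnished by Proposition \ref{proposition: flow} into honest smoothness of $\evol$ as a map of infinite-dimensional manifolds via the exponential law, and organising the subdivision and rescaling so that the concatenated curve is again the unique $C^{1}$-evolution. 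The genuinely hard analytic content---existence, uniqueness, and $C^{1,\infty}$-dependence of the flow on the Banach manifold $G$---is packaged into Proposition \ref{proposition: flow}, whose proof is deferred.
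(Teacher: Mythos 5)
Your proposal is correct and takes essentially the same route as the paper: both rest on Proposition \ref{proposition: flow}, identify $c_\eta$ as the product integral of $\eta$ (the paper's ``claim'', proved via the isomorphism $\Phi_{M,G}$), and obtain smoothness of $\evol$ on $C^0([0,1],\Omega)$ by combining the $C^{1,\infty}$-property of the flow with the exponential law of Theorem \ref{thm: MFDMAP}. The only divergence is your final subdivision-and-rescaling step: the paper obtains this local-to-global passage by citing \cite[Proposition 1.3.10]{Dahmen2012}, which packages exactly the argument you sketch, so the step you flag as the main remaining obstacle is in fact a known criterion for $C^0$-regularity.
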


\begin{proof}
 Let $\Omega \opn \Gamma (\Lf(\cG)) = T_1 \Bis (\cG) = \Lf(\Bis (\cG))$ be the
 zero-neighbourhood constructed in Proposition \ref{proposition: flow}
 \ref{proposition: flow_b}. Combine Proposition \ref{proposition: flow}
 \ref{proposition: flow_c} and \ref{proposition: flow_d} with Theorem \ref{thm:
 MFDMAP} \ref{thm:manifold_structure_on_smooth_mapping_d} to obtain a smooth
 map
 \begin{displaymath}
  \evol \colon C^0 ([0,1] , \Omega) \rightarrow \Bis (\cG) ,\quad \eta
  \mapsto \Fl^f_0 (1, \cdot, \eta ) \circ 1 = c_\eta (1).
 \end{displaymath}
 We claim that $c_\eta \colon [0,1] \rightarrow \Bis (\cG)$ is the product
 integral of $\eta \colon [0,1] \rightarrow \Omega \subseteq \Lf(\Bis (\cG))$,
 i.e.\ it solves the initial value problem (cf.\ \eqref{eq: regular})
 \begin{displaymath}
  \begin{cases}
   \gamma' (t) &= T_1 \rho_{\gamma (t)} (\eta (t)) = \eta (t).\gamma(t)\\
   \gamma (0) &= 1
  \end{cases}
  .
 \end{displaymath}
 If this is true, then the proof can be completed as follows: For each
 $\eta \in C^0([0,1], \Omega) \opn C^0([0,1], \Gamma (\Lf(\cG)))$ there is a
 product integral and the evolution $\evol$ is smooth. Then $\Bis (\cG)$ is
 $C^0$-regular by \cite[Proposition 1.3.10]{Dahmen2012}. Since $C^0$-regularity
 implies $C^k$-regularity for all $k \geq 0$ the assertion follows.
 
 \textbf{Proof of the claim:} Fix $\eta \in C^0([0,1] , \Omega)$ and observe
 that $c_\eta \colon [0,1] \rightarrow \Bis (\cG)$ is a $C^1$-curve by
 Proposition \ref{proposition: flow} \ref{proposition: flow_e}. Furthermore,
 $c_\eta (0) = H_\eta (0,\cdot) = \Fl^f_0 (0, \cdot,\eta) \circ 1 = 1 \in \Bis (\cG)$.
 Let us now compute the derivative $\frac{\partial}{\partial t} c_\eta (t)$ for
 fixed $t \in [0,1]$. To this end choose a smooth curve
 $k \colon ]-\varepsilon,\varepsilon[ \rightarrow \Bis (\cG)$ (for some
 $\varepsilon >0$) with $k(0)=1$ and $k'(0)=\eta(t) \in T_1 \Bis (\cG)$. Recall
 that
 $\Phi_{M,G} \colon TC^\infty (M,G) \rightarrow C^\infty(M,TG), \eqclass{t\mapsto h(t)} \mapsto (m\mapsto \eqclass{t\mapsto h^\wedge (t,m)})$
 is an isomorphism of vector bundles by Theorem \ref{thm:tangent_map_of_pull_back_and_push_forward}.
 Therefore, we can
 compute the derivative as follows:
 \begin{align*}
  \frac{\partial}{\partial t} c_\eta (t) &\stackrel{\hphantom{\eqref{eq:diffeq}}}{=} \eqclass{s \mapsto c_\eta (t+s)} = \Phi^{-1}_{M,G} \left(m\mapsto \eqclass{s\mapsto \Fl^f_0 (t+s,1_m,\eta)}\right)\\
  &\stackrel{\eqref{eq:diffeq}}{=} \Phi_{M,G}^{-1} \left(m \mapsto TR_{\Fl^f_0 (t,1_m,\eta)} \eta^\wedge (t,\beta (\Fl^f_0 (t,1_m,\eta)))\right)\\
  &\stackrel{\hphantom{\eqref{eq:diffeq}}}{=}  \Phi_{M,G}^{-1} \left(m \mapsto TR_{c_\eta^\wedge (t,m)} \circ \eta^\wedge (t,\cdot) \circ (\beta \circ c_\eta)^\wedge (t,m)))\right)\\
  &\stackrel{\hphantom{\eqref{eq:diffeq}}}{=}  \Phi^{-1}_{M,G} \left(m\mapsto \eqclass{s \mapsto R_{c_\eta^\wedge (t,m)} \circ k^\wedge (s,\cdot) \circ \beta \circ c_\eta^\wedge (t,m)}\right) \\
  &\stackrel{\hspace{2pt}\eqref{eq: BISGP1}\hspace{2pt}}{=}  \Phi^{-1}_{M,G} \left(m\mapsto \eqclass{s \mapsto ((\rho_{c_\eta (t)} \circ k)^\wedge (s,m)}\right) = \eqclass{s \mapsto \rho_{c_\eta (t)} \circ k (s)} = T_1 \rho_{c_\eta (t)} (\eqclass{s\mapsto k (s)})\\ 
  &\stackrel{\hphantom{\eqref{eq:diffeq}}}{=} T_1 \rho_{c_\eta (t)} \eta (t)
 \end{align*}
 As $t \in [0,1]$ was arbitrary, $c_\eta$ is the product integral for
 $\eta \colon [0,1] \rightarrow \Lf(\Bis (\cG))$.
\end{proof}

\section{Proof of Proposition \ref{proposition: flow}}\label{Appendix: ODE}

In this section we exhibit the technical proof of Proposition
\ref{proposition: flow}. Let us first recall its content:

\begin{setup}
 \begin{enumerate}
  \item The map $f$ from \ref{setup: diffeq} is of class $C^{0,\infty}$ with
        respect to the splitting
        $[0,1] \times (G \times C^0 ([0,1] , \Gamma (\Lf(\cG)))_{\text{c.o.}})$ and satisfies
        \begin{displaymath}
         f(t,g,\eta) \in T_g^\alpha G \text{ for }(t,g,\eta)\in [0,1] \times G \times C^0 ([0,1] , \Gamma (\Lf(\cG))).
        \end{displaymath}
 \end{enumerate}\noindent
 Assume in addition that $G$ is a Banach-manifold. Then the following holds:
 \begin{enumerate}
  \item[b)] There is a zero-neighbourhood $\Omega \opn \Gamma (\Lf(\cG))$
        such that for every
        $(t_0, g_0 ,\eta) \in [0,1] \times G \times C^0 ([0,1] , \Omega)$ the
        initial value problem \eqref{eq:diffeq} admits a unique maximal
        solution $\varphi_{t_0,g_0,\eta} \colon [0,1] \rightarrow G$. Here we
        defined
        $C^0([0,1] ,\Omega) \coloneq \left\{X \in C^0 ([0,1] , \Gamma (\Lf(\cG))) \middle| X([0,1])\subseteq \Omega \right\}\opn C^0 ([0,1] , \Gamma (\Lf(\cG)))_{\text{c.o.}}$.\\
        Hence, we obtain the \emph{flow} of \eqref{eq:diffeq} as
        \begin{displaymath}
         \Fl^f \colon [0,1] \times [0,1] \times G \times  C^0([0,1] ,\Omega) \rightarrow G, (t_0,t,g_0,\eta) \mapsto \varphi_{t_0,g_0,\eta} (t).
        \end{displaymath}
  \item[c)] The map
        $\Fl^f_0 \coloneq \Fl^f (0,\cdot ) \colon [0,1] \times (G \times C^0([0,1] ,\Omega)) \rightarrow G, (t,g,\eta) \mapsto \Fl^f (0,t,g,\eta)$
        is of class $C^{1,\infty}$.
  \item[d)] Fix
        $(s,t,\eta) \in [0,1] \times [0,1] \times C^0([0,1] ,\Omega)$ then the
        map
        $\beta \circ \Fl^f (s,t, \cdot , \eta) \circ 1 \colon M \rightarrow M$
        is a diffeomorphism.
  \item[e)] Fix $\eta \in C^0 ([0,1] , \Omega)$, then
        $H_\eta \colon [0,1] \times M \rightarrow G , (t,x) \mapsto \Fl^f_0 (t, 1_x, \eta)$
        is a $C^{1,\infty}$-mapping which induces a $C^1$-map
        \begin{displaymath}
         c_\eta \colon [0,1] \rightarrow \Bis (\cG) , t \mapsto \Fl^f_0 (t, \cdot,\eta).
        \end{displaymath}
 \end{enumerate}
\end{setup}

\begin{proof}[of
 Proposition \ref{proposition: flow}]
 \begin{enumerate}
  \item To prove that $f$ is a $C^{0,\infty}$-map, define first an auxiliary
        map
        \begin{displaymath}
         f_0 \colon [0,1] \times M \times C^0 ([0,1], \Gamma
         (\Lf(\cG)))_{\text{c.o.}} \rightarrow T^\alpha G , (t,x,\eta) \mapsto
         \eta^\wedge (t,x) = \eta (t) (x).
        \end{displaymath}
        We will first prove that $f_0$ is of class $C^{0,\infty}$ with respect
        to the splitting
        $[0,1] \times (M \times C^r ([0,1] , \Gamma (\Lf(\cG)))_{\text{c.o.}})$.
        To this end consider the evaluation maps
        $\op{ev}_0 \colon C^0 ([0,1], \Gamma (\Lf(\cG)))_{\text{c.o.}} \times [0,1] \rightarrow \Gamma (\Lf(\cG)) , \op{ev}_0 (\eta, t) = \eta (t)$
        and
        $\widetilde{\op{ev}} \colon \Gamma (\Lf(\cG)) \times M \rightarrow T^\alpha G, \widetilde{\op{ev}} (X,y) = X(y)$.
        Clearly
        $f_0 (t,x,\eta) = \widetilde{\op{ev}} \circ (\op{ev}_0 (\eta ,t), g)$.
        Since $\op{ev}_0$ is a $C^{\infty,0}$-map by \cite[Proposition
        3.20]{alas2012}, a combination of the chain rules \cite[Lemma 3.17 and
        Lemma 3.19]{alas2012} for $C^{r,s}$-mappings shows that $f_0$ will be
        $C^{0,\infty}$ if $\widetilde{\op{ev}}$ is smooth. To see that
        $\widetilde{\op{ev}}$ is smooth we compute in bundle charts. Consider
        the vector bundle $\pi_\alpha \colon T^\alpha G \rightarrow G$ and
        denote its typical fibre by $E$, We choose a local trivialisation
        $\kappa \colon \pi_{\alpha}^{-1} (U_\kappa) \rightarrow U_\kappa \times E$
        such that $U_\kappa \cap 1 (M) \neq \emptyset$. By construction
        \ref{setup: algebroid}, the vector bundle $\Lf(G)\to M$ is the pullback
        bundle of $T^\alpha G$ over the embedding $1$. Hence $\kappa$ induces
        the trivialisation
        $1^*\kappa \colon (1*\pi_\alpha)^{-1} (1^{-1} (U_\kappa)) \rightarrow 1^{-1} (U_\kappa) \times E, Y \mapsto (1^*\pi_\alpha (Y), \kappa (\pi_\alpha^* 1 (Y)))$
        of $\Lf(G)\to M$. Shrinking $U_\kappa$ we may assume that
        $W \coloneq 1^{-1} (U_\kappa)$ is the domain of a manifold chart
        $(\psi, W)$ of $M$. Recall from \cite[Proposition 7.3 and Lemma
        5.5]{Wockel13Infinite-dimensional-and-higher-structures-in-differential-geometry}
        that the map
        $\theta_{\kappa,\psi} \colon \Gamma (\Lf(\cG)) \rightarrow C^\infty (\psi (W) , E) , X\mapsto \op{pr}_2 \circ 1^*\kappa \circ X \circ \psi^{-1}$
        is continuous linear, whence smooth. We obtain a commutative diagram
        with smooth columns
        \begin{equation*}
         \begin{aligned} \begin{xy}
         \xymatrix{
         \Gamma (\Lf(\cG)) \times W \ar[d]_{(\theta_{\kappa,\psi} \times \psi)} \ar[rr]^-{\widetilde{\op{ev}}|_{\Gamma (\Lf(\cG)) \times W}^{\pi_\alpha^{-1} (U_\kappa)}} &  &  \pi_\alpha^{-1} (U_\kappa) \\
         C^\infty(\psi(W),E) \times \psi (W) \ar[rr]^-{(1 \circ \op{pr}_2 , \op{ev})} & & U_\kappa \times E \ar[u]_{\kappa^{-1}}
         }
         \end{xy} \end{aligned}
        \end{equation*}
        where
        $\op{ev} \colon C^\infty (\psi (W) , E) \times \psi (W) \rightarrow E, (\lambda,x) \mapsto \lambda (x)$
        is the evaluation map. By \cite[Proposition 3.20]{alas2012} (which is
        applicable by \cite[Lemma
        5.3]{Wockel13Infinite-dimensional-and-higher-structures-in-differential-geometry})
        the map $\op{ev}$ is smooth. Furthermore, the local trivialisation
        $\kappa$ was chosen arbitrarily, the map $\widetilde{\op{ev}}$ is
        smooth and in conclusion $f_0$ is of class $C^{0,\infty}$.
        
        We claim
        that the following diagram makes sense and commutes:
        \begin{equation}\label{diag: smooth:ev}
         \begin{aligned} \begin{xy}
         \xymatrix{
         [0,1] \times G \times C^0 ([0,1], \Gamma (\Lf(\cG)))_{\text{c.o.}} \ar[rr]^-{f} \ar[d]_{(\id_{[0,1]} \times (\beta , \id_{G}) \times \id_{C^r([0,1],\Gamma (\Lf(\cG)))})} &  & TG \\
         [0,1] \times M \times C^0 ([0,1], \Gamma (\Lf(\cG)))_{\text{c.o.}} \times G \ar[rr]^-{f_0 \times 0} & & TG \times_{T\alpha,T\beta}TG \ar[u]_{Tm}
         }
         \end{xy} \end{aligned}
        \end{equation}
        Assume for a moment that the claim is true and \eqref{diag: smooth:ev}
        commutes. Then $f$ is a $C^{0,\infty}$-map by the chain rules for
        $C^{0,\infty}$ mappings and smooth maps \cite[Lemma 3.17 and Lemma
        3.18]{alas2012}. By construction (see \ref{setup: diffeq}) the map $f$
        factors through the split submanifold $T^\alpha G$, whence assertion
        a) follows.\bigskip
        
        \textbf{Proof of the claim:} Fix
        $(t,g,\eta) \in [0,1] \times G \times C^0([0,1], \Gamma (\Lf(\cG)))$.
        Notice first that the composition makes sense: By definition we have
        $f_0 (t,\beta (g),\eta) = T^\alpha_{1(\beta (g))} G$. Hence
        $T\alpha (f_0 (t,\beta (g),\eta)) = 0_{\beta (g)} = T\beta (0(g))$. In
        conclusion $f_0 \times 0$ factors through
        $TG \times_{T\alpha,T\beta}TG$. We are left to prove that \eqref{diag:
        smooth:ev} commutes. To see this we will use the explicit formula for
        the multiplication in the tangent prolongation. However, since $f_0$
        takes its image only in tangent spaces over units in $G$, the formula
        simplifies (cf.\ \cite[Theorem 1.4.14, eq.\
        (4)]{Mackenzie05General-theory-of-Lie-groupoids-and-Lie-algebroids}) to
        \begin{align*}
         Tm(f_0 (t,\beta(g),\eta),0 (g)) &= T(R_g) (f_0 (t,\beta(g),\eta)) - T(1)T\alpha (f_0 (t,\beta (g),\eta)) + 0(g) \\
         &= T(R_g) (\eta^\wedge (t,\beta (g))) - \underbrace{T(1)T\alpha (f_0 (t,\beta (g),\eta))}_{=0(1(\beta (g))}) = f(t,g,\eta).
        \end{align*}
  \item[b)] In a) we have seen that $f$ is a mapping of class $C^{0,\infty}$
        such that for all $g \in G$ we have $f(\cdot , g , \cdot) \in T_g G$.
        Now $G$ is a smooth Banach-manifold by assumption and the map $f$
        satisfies the assumptions of \cite[5.12]{alas2012}. Hence
        \cite[5.12]{alas2012} yields for all choices
        $(t_0, g_0,\eta) \in [0,1] \times G \times C^0([0,1],\Gamma (\Lf(\cG)))$
        a unique maximal solution
        $\varphi_{t_0,g_0,\eta} \colon J_{t_0,g_0,\eta} \rightarrow G$ of
        \eqref{eq:diffeq} defined on some (relatively) open interval
        $t_0 \in J_{t_0,g_0,\eta} \subseteq [0,1]$. We claim that it is
        possible to construct an zero-neighbourhood
        $\Omega \opn \Gamma (\Lf(\cG))$ such that for all
        $(t_0,g_0,\eta) \in [0,1] \times G \times C^0([0,1],\Omega)$ the
        maximal solution $\varphi_{t_0,g_0,\eta}$ is defined on $[0,1]$. If
        this is true, then the flow map $\Fl^{f}$ is defined on
        $[0,1] \times [0,1] \times (G \times  C^0([0,1],\Omega))$. \bigskip
        
        \textbf{Construction of $\Omega$:} We construct the neighbourhood
        $\Omega$ via a local argument in charts. Let us thus fix the following
        symbols for the rest of this proof:
        \begin{itemize}
         \item $F$ is the Banach-space on which $G$ is modelled,
         \item $E$ denotes the complemented subspace of $F$ on which
               $T^\alpha G$ is modelled (cf.\ \ref{setup: alpha:sbd}).
        \end{itemize}
        \textit{Step 1: Reduction to initial values in $M$.} First recall that
        for $t \in J_{t_0,g_0,\eta}$ the following holds
        \begin{equation}\label{eq: Icurv:inv:VF}
         \frac{\partial}{\partial t}\varphi_{t_0,g_0,\eta} (t) = f(t, \varphi_{t_0,g_0,\eta} (t) , \eta) = TR_{\varphi_{t_0,g_0,\eta} (t)} \eta^\wedge (t, \beta (\varphi_{t_0,g_0,\eta} (t))) 
         \stackrel{\eqref{eq: RI:VF}}{=} \overrightarrow{\eta (t)}(\varphi_{t_0,g_0,\eta} (t)).
        \end{equation}
        We conclude that $\varphi_{t_0,g_0,\eta}$ is an integral curve for the
        time-dependent right-invariant vector field\\
        $\overrightarrow{\eta} \colon [0,1] \rightarrow \SectRI{G} , t \mapsto \overrightarrow{\eta (t)}$.
        Arguing as in \cite[p. 132
        3.6]{Mackenzie05General-theory-of-Lie-groupoids-and-Lie-algebroids} we
        derive the following information on the integral curves: Recall that
        $\overrightarrow{\eta (t)}$ is $\alpha$-vertical for each
        $t \in [0,1]$. Thus \eqref{eq: Icurv:inv:VF} yields for $g_0 \in G$ the
        equation $\alpha \circ \varphi_{t_0,g_0,\eta} = \alpha (g_0)$. In
        particular, the integral curve through $1_{\beta (g_0)}$ restricts to a
        mapping
        $J_{t_0,1_{\beta (g_0)},\eta} \rightarrow \alpha^{-1} (\beta (g_0))$.
        Thus
        $c \colon J_{t_0, 1_{\beta (g_0)},\eta} \rightarrow G, t \mapsto R_{g_0} \circ \varphi_{t_0,1_{\beta (g_0)},\eta} (t)$
        is defined. A quick computation shows that $c(t_0) = g_0$ and by
        \eqref{eq: Icurv:inv:VF} we derive
        $\frac{\partial}{\partial t} c(t) = \overrightarrow{\eta (t)}(c(t))$.
        This proves that $c(t)$ is the solution of \eqref{eq:diffeq} passing
        through $g_0$ at time $0$. We conclude that it suffices to construct a
        zero-neighbourhood $\Omega$ such that the solutions
        $\varphi_{t_0,1_{x},\eta}$ are defined on $[0,1]$ for all
        $(x,\eta) \in M \times C^0([0,1],\Omega)$.\bigskip 
        
        \textit{Step 2:
        Integral curves on $[0,1]$ for all initial values in a neighbourhood of
        $x_0 \in M$.} Fix $x_0 \in M$. We choose a submersion chart
        $\kappa_{x_0} \colon U_{x_0} \rightarrow V_{x_0} \subseteq F$ for
        $\alpha$ whose domain contains $1_{x_0}$. Then the tangent chart
        $T\kappa_{x_0}$ is a submersion chart for $T\alpha$, whence it
        restricts to the bundle $\pi_\alpha \colon T^\alpha G \to G$ \ and
        yields a bundle trivialisation
        $T^\alpha \kappa_{x_0} \colon \pi_{\alpha}^{-1} (U_{x_0}) \rightarrow V_{x_0} \times E$
        defined via
        $T^\alpha \kappa_{x_0} = T\kappa_{x_0}|_{\pi_{\alpha}^{-1} (U_{x_0})}^{V_{x_0} \times E}$.
        We remark that $T^\alpha \kappa_{x_0}$ induces a trivialisation of the
        pullback bundle $\Lf(G)\to M$. In the following we will identify
        $\Lf(G)\to M$ with the restriction of $T^\alpha G$ to $1(M)$ and the
        fibres $\Lf(G)_x$ with $T^\alpha_{1_x} G$. Under this identification
        the pullback trivialisation is just the restriction of
        $T^\alpha \kappa_{x_0}$ to $\pi_\alpha^{-1} (1(M))$.
        
        Note that the map $1\circ \beta$ fixes units. 
        Hence, by replacing $U_{x_0}$ with the
        open set $U_{x_0} \cap (1\circ \beta)^{-1} (U_{x_0}) \ni 1_{x_0}$ for
        each $g \in U_{x_0}$ the unit $1_{\beta (g)}$ is also contained in
        $U_{x_0}$. Denote the (smooth) inclusion of $E$ into $F$ by $I_E^F$. We
        define the map
        \begin{equation}\label{eq: hx0}
         h_{x_0} \colon [0,1] \times (V_{x_0} \times C^0 ([0,1] , \Gamma (\Lf(\cG)))_{\text{c.o.}}) \rightarrow F , (t,x,\eta) \mapsto I_E^F \circ \op{pr}_2 \circ T^\alpha \kappa_{x_0} \circ f  (t , \kappa_{x_0}^{-1} (x),\eta)
        \end{equation}
        which is of class $C^{0,\infty}$ by the chain rules \cite[Lemma 3.17
        and Lemma 3.18 ]{alas2012} and part (a). For later use we record that
        for all $X \in \pi_\alpha^{-1} (U_{x_0}) \subseteq TG$ we have
        $I_E^F \circ \op{pr}_2 \circ T^\alpha \kappa_{x_0} (X) = \pr_2 \circ T\kappa_{x_0} (X)$.
        Note that since $1\circ \beta (U_{x_0}) \subseteq U_{x_0}$, we can
        rewrite $h_{x_0} (t,y , \eta)$ for fixed
        $(t,y) \in [0,1] \times V_{x_0}$ as
        \begin{align*}
         h_{x_0} (t,y , \eta )  &= \pr_2 \circ T\kappa_{x_0}  \circ f  (t , \kappa_{x_0}^{-1} (y),\eta) 
         = \pr_2 \circ T\kappa_{x_0} \circ TR_{\kappa_{x_0}^{-1} (y)} (\underbrace{\eta^\wedge (t,\beta (\kappa_{x_0}^{-1} (y)))}_{\in \pi_\alpha^{-1} (U_{x_0})}) \\
         &= \pr_2 \circ T\kappa_{x_0}  \circ TR_{\kappa_{x_0}^{-1} (y)} \circ T^\alpha \kappa_{x_0}^{-1} \circ T^\alpha \kappa_{x_0} (\eta^\wedge (t,\beta (\kappa_{x_0}^{-1} (y)))) \\ 
         &= \pr_2 \circ T\kappa_{x_0}  \circ TR_{\kappa_{x_0}^{-1} (y)} \circ T^\alpha \kappa_{x_0}^{-1} (\kappa_{x_0} (1_{\beta (\kappa_{x_0}^{-1} (y))}) , \pr_2 T^\alpha \kappa_{x_0} (\eta^\wedge (t,\beta (\kappa_{x_0}^{-1} (y))))).
        \end{align*}
        Hence we obtain for each $y \in V_{x_0}$ a continuous linear map
        \begin{displaymath}
         l_{x_0,y} \colon E \rightarrow F, l_{x_0,y} (\omega) = \pr_2 \circ
         T\kappa_{x_0} \circ TR_{\kappa_{x_0}^{-1} (y)} \circ T^\alpha
         \kappa_{x_0}^{-1} (\kappa_{x_0} (1_{\beta (\kappa^{-1}_{x_0} (y))}) ,
         \omega).
        \end{displaymath}
        Set $z_0 \coloneq \kappa_{x_0} (1_{x_0})$. By Lemma \ref{lemma:
        cont:dep} the assignment
        $l_{x_0} \colon V_{x_0} \rightarrow \BoundOp{E,F} , y \mapsto l_{x_0,y}$
        is continuous. Hence we obtain an open $z_0$-neighbourhood
        $W_{z_0} \subseteq V_{x_0}$ such that
        $\sup_{w \in W_{z_0}} \opnorm{l_{x_0, w}} \leq B_{x_0} \coloneq \opnorm{l_{x_0,z_0}} +1$.
        Now $\beta$ is a submersion, whence an open map. We conclude from
        $\beta \kappa_{x_0}^{-1} (z_0) = x_0$ that
        $\beta \circ \kappa_{x_0}^{-1}(V_{x_0})$ is an open neighbourhood of
        $x_0$. Choose a compact $x_0$-neighbourhood
        $A_{x_0} \subseteq \beta (U_{x_0})$ and note that
        $z_0 \in (\beta \circ \kappa_{x_0}^{-1})^{-1} (A_{x_0}^\circ) \opn V_{x_0}$.
        Hence there is $R_{x_0}> 0$ with
        $\overline{B_{2R_{x_0}} (z_0)} \subseteq W_{z_0} \cap (\beta \circ \kappa_{x_0}^{-1})^{-1} (A_{x_0}^\circ))$.
        
        By Lemma \ref{lemma: Lipschitz} we can shrink $R_{x_0}$ and choose a
        zero-neighbourhood $N_{0} \opn \Gamma (\Lf(\cG))$ such that the map
        $h_{x_0}$ is uniformly Lipschitz continuous in the Banach-space
        component on
        $[0,1] \times C^0([0,1], N_0) \times \overline{B_{2R_{x_0}} (z_0)}$.
        Fix $v \in B_{R_{x_0}} (z_0)$ and estimate the supremum of the norm of
        $h_{x_0}$ on
        $[0,1] \times C^0([0,1] , N_0) \times \overline{B_{R_{x_0}} (v)}$. By
        choice of $v$ the ball $\overline{B_{R_{x_0}} (v)}$ is contained in
        $B_{2R_{x_0}} (z_0) \subseteq W_{x_0}$. Thus for all
        $y \in \overline{B_{R_{x_0}} (v)}$ we have the upper bound $B_{x_0}$
        for $\opnorm{l_{x_0,y}}$. We obtain an estimate the supremum of
        $\norm{h_{x_0} (t,y,\eta)}_F$ over
        $(t,y,\eta) \in [0,1] \times C^0([0,1] , N_0) \times \overline{B_{R_{x_0}} (v)}$
        as
        \begin{equation}\label{eq: est:supnorm}
         \begin{aligned}
         \sup_{(t,y,\eta)} \norm{h_{x_0} (t,y,\eta)}_F &\leq \sup_{(t,y,\eta)} \opnorm{l_{x_0, y}} \norm{\pr_2 T^\alpha \kappa_{x_0} \circ \eta^\wedge (t,\beta (\kappa_{x_0}^{-1} (y)))}_E\\
         &\leq B_{x_0} \sup_{(t,y,\eta)} \norm{\pr_2 T^\alpha \kappa_{x_0} \circ \eta^\wedge (t,\beta (\kappa_{x_0}^{-1} (y)))}_E.
         \end{aligned}
        \end{equation}
        By construction of $B_{2R_{x_0}} (z_0)$ we have
        $\beta \circ \kappa_{x_0}^{-1} (B_{2R_{x_0}} (z_0)) \subseteq A_{x_0}$
        and $A_{x_0}$ is a compact subset of $1^{-1} (U_{x_0}) \subseteq M$.
        Now $T^\alpha \kappa_{x_0}$ restricts to a trivialisation of the
        pullback bundle $\Lf(G)\to M$ (identified with a subset of
        $T^\alpha G$). Recall that $A_{x_0}$ is a compact set. Hence for each
        open set $O \subseteq E$ by definition of the compact open topology and
        Definition \ref{def:smooth_compact_open_topology} the set
        $\lfloor A_{x_0} , O\rfloor \coloneq \{f \in C^\infty (\beta (U_{x_0}), E)\mid f(A_{x_0}) \subseteq O\}$
        is open in $C^\infty (\beta (U_{x_0}),E)$. Let
        $\res \colon \Gamma (\Lf (\cG)) \rightarrow \Gamma (\left.\Lf (\cG)\right|_{\beta (U_{x_0})}), f \mapsto f|_{\beta (U_{x_0})}$.
        Then a combination of \cite[Lemma
        5.5]{Wockel13Infinite-dimensional-and-higher-structures-in-differential-geometry}
        with Theorem \ref{thm: sect} implies that
        \begin{displaymath}
         D \coloneq \left\{X \in \Gamma (\Lf(\cG)) \middle| \sup_{y \in
         A_{x_0}} \norm{\pr_2 \circ T^\alpha \kappa_{x_0} \circ X (y)}_E <
         \frac{R_{x_0}}{B_{x_0}}\right\} = ((\pr_2 \circ T^\alpha
         \kappa_{x_0})_* \circ \res)^{-1}\left(B_{\frac{R_{x_0}}{B_{x_0}}}
         (0)\right)
        \end{displaymath}
        is an open neighbourhood of the zero-section in $\Gamma (\Lf(\cG))$.
        
        Define $\Omega_0 \coloneq D \cap N_0$. Clearly
        $\Omega_0 \opn \Gamma (\Lf(\cG))$ is a zero-neighbourhood and
        $C^0([0,1], \Omega_0)$ is open in
        $C^0([0,1], \Gamma (\Lf(\cG)))_{\text{c.o.}}$. From \eqref{eq:
        est:supnorm} we derive that for all $v \in B_{R_{x_0}} (z_0)$ and
        $\eta \in C^0([0,1], \Omega_{x_0})$ the estimate:
        \begin{equation}\label{eq: final:estimate}
         \sup_{(t,y,\eta) \in [0,1] \times B_{R_{x_0}}(v) \times  C^0([0,1], \Omega_{x_0})} \norm{h_{x_0} (t,y,\eta)}_F < R_{x_0}
        \end{equation}
        We will now solve the initial value problem \eqref{eq:diffeq} for fixed
        $\eta \in  C^0([0,1], \Omega_{x_0})$ and
        $(t_0,v) \in [0,1] \times B_{R_{x_0}} (z_0)$:
        \begin{equation}\label{eq: ODE:loc}
         \begin{cases}
         \frac{\partial}{\partial t}c(t) = h_{x_0} (t,c(t),\eta), \\
         c(t_0) = v.
         \end{cases}
        \end{equation}
        From the argument above, we know that
        $h_{x_0} (\cdot, \eta) \colon [0,1] \times B_{2R_{x_0}} (z_0) \rightarrow F$
        satisfies a uniform Lipschitz condition in the Banach space component
        (i.e.\ in $ B_{2R_{x_0}} (z_0)$). In addition, \eqref{eq:
        final:estimate} shows that for all $\eta \in  C^0([0,1], \Omega_{x_0})$
        we obtain
        $M \coloneq \sup_{(t, x) \in [0,1] \times \overline{B_{R_{x_0} (v)}}} \norm{h_{x_0} (t,x,\eta)}_E < R_{x_0}$.
        Observe that $\frac{R_{x_0}}{M} \geq 1$. Now a combination of \cite[7.4
        Local Existence and Uniqueness Theorem]{amann1990} with \cite[Remark
        7.10 (a)]{amann1990} shows that the initial value problem \eqref{eq:
        ODE:loc} admits a unique solution
        $c_{t_0,v,\eta}\colon [0,1] \rightarrow B_{R_{x_0}} (v)$.
        \footnote{Note that the proof of \cite[7.4]{amann1990} for
        infinite-dimensional Banach spaces requires a uniform Lipschitz
        condition on all of $\overline{B_{2R_{x_0}}(z_0)}$ (cf.\ \cite[Remark
        7.5 (b)]{amann1990}).} It is easy to see that
        $\kappa_{x_0}^{-1} \circ c_{t_0,v,\eta}$ is just the integral curve
        $\varphi_{t_0,\kappa_{x_0}^{-1}(v), \eta}$, whence this curve must
        exist on $[0,1]$. Hence for all
        $(t_0, y,\eta) \in [0,1] \times \kappa_{x_0} (B_{R_{x_0}} (z_0)) \times  C^0([0,1], \Omega_{x_0})$
        the initial value problem \eqref{eq:diffeq} admits a unique solution
        $\varphi_{t_0,y,\eta}$ on $[0,1]$. Finally we remark that
        $\kappa_{x_0} (B_{R_{x_0}} (z_0))$ is an open neighbourhood of
        $1_{x_0}$. \bigskip
        
        \textit{Step 3: Define $\Omega$.} We construct for
        each $x \in M$ as in Step 2 an open neighbourhood $\cN_x \subseteq G$
        of $x$ and a zero-neighbourhood $\Omega_x \opn \Gamma (\Lf(\cG))$. By
        construction the solution $\varphi_{t_0,y,\eta}$ of \eqref{eq:diffeq}
        exists on $[0,1]$ for all
        $(t_0,y,\eta) \in [0,1] \times \cN_x \times C^0 ([0,1],\Omega_x)$.
        Since $M$ is compact, there is a finite set
        $x_1, \ldots, x_n \in M , n \in \N$ such that
        $G_{x_0} \subseteq \bigcup_{1\leq i\leq n} \cN_{x_i}$. Then
        $\Omega \coloneq \bigcap_{1\leq i \leq n} \Omega_{x_i}$ is an open
        zero-neighbourhood in $\Gamma (\Lf(\cG)))$. By construction for all
        $(t_0,x,\eta) \in [0,1] \times M \times C^0([0,1],\Omega)$ the solution
        $\varphi_{t_0,1_x,\eta}$ of \eqref{eq:diffeq} exists on $[0,1]$.
  \item[c)] Let $r \in \N_0 \cup \{ \infty \}$. For
        $\eta \in C^0 ([0,1] , \Omega)$ we derive from (b) that the integral
        curves for \eqref{eq:diffeq} exist on $[0,1]$. From (a) we know that
        $f$ is of class $C^{0,\infty}$. Hence \cite[Proposition 5.13]{alas2012}
        implies that for fixed $t_0 \in [0,1]$ the map
        $\Fl^f (t_0, \cdot) \colon [0,1] \times (G \times C^0 ([0,1] , \Gamma (\Lf(\cG)))_{\text{c.o.}}) \rightarrow G$
        is a mapping of class $C^{1,\infty}$. Specialising to $t_0 = 0$ we see
        that $\Fl^f_0$ is of class $C^{1,\infty}$.
  \item[d)] Fix $\eta \in C^0([0,1],\Gamma (\Lf(\cG)))$ and $s,t \in [0,1]$.
        We have to prove that
        $\Psi_{s,t} \coloneq \beta \circ \Fl^f (s,t , \cdot , \eta) \circ 1 \colon M \rightarrow M$
        is a diffeomorphism. The map $\Psi_{s,t}$ is smooth as a composition of
        smooth mappings. We claim that the inverse of $\Psi_{s,t}$ is given by
        $\Psi_{t,s} \coloneq \beta \circ \Fl^f (t,s,\cdot , \eta) \circ 1$. To
        see this recall the following properties of the flow for
        $t_0,t_1,t_2 \in [0,1]$
        \begin{align} 
         \Fl^f (t_1,t_2 , \cdot ,\eta) \circ \Fl^f (t_0,t_1, \cdot , \eta) = \Fl^f (t_0,t_2, \cdot , \eta) \quad \text{ and } \quad  \Fl^f (t_0,t_0, \cdot , \eta) = \id_{G} (\cdot) \label{eq: flow1}\\
         \forall (h,g) \in G \times_{\alpha ,\beta} G, \text{ by (b) and (c) Step 1: }  R_g \circ \Fl^f (t_0,t_1, h,\eta) = \Fl^f (t_0, t_1, \cdot , \eta) \circ R_g (h) \label{eq: flow2}
        \end{align}
        Furthermore, we observe for $g\in G$ that $R_g^{-1} = R_{g^{-1}}$.
        Hence for $x \in M$ a combination of \eqref{eq: flow1} and \eqref{eq:
        flow2} yields
        $R^{-1}_{\Fl^f (t,s,1_x,\eta)} (1_x) = \Fl^f (s,t, \cdot ,\eta) \circ R^{-1}_{\Fl^f (t,s,1_x,\eta)} \circ \Fl^f (t,s,\cdot,\eta) (1_x) =  \Fl^f (s,t, \cdot ,\eta) (1_{\beta (\Fl^f (t,s,1_x,\eta))})$.
        Together with $\beta (R_g (h)) = \beta (h)$ for all
        $(h,g) \in G \times_{\alpha,\beta} G$, the last observation enables the
        following computation:
        \begin{align*}
         x = \beta (1_x) &= \beta \circ R^{-1}_{\Fl^f (t,s,1_x,\eta)} (1_x) = \beta \circ \Fl^f (s,t, \cdot ,\eta) \circ R^{-1}_{\Fl^f (t,s,1_x,\eta)} \circ \Fl^f (t,s,\cdot ,\eta) (1_x)\\
         &= \beta \circ \Fl^f (s,t, \cdot ,\eta) \circ 1 \circ \beta \circ \Fl^f (t,s,\cdot,\eta) (1_x) = \Psi_{s,t} \circ \Psi_{t,s} (x)
        \end{align*}
        Interchanging the roles of $\Psi_{s,t}$ and $\Psi_{t,s}$ we see that
        indeed $\Psi_{s,t}^{-1} = \Psi_{t,s}$ and $\Psi_{s,t}$ is a
        diffeomorphism.
  \item[e)] The map $1 \colon M \rightarrow G$ is smooth. Thus the chain rule
        \cite[Lemma 3.17]{alas2012} and (c) show that\\
        $H_\eta = \Fl^f_0 (\cdot,\eta) \circ \id_{[0,1]} \times 1$ is a
        $C^{1,\infty}$-map. From \eqref{eq: Icurv:inv:VF} and step 1 in (a) we
        infer $\alpha \circ c_{\eta} (t) = \id_{M}$. Then a combination of
        \cite[1.7]{hg2012} and (d) shows that $c_\eta (t) \in \Bis (\cG)$ and
        thus $c_\eta \colon [0,1] \rightarrow \Bis (\cG)$ makes sense.
        
        To see that $c_\eta$ is continuous, recall from Definition
        \ref{def:smooth_compact_open_topology} that the topology on
        $C^\infty (M,G)$ is initial with respect to the family
        $(T^n \colon C^\infty (M,G) \rightarrow C^0(T^nM,T^nG)_{\text{c.o.}} , f \mapsto T^nf)_{n\in \N_0}$.
        Now $\Bis (\cG)$ is an embedded submanifold whence the topology on
        $\Bis (\cG)$ is the subspace topology induced by $C^\infty (M,G)$.
        Clearly $c_\eta$ will be continuous if for all $n \in \N_0$ the map
        $T^n \circ c_\eta$ is continuous. Observe that $c_\eta^\wedge = H_\eta$
        and the mapping $H_\eta \colon [0,1] \times M \rightarrow G$ is of
        class $C^{1,\infty}$. For $\xi \in T^nM$ and $t \in [0,1]$ fixed, we
        obtain the formula $T^nc_\eta (t) (\xi) = T^n H_\eta (t,\cdot) (\xi)$
        (where we compute the tangent only with respect to the argument in
        $M$). We compute locally to exploit the $C^{1,\infty}$-property of
        $H_\eta$: Choose an open $t$-neighbourhood $U_t \opn [0,1]$ with
        inclusion $i_t \colon U_t \hookrightarrow [0,1]$ and charts $\kappa$ of
        $M$ and $\lambda$ of $G$ such that
        $\lambda \circ H_\eta \circ (i_t \times \kappa)$ is defined. Then we
        see (cf.\ \cite[Lemma
        5.3]{Wockel13Infinite-dimensional-and-higher-structures-in-differential-geometry})
        that
        \begin{equation}\label{eq: Tnloc}
         T^n\lambda \circ T^n H_\eta (t,\cdot) \circ T^n \kappa =T^n(\lambda \circ H_\eta (t,\cdot) \circ \kappa) = T^{n-1} (\lambda \circ H_\eta (t,\cdot) \circ \kappa) \times (d T^{n-1}(\lambda \circ H_\eta (t,\cdot) \circ \kappa)).
        \end{equation}
        Denote by $\R^k$ the model space of $M$ and let
        $f \colon \R^k \supseteq U \rightarrow F$ be a smooth mapping from an
        open subset into a locally convex space. Recall from \cite[p.
        49]{hg2002a} the following variant of the usual differential for $f$:
        Set $d^0f=f$, $d^1f=df$ and
        $d^nf = d(d^{n-1}f) \colon U \times (\R^k)^{2^n-1} \rightarrow F$. Note
        that by \cite[Lemma 1.14]{hg2002a} these differentials exist for any
        $C^{\infty}$-map $f$. Thus \eqref{eq: Tnloc} shows that we can
        recursively split the tangent into a product of derivatives
        $d^r (\lambda \circ H_\eta (t,\cdot) \circ \kappa)$ composed with
        projections. Furthermore, the formula in the proof of \cite[Lemma
        1.14]{hg2002a} and the $C^{1,\infty}$-property of $H_\eta$ show that
        $d^r (\lambda \circ H_\eta (t,\cdot) \circ \kappa_2)$ depends
        continuously on $t$. In conclusion,
        $T^{0,n}H_\eta \colon [0,1] \times T^n M \rightarrow T^n G, (t,\xi) \mapsto T^n H_\eta (t,\cdot) (\xi)$
        is continuous. The manifold $M$ is finite dimensional and so is $T^nM$
        for all $n \in \N_0$. In particular, $T^n M$ is locally compact for
        $n\in \N_0$ and we derive from \cite[Theorem 3.4.1]{Engelking1989} that
        $T^nc_\eta = (T^{0,n}H_\eta)^\vee$ is continuous.
        
        We will prove now that $c_\eta$ is of class $C^1$. 
        It suffices to prove that $c_\eta$ is
        locally of class $C^1$. To do so fix $s \in [0,1]$ and recall some
        facts from the construction of $\Omega$ in (b). The set $\Omega$ was
        constructed with respect to a finite family
        $\cN_i \opn G, 1 \leq i \leq n$ which satisfies:
        \begin{itemize}
         \item[(i)] $1(M) \subseteq \bigcup_{1\leq i\leq n} \cN_i$,
         \item[(ii)] For each $1 \leq i \leq n$ there is a manifold chart
               $\kappa_i \colon U_i \rightarrow V_i \subseteq F$, such that
               $\Fl^f (0, \cdot )|_{[0,1] \times \cN_i \times C^0 ([0,1] , \Omega)}$
               takes its values in $U_i$ (cf.\ (b) Step 2). Moreover,
               $\kappa_i$ is a submersion chart for $\alpha$, whence \\
               $T^\alpha \kappa_i \coloneq T\kappa_i|_{T^\alpha U_i}^{E} \colon T^\alpha U_i \rightarrow U_i \times E$
               is a trivialisation of $T^\alpha G$.
        \end{itemize}
        Set $g \coloneq c_\eta (s) \in \Bis (\cG)$. Recall from Proposition
        \ref{Proposition: SectMFD} the form of a manifold chart around $g$ of
        $\Bis (\cG)$:
        $\varphi_{g} \colon O_{g} \rightarrow \varphi_g (O_g) \opn \Gamma (g^* T^\alpha G)$,
        where
        $O_g \coloneq \{s \in \Bis (\cG) \mid \forall x \in M,\ (s(x),g(x)) \in Q\}$
        for a a fixed neighbourhood $Q$ of the diagonal in $N\times N$.
        
        Consider the $C^{1,\infty}$-mapping
        $c_\eta^\wedge \coloneq H_\eta \colon [0,1] \times M \rightarrow G$.
        Then
        $(g \circ \pr_2 , c_\gamma^\wedge) \colon [0,1] \times M \rightarrow G \times G$
        is continuous with
        $\{s\} \times M \subseteq (g \circ \pr_2 , c_\gamma^\wedge)^{-1} (V) \opn [0,1] \times M$.
        Hence there is a relatively open interval $s \in J_s \subseteq [0,1]$
        such that $c_\eta (\overline{J_s}) \subseteq O_g$. We will prove that
        $\varphi_g \circ c_\eta|_{J_s} \colon J_s \rightarrow \Gamma (g^* T^\alpha G)$
        is $C^1$. Note first that $c_\eta (t)$ maps $1^{-1} (\cN_i) \opn M$
        into the chart domain $U_i$ for all $t \in J_s$. In particular, for
        $z \in 1^{-1} (\cN_i)$ the compact set
        $c_\eta^\wedge (\overline{J_s} \times \{z\}) \times \{g(z)\}$ is
        contained in $V\cap (U_i \times U_i)$. We apply Wallace Theorem
        \cite[3.2.10]{Engelking1989} to obtain open neighbourhoods
        $U_{z,s}, V_z \opn G$ with
        $c_\eta^\wedge (\overline{J_s} \times \{z\}) \times \{g(z)\} \subseteq U_{z,s} \times V_z \opn V\cap (U_i \times U_i)$.
        The set $(c_\eta^\wedge)^{-1} (U_{z,s})$ is an open neighbourhood of
        $\overline{J_s} \times \{z\} \in [0,1] \times 1^{-1} (\cN_i)$. Apply
        Wallace Theorem again to find an open $z$-neighbourhood
        $W_z \opn 1^{-1} (\cN_i)$ such that $g(W_z) \subseteq V_z$ and
        $J_s \times W_z \subseteq (c_\eta^\wedge)^{-1} (U_{z,s})$. By (i) the
        open sets $(1^{-1} (\cN_i))_{1\leq i \leq n}$ cover $M$. Thus we repeat
        the construction of $W_z$ for all $z \in M$. By compactness of $M$,
        there are finitely many $z_j , 1\leq j\leq m$ such that
        $M = \bigcup_{1 \leq j \leq m}W_{z_j}$. For each $1 \leq j \leq m$ we
        choose $\kappa_{i_j}$ such that
        $c_\eta (J_s \times W_{z_j}) \times g(W_{z_j}) \subseteq U_{z_j,s} \times V_{z_j} \subseteq U_{i_j} \times U_{i_j}$.
        Note that by (ii) the trivialisations $T^\alpha \kappa_{i_j}$ of
        $T^\alpha G$ induce an atlas of trivialisations for the bundle
        $g^* T^\alpha G$. From \cite[Proposition
        7.3]{Wockel13Infinite-dimensional-and-higher-structures-in-differential-geometry}
        we recall that the topology of $\Gamma (g^* T^\alpha G)$ is initial
        with respect to
        \begin{displaymath}
         \Phi \colon \Gamma (g^* T^\alpha G) \rightarrow \prod_{1\leq j \leq n}
         C^\infty (W_{z_j}, E) , \sigma \mapsto \pr_2 \circ T^\alpha
         \kappa_{i_j} \circ \sigma|_{W_{z_j}}
        \end{displaymath}
        and that $\Phi$ is a linear topological embedding with closed image.
        Thus $\varphi_g \circ c_\eta|_{J_s}$ will be of class $C^1$ if and only
        if for each $1 \leq j \leq m$ the map
        $p_j \circ \Phi \circ \varphi_g \circ c_\eta|_{J_s} \colon J_s \rightarrow C^\infty (W_{z_j}, E)$
        is $C^1$. Here $p_j$ is the projection onto the $j$-th component of the
        product. The manifold $W_{z_j} \opn M$ is finite dimensional and
        $J_s \opn [0,1]$ is a locally convex subset with dense interior of
        $\R$. We can thus apply the exponential law for $C^{1,\infty}$-maps
        \cite[Theorem 4.6 (d)]{alas2012}: The map
        $\varphi_g \circ c_\eta|_{J_s}$ will be of class $C^1$ if and only if
        for each $1\leq j \leq m$ the map
        $(\pr_j \circ \Phi \circ \varphi_g \circ c_\eta|_{J_s})^\wedge \colon J_s \times W_{z_j} \rightarrow E$
        is of class $C^{1,\infty}$. Let
        $\pi_\alpha \colon T^\alpha G \rightarrow G$ be the bundle projection
        and denote by $\A$ the local addition on $G$ adapted to $\alpha$. Using
        the description of the chart $\varphi_g$ we compute an explicit formula
        for $(\pr_j \circ \Phi \circ \varphi_g \circ c_\eta|_{J_s})^\wedge$:
        \begin{equation}\label{eq: flow:loc} \begin{aligned}
         (\pr_j \circ \Phi \circ \varphi_g \circ c_\eta|_{J_s})^\wedge (t,x) &=  (\pr_2 T^\alpha \kappa_{i_j} \circ (\pi_\alpha, \A)^{-1} \circ (g,c_\eta|_{J_s}))^\wedge (t,x)\\
         &=  (\pr_2 T^\alpha \kappa_{i_j} \circ (\pi_\alpha, \A)^{-1} \circ (g,c_\eta (t)) (x)) \\
         &=  \pr_2 T^\alpha \kappa_{i_j} \circ  (\pi_\alpha, \A)^{-1} \circ (g  \circ \pr_{W_{z_j}} ,c_\eta^\wedge|_{J_s\times W_{z_j}}) (t,x)   
         \end{aligned}
        \end{equation}
        Here $\pr_{W_{z_j}} \colon J_s \times W_{z_j} \rightarrow W_{z_j}$ is
        the canonical (smooth) projection. By construction of the open sets
        $W_{z_j}$, the smooth mapping
        $\pr_2 T^\alpha \kappa_i \circ  (\pi_\alpha, \A)^{-1} \circ (\id_{U_{z,s}}, g)$
        is defined on the product $U_{z_j , s} \times W_{z_j}$. Furthermore
        $c_\eta^\wedge|_{J_s\times W_{z_j}} = H_\eta|_{J_s \times W_{z_j}}$
        holds and thus $c_\eta^\wedge|_{J_s\times W_{z_j}}$ is of class
        $C^{1,\infty}$. We have
        $c_\eta^\wedge (J_s \times W_{z_j}) \subseteq U_{z_j,s}$. Computing in
        local charts, the chain rule \cite[Lemma 3.19]{alas2012} together with
        \eqref{eq: flow:loc} implies that\\
        $(\pr_j \circ \Phi \circ \varphi_g \circ c_\eta|_{J_s})^\wedge$ is a
        mapping of class $C^{1,\infty}$. We conclude that
        $c_\eta|_{J_s} \colon J_s \rightarrow \Bis (\cG)$ is of class $C^1$,
        whence the assertion follows.
 \end{enumerate}
\end{proof}

\begin{lemma}\label{lemma:
 cont:dep} In the situation of Proposition \ref{proposition: flow}
 \ref{proposition: flow_b} denote by $F$ the model space of $G$ and by $E$ the
 typical fibre of $T^\alpha G$. Let $\kappa \colon U \rightarrow V \subseteq F$
 be a submersion chart for $\alpha$ such that for all $g \in U$ we have
 $1_{\beta (g)} \in U$. Furthermore, we denote by $T^\alpha \kappa$ the
 trivialisation of the bundle $T^\alpha G$ obtained by restriction of $T\kappa$
 to $T^\alpha G$. Then the map
 \begin{displaymath}
  l \colon V \rightarrow \BoundOp{E,F} , y \mapsto l_{y} \quad \text{ with } l_{y}(\omega) = \pr_2 \circ T\kappa \circ TR_{\kappa^{-1} (y)} \circ T^\alpha \kappa^{-1} (\kappa (1_{\beta (\kappa^{-1}(y))}) , \omega)
 \end{displaymath}
 is continuous with respect to the operator-norm topology on $\BoundOp{E,F}$.
\end{lemma}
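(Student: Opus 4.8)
The plan is to deduce the statement from the joint smoothness of the defining formula together with a general principle that upgrades a $C^1$-dependence to operator-norm continuity. First I would consider the map
\[
 \Lambda \colon V \times E \to F, \quad (y,\omega) \mapsto l_y(\omega) = \pr_2 \circ T\kappa \circ TR_{\kappa^{-1}(y)} \circ T^\alpha \kappa^{-1}\bigl(\kappa(1_{\beta(\kappa^{-1}(y))}),\omega\bigr).
\]
Each constituent is smooth on the Banach manifold $G$: the chart maps $\kappa,T\kappa$ and the trivialisation $T^\alpha\kappa$ together with its inverse are smooth, the unit map $1$ and the projection $\beta$ are smooth (and $1_{\beta(\kappa^{-1}(y))}\in U$ by hypothesis, so $\kappa$ may be applied), and $R_{\kappa^{-1}(y)}$ is a partial application of the smooth multiplication $m$, so $TR_{\kappa^{-1}(y)}$ applied to vectors depending smoothly on $(y,\omega)$ is smooth in $(y,\omega)$. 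Hence $\Lambda$ is smooth, in particular $C^1$, and it is linear in $\omega$. It therefore suffices to establish the following general fact: if $\Lambda\colon V\times E\to F$ is a $C^1$-map between open subsets of Banach spaces which is linear in its second argument, then $y\mapsto l_y\coloneq\Lambda(y,\cdot)\in\BoundOp{E,F}$ is continuous for the operator norm.

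To prove this, fix $y_0\in V$ and choose $r>0$ with $\overline{B_r(y_0)}\subseteq V$. Write $D_1\Lambda\colon V\times E\times F\to F$ for the partial derivative in the first variable, which is continuous since $\Lambda$ is $C^1$. Because $\Lambda$ is linear in $\omega$, the map $D_1\Lambda$ is linear in $\omega$, and as a directional derivative it is linear in the direction $h$; thus $(\omega,h)\mapsto D_1\Lambda(y,\omega)(h)$ is bilinear. Evaluating at $(y_0,0,0)$ gives $0$, so by continuity there is $\delta\in(0,r]$ with $\norm{D_1\Lambda(y,\omega)(h)}\le 1$ whenever $\norm{y-y_0}\le\delta$, $\norm{\omega}\le\delta$ and $\norm{h}\le\delta$. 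Rescaling via the bilinearity yields the local bound
\[
 \norm{D_1\Lambda(y,\omega)(h)}\le\tfrac{1}{\delta^2}\,\norm{\omega}\,\norm{h}\quad\text{for all }\omega\in E,\ h\in F,\ \norm{y-y_0}\le\delta.
\]

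Finally I would conclude with the fundamental theorem of calculus for $C^1$-maps: for $\norm{y-y_0}<\delta$ the segment from $y_0$ to $y$ lies in $V$, and for every $\omega$ with $\norm{\omega}\le 1$,
\[
 l_y(\omega)-l_{y_0}(\omega)=\int_0^1 D_1\Lambda\bigl(y_0+t(y-y_0),\omega\bigr)(y-y_0)\,dt,
\]
whence $\norm{l_y(\omega)-l_{y_0}(\omega)}\le\sup_{t\in[0,1]}\norm{D_1\Lambda(y_0+t(y-y_0),\omega)(y-y_0)}\le\tfrac{1}{\delta^2}\norm{y-y_0}$. Taking the supremum over $\norm{\omega}\le 1$ gives $\opnorm{l_y-l_{y_0}}\le\tfrac{1}{\delta^2}\norm{y-y_0}\to 0$ as $y\to y_0$, which is the desired continuity at $y_0$. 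The main obstacle is precisely this last upgrade from the pointwise (strong) continuity of $y\mapsto l_y$, which is immediate from joint continuity of $\Lambda$, to continuity in the operator norm; the non-compactness of balls in the infinite-dimensional space $E$ blocks a naive compactness argument, and it is the bilinearity of $D_1\Lambda$ combined with the $C^1$-estimate above that supplies the necessary local uniform bound.
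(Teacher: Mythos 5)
Your proof is correct, but it follows a genuinely different route from the paper's. The paper first rewrites the right translation through the groupoid multiplication, $TR_{\kappa^{-1}(y)}(v)=Tm(v,0(\kappa^{-1}(y)))$, introduces the map $u(y)=(1_{\beta(\kappa^{-1}(y))},\kappa^{-1}(y))$ and a submanifold chart $\tau$ for the fibre product $G\times_{\alpha,\beta}G$, and thereby factors $l(y)=p(\tau\circ u(y))\circ z((\kappa\times\kappa)\circ u(y))$ into a composition of two operator-valued maps; the operator-norm continuity of each factor is then delegated to a citation of Milnor's Lemma 2.10 (for a smooth map $f$ between open subsets of Banach spaces, $x\mapsto df(x,\cdot)\in\BoundOp{E,F}$ is norm-continuous), and joint continuity of composition of bounded operators finishes the argument. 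You instead prove the analytic core yourself: your general principle --- a $C^1$ map $\Lambda$ that is linear in its second variable has operator-norm continuous, indeed locally Lipschitz, partial maps $y\mapsto\Lambda(y,\cdot)$ --- is established by the bilinearity of $D_1\Lambda$ in $(\omega,h)$, a rescaling bound near a zero of $D_1\Lambda$, and the fundamental theorem of calculus, all of which are valid in this setting. This principle is essentially equivalent to the lemma the paper cites (applying it to $\Lambda=df$, which is $C^1$ and linear in the direction whenever $f$ is $C^2$, recovers Milnor's statement), so your route is more self-contained, avoids presenting $l_y$ as a derivative of an auxiliary smooth map, and yields a quantitative Lipschitz estimate; the paper's route buys a shorter analytic argument at the price of the chart bookkeeping and an external reference. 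One caveat: your assertion that $\Lambda$ is jointly smooth is correct but too terse as stated, because $R_{\kappa^{-1}(y)}$ is only defined on the $\alpha$-fibre and varies with $y$; the rigorous justification is exactly the identity $TR_{\kappa^{-1}(y)}(v)=Tm(v,0(\kappa^{-1}(y)))$ combined with the identification of $T(G\times_{\alpha,\beta}G)$ with the split submanifold $TG\times_{T\alpha,T\beta}TG$ of $TG\times TG$ from Remark \ref{rem:structure_on_bisections} \ref{rem: tangentgpd}, which is what the first half of the paper's proof (the maps $u$ and $\tau$) is there to provide, so you should spell this step out.
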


\begin{proof}
 Before we tackle the continuity, we begin with some preliminaries: For each
 $y\in V$, the element $(1_{\beta (\kappa (y))} , \kappa^{-1} (y))$ is
 contained in the domain $G \times_{\alpha ,\beta} G$ of the groupoid
 multiplication $m$. Define the map
 $u \colon V \rightarrow G \times_{\alpha , \beta} G , v \mapsto (1_{\beta (\kappa^{-1} (v))}, \kappa^{-1} (v))$.
 As $G \times_{\alpha ,\beta} G$ is a split submanifold of $G \times G$
 (modelled on a complemented subspace $H$ of $F\times F$), the map $u$ is
 smooth as a mapping into the submanifold. Note that for all $g \in U$ the unit
 $1_{\beta (g)}$ is contained in $U$, whence
 $u (V) \subseteq (U\times U) \cap (G \times_{\alpha, \beta}G)$. It suffices to
 check continuity of $l$ locally. To this end fix $v \in V$. Let
 $\tau \colon U_\tau \rightarrow V_\tau \subseteq F\times F$ be a submanifold
 chart for $G \times_{\alpha ,\beta} G$ around $u(v)$, i.e.\
 $\tau (U_\tau \cap (G \times_{\alpha, \beta} G)) = V_\tau \cap H$ and
 $u(v) \in U_\tau$. Shrinking $U_\tau$ we can assume that
 $m (U_\tau \cap (G \times_{\alpha,\beta} G)) \subseteq U$ holds. We obtain an
 open $v$-neighbourhood $W_v \coloneq u^{-1} (U_\tau) \subseteq V \subseteq F$
 such that
 $u(W_v) \subseteq U_\tau \cap \left((U\times U) \times (G \times_{\alpha,\beta}G)\right)$.
 Now back to $l$: As explained in Proposition \ref{proposition: flow}
 \ref{proposition: flow_a}, we can rewrite the formula for $l$ for all
 $y \in W_v$ as follows
 \begin{equation}\label{eq: rewrite} 
  \begin{aligned}
  l (y) &= \pr_2 \circ T\kappa \circ Tm (T^\alpha \kappa^{-1} (\kappa (1_{\beta (\kappa^{-1}(y))}) , \omega) , 0(\kappa^{-1} (y))) \\ 
  &= \pr_2 \circ T\kappa \circ Tm (T^\alpha \kappa^{-1} \times T^\alpha \kappa^{-1} ((\kappa (1_{\beta (\kappa^{-1}(y))}),\omega)(y,0)))\\
  &= \pr_2 \circ T(\kappa \circ m \circ \tau^{-1}|_{V_\tau \cap H}) (T\tau \circ (T^\alpha \kappa^{-1} \times T^\alpha \kappa^{-1}) ((\kappa (1_{\beta (\kappa^{-1}(y))}) , \omega)(y,0)))
  \end{aligned}
 \end{equation}
 The above formula shows that the map $l$ splits into several components. We
 exploit this splitting to prove continuity of $l|_{W_v}$. First consider
 $\pr_2 \circ T(\kappa \circ m \circ \tau^{-1}|_{V_\tau \cap H})$. The mapping
 $\kappa \circ m \circ \tau^{-1}|_{V_\tau \cap H} \colon V_\tau \cap H \rightarrow V$
 is well-defined and smooth. We see that
 $\pr_2 \circ T(\kappa \circ m \circ \tau^{-1}|_{V_\tau \cap H}) = d(\kappa \circ m \circ \tau^{-1}|_{V_\tau \cap H}) \colon (V_\tau \cap H) \times H \rightarrow F$.
 Since $H$ and $F$ are Banach spaces, \cite[Lemma 2.10]{Milnor1982} implies
 that
 $p \colon V_\tau \cap H \rightarrow \BoundOp{H,F} , y \mapsto d(\kappa \circ m \circ \tau^{-1}|_{V_\tau \cap H}) (y,\cdot)$
 is continuous.
 
 Now we deal with
 $T\tau \circ (T^\alpha \kappa^{-1} \times T^\alpha \kappa^{-1})$: Recall that
 $\kappa \colon  U \rightarrow V \subseteq F$ is a chart for $G$ and for the
 bundle trivialisation we have
 $T^\alpha \kappa = T\kappa|_{\pi_\alpha^{-1} (U)}^{V\times E}$. Hence
 $T\tau \circ (T^\alpha \kappa^{-1} \times T^\alpha \kappa^{-1})$ is the
 restriction of $T(\tau \circ (\kappa^{-1} \times \kappa^{-1}))$ to the subset
 $((\kappa \times \kappa ((U \times U) \cap U_\tau)) \times E \times E)$. Again
 from \cite[Lemma 2.10]{Milnor1982} it follows that
 \begin{displaymath}
  q \colon \kappa \times \kappa ((U\times U) \cap U_\tau) \rightarrow \BoundOp{F \times F , F \times F}, x \mapsto d (\tau \circ \kappa^{-1} \times \kappa^{-1}|_{\kappa \times \kappa ((U\times U) \cap U_\tau)}) (x,\cdot)
 \end{displaymath}
 is a continuous map. Let $I_E^F$ be the canonical (smooth) inclusion of $E$
 into $F$. Then for all $x$ in the domain of $q$ we derive
 \begin{equation}\label{eq: q}
  q (x) \circ (I_{E}^F \times I_{E}^F) (\cdot)= \pr_2 T \tau \circ (T^\alpha \kappa^{-1} \times T^\alpha \kappa^{-1}) (x,\cdot).
 \end{equation}
 We define the canonical inclusion
 $I_E^{E\times E} \colon E \rightarrow E \times \{0\} \subseteq E \times E$ and
 note that
 $(I_E^F \times I_E^F) \circ I_E^{E\times E} \in \BoundOp{E, F\times F}$. The
 Banach space $H$ is a split subspace of $F \times F$. Hence the projection
 $\pi_H \colon F\times F \rightarrow H$ is continuous. The composition of
 continuous linear maps between Banach spaces is jointly continuous. Thus we
 obtain a continuous map
 \begin{displaymath}
  z \colon \kappa \times \kappa ((U\times U) \cap U_\tau) \rightarrow \BoundOp{E,H} , x\mapsto \pi_H \circ q(x) \circ (I_E^F \times I_E^F) \circ I_E^{E\times E}
 \end{displaymath}
 Now for $x \in (U\times U) \cap U_\tau \cap G \times_{\alpha, \beta} G$ we set
 $(x_1,x_2) \coloneq \kappa \times \kappa (x)$ and let $Y \in E$. Then the
 identity \eqref{eq: q} together with $\tau$ being a submanifold chart for
 $G \times_{\alpha, \beta} G$ shows
 \begin{displaymath}
  q(x) \circ (I_E^F \times I_E^F) \circ I_E^{E\times E} (Y) = \pr_2 T_{x} \tau (\underbrace{T^\alpha_{x_1} \kappa^{-1} (Y) \times 0_{\kappa^{-1} (x_2)}}_{\in TG \times_{T\alpha , T\beta} TG \stackrel{\eqref{eqn1}}{=} T(G \times_{\alpha, \beta} G)})  \in H
 \end{displaymath}
 In particular, $z(x)(Y) = q(x) \circ (I_E^F \times I_E^F) (Y,0)$. With the
 help of \eqref{eq: q} insert the mappings $p$ and $z$ into \eqref{eq: rewrite}
 to derive for $y \in W_v$ the identity
 \begin{displaymath}
  l (y) = p (\tau \circ u(y)) \circ z((\kappa \times \kappa)\circ u(y)) \in \BoundOp{E,F}.
 \end{displaymath}
 We conclude that $l|_{W_v}$ is continuous and the assertion of the Lemma
 follows.
\end{proof}

\begin{lemma}\label{lemma:
 Lipschitz} Let
 $h_{x_0} \colon [0,1] \times V_{x_0} \times C^{0} ([0,1] , \Gamma (\Lf(\cG))) \rightarrow F$
 be the map defined in \eqref{eq: hx0}. Then there is an open
 zero-neighbourhood $N_{0} \opn \Gamma (\Lf(\cG))$ and $\varepsilon > 0$ such
 that $h_{x_0}|_{[0,1] \times C^0([0,1], N_{0}) \times B_\varepsilon (z_0)}$ is
 uniformly Lipschitz continuous with respect to the Banach space component.\\
 Here we define
 $C^0([0,1], N_{0}) = \{\eta \in C^0 ([0,1], \Gamma (\Lf(\cG))) \mid \eta ([0,1])\subseteq N_0\} \opn C^0 ([0,1],  \Gamma (\Lf(\cG)))_{\text{c.o.}}$.
\end{lemma}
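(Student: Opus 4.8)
The plan is to derive the Lipschitz estimate from the mean value inequality, exploiting that $h_{x_0}$ is smooth in the Banach-space direction together with the explicit factorisation already isolated in the proof of Proposition~\ref{proposition: flow}~\ref{proposition: flow_b}. Recall from there that for $y$ in a neighbourhood of $z_0 = \kappa_{x_0}(1_{x_0})$ one may write
\[
 h_{x_0}(t,y,\eta) = l_{x_0,y}\bigl(\omega(t,y,\eta)\bigr),\qquad \omega(t,y,\eta)\coloneq \pr_2\circ T^\alpha\kappa_{x_0}\circ\eta^\wedge\bigl(t,\beta(\kappa_{x_0}^{-1}(y))\bigr)\in E,
\]
where $\omega$ is linear in $\eta$ and $\opnorm{l_{x_0,y}}\le B_{x_0}$ on an open ball around $z_0$. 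The decisive structural feature is that, after shrinking, $\beta\circ\kappa_{x_0}^{-1}$ maps a small closed ball $\overline{B_{\varepsilon}(z_0)}\subseteq F$ into a \emph{compact} subset $A_{x_0}$ of the finite-dimensional manifold $M$, so that every occurring evaluation of a section $\eta(t)$ takes place over $A_{x_0}$; this converts the smooth compact-open control encoded in the (to be chosen) zero-neighbourhood $N_0\opn\Gamma(\Lf(\cG))$ into uniform bounds, despite the Banach ball not being compact.

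First I would split the difference, for $y_1,y_2\in B_{\varepsilon}(z_0)$ and $\omega_i\coloneq\omega(t,y_i,\eta)$, as
\[
 h_{x_0}(t,y_1,\eta)-h_{x_0}(t,y_2,\eta)
 = l_{x_0,y_1}\bigl(\omega_1-\omega_2\bigr) + \bigl(l_{x_0,y_1}-l_{x_0,y_2}\bigr)(\omega_2),
\]
and estimate the two summands separately. For the first term one has $\opnorm{l_{x_0,y_1}}\le B_{x_0}$, while $\norm{\omega_1-\omega_2}_E\le C\,\norm{y_1-y_2}_F$: indeed $\omega_i$ is the principal part of $\eta(t)$ evaluated at $\beta\kappa_{x_0}^{-1}(y_i)\in A_{x_0}$, so the mean value theorem applied to the $C^1$-representative of $\eta(t)$ (whose derivative is uniformly bounded over the compact set $A_{x_0}$ once $\eta(t)\in N_0$), together with the smoothness of $\beta\circ\kappa_{x_0}^{-1}$, gives the bound uniformly in $t\in[0,1]$ and $\eta\in C^0([0,1],N_0)$.

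For the second term I would bound $\norm{\omega_2}_E$ by the supremum of the principal part of $\eta(t)$ over $A_{x_0}$, a continuous seminorm of $\eta(t)$ that is made small by the choice of $N_0$; it then remains to control $\opnorm{l_{x_0,y_1}-l_{x_0,y_2}}$. Here I would strengthen Lemma~\ref{lemma: cont:dep}: the maps $p$ and $z$ occurring in its proof arise as $x\mapsto df(x,\cdot)$ for smooth $f$ between Banach spaces, hence are not merely continuous but continuously differentiable into the relevant operator spaces (one further application of \cite[Lemma 2.10]{Milnor1982} than was needed there), so $y\mapsto l_{x_0,y}$ is a $C^1$-map into $\BoundOp{E,F}$. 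A $C^1$-map into a Banach space has, by continuity of its derivative, a bounded derivative on a sufficiently small ball $\overline{B_{\varepsilon}(z_0)}$, whence $y\mapsto l_{x_0,y}$ is Lipschitz there. Combining the two estimates produces the asserted uniform bound $\norm{h_{x_0}(t,y_1,\eta)-h_{x_0}(t,y_2,\eta)}_F\le L\,\norm{y_1-y_2}_F$ on $[0,1]\times C^0([0,1],N_0)\times B_{\varepsilon}(z_0)$, with $\varepsilon$ and $N_0$ fixed by the above requirements.

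The main obstacle is precisely this second term: since $B_\varepsilon(z_0)$ is not compact, the uniform control of $\opnorm{l_{x_0,y_1}-l_{x_0,y_2}}$ cannot be obtained from compactness and instead forces the upgrade of Lemma~\ref{lemma: cont:dep} from continuity to local Lipschitz continuity (equivalently, $C^1$-ness with locally bounded derivative) of $y\mapsto l_{x_0,y}$. By contrast the $\eta$-dependence is harmless, exactly because all section evaluations happen over the compact $A_{x_0}\subseteq M$, so the smooth compact-open topology on $\Gamma(\Lf(\cG))$ furnishes the required uniform $C^0$- and $C^1$-bounds through the choice of $N_0$.
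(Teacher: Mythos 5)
Your proof is essentially correct, but it takes a genuinely different route from the paper. The paper's own argument is much softer: after reordering the factors so that $h_{x_0}$ is a $C^{0,\infty}$-map on $([0,1]\times C^0([0,1],\Gamma(\Lf(\cG)))_{\text{c.o.}})\times V_{x_0}$, it invokes the general result \cite[Proposition 6.3]{amann1990} to get \emph{local} Lipschitz continuity in the Banach component near every point, applies this along the compact set $[0,1]\times\{\mathbf{0}\}\times\{z_0\}$ to obtain finitely many Lipschitz domains $J_i\times U_i\times B_{\varepsilon_i}(z_0)$ with constants $\lambda_i$, and then does purely point-set work: using the subbasis of the compact-open topology it manufactures a single zero-neighbourhood $N_0$ with $C^0([0,1],N_0)\subseteq\bigcap_i U_i$, sets $L=\max_i\lambda_i$, $\varepsilon=\tfrac{1}{2}\min_i\varepsilon_i$, and concludes by a triangle-inequality argument. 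No explicit estimates and no strengthening of Lemma \ref{lemma: cont:dep} are needed; the only subtlety is fitting a product neighbourhood inside the finite intersection. Your argument instead exploits the factorisation $h_{x_0}(t,y,\eta)=l_{x_0,y}(\omega(t,y,\eta))$ from \eqref{eq: hx0}, splits differences into two terms, and produces an explicit Lipschitz constant; this makes transparent exactly where the uniformity in $(t,\eta)$ comes from (control of the $C^0$- and $C^1$-seminorms of the sections over the compact $A_{x_0}$), but at the price of upgrading Lemma \ref{lemma: cont:dep} from continuity to local Lipschitz continuity of $y\mapsto l_{x_0,y}$ in $\BoundOp{E,F}$ — which is indeed available, since Michal--Bastiani smooth maps between open subsets of Banach spaces are Fr\'echet-$C^1$ with operator-norm continuous derivative (iterating \cite[Lemma 2.10]{Milnor1982}). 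Two details you gloss over deserve mention in a full write-up: first, the same non-compactness issue you flag for $l$ also affects the factor $\beta\circ\kappa_{x_0}^{-1}$ — smoothness alone does not bound its derivative on the non-compact ball $\overline{B_\varepsilon(z_0)}$, so you need the identical operator-norm continuity argument there to get $\norm{\omega_1-\omega_2}_E\leq C\norm{y_1-y_2}_F$; second, to apply the mean value inequality to the local representative of $\eta(t)$ you should arrange the compact set (e.g.\ take $A_{x_0}$ to be a closed chart ball) so that the relevant segments stay inside it. Both are fixable with tools you already invoke, so they do not undermine the approach.
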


\begin{proof}
 Reordering the product on which $h_{x_0}$ is defined, we identify
 $h_{x_0}$ with a $C^{0,\infty}$-mapping with respect to the decomposition
 $([0,1] \times C^{0}([0,1], \Gamma (\Lf(\cG)))_{\text{c.o.}}) \times V_{x_0}$. By
 \cite[Proposition 6.3]{amann1990} the map $h_{x_0}$ is locally Lipschitz
 continuous with respect to the Banach space component. Consider the constant
 map $\textbf{0} \colon [0,1] \rightarrow \Gamma (\Lf(\cG))$ whose image is the
 zero section in $\Gamma (\Lf(\cG))$. Since
 $[0,1] \times \{\textbf{0}\} \times \{z_0\}$ is compact, there are finitely
 many indices $1 \leq i \leq n, n \in \N$ such that
 $[0,1] = \bigcup_{1\leq i \leq n} J_i$ for $J_i \opn [0,1]$ and the following
 holds: 
 
 For each $1 \leq i \leq n$ there is
 $U_i \opn C^{0}([0,1], \Gamma (\Lf(\cG)))$ and $\varepsilon_i >0$ such that on
 $J_i \times U_i \times B_{\varepsilon_i} (z_0)$ the mapping $h_{x_0}$ is
 Lipschitz continuous with respect to $z \in B_{\varepsilon_i} (z_0)$.
 
 We let $\lambda_i$ be the minimal Lipschitz constant for $h_{x_0}$ on
 $J_i \times U_i \times B_{\varepsilon_i} (z_0)$.  Let
 $\cP$ be a subbasis for the topology of $\Gamma (\Lf(\cG))$. Since $[0,1]$ is
 compact, the sets
 $C^0 ([0,1], W)\coloneq \{f \in C^0([0,1],\Gamma(\Lf(\cG))) | f([0,1]) \subseteq W\}$
 with $W \in \cP$ form a subbasis of the compact-open topology on
 $C^0([0,1], \Gamma (\Lf(\cG)))$. Hence there are
 $W_1, \ldots W_m \in \cP, m \in \N$ such that
 $C^0 ([0,1], \bigcap_{1\leq j \leq m} W_j) = \bigcap_{1\leq j \leq m} C^0 ([0,1], W_j) \subseteq \bigcap_{1\leq i\leq n} U_i$
 is a $\textbf{0}$-neighbourhood. Define
 $N_{0} \coloneq \bigcap_{1 \leq j \leq n} W_j$. Since
 $\textbf{0} \in C^0 ([0,1], N_0)$ and thus $\textbf{0} (1) = 0 \in N_0$ holds,
 $N_0 \opn \Gamma (\Lf(\cG))$ is a zero-neighbourhood. Now define
 $L \coloneq \max_{1\leq i \leq n } \{\lambda_i\}$ and
 $\varepsilon = \frac{\min_{1\leq i \leq n} \varepsilon_i}{2}$.
 We consider
 $(t,\eta,x) (t,\eta , y) \in [0,1] \times C^0([0,1], N_{0}) \times B_{\varepsilon} (z_0)$
 such that $t \in J_i$. From
 $\norm{x-y}_E < \varepsilon \leq \frac{\varepsilon_i}{2}$ we derive
 $\norm{y-z_0}_E \leq \norm{y-x}_E + \norm{x-z_0}_E < \varepsilon_i$. Thus
 $\norm{h_{x_0} (t,\eta,x) - h_{x_0} (t,\eta,y)}_F \leq \lambda_i \norm{x-y}_E \leq L \norm{x-y}_E$
 and $L$ is a Lipschitz constant for $h_{x_0}$ on
 $[0,1] \times C^0([0,1], N_{0}) \times B_{\varepsilon} (0)$.
 In conclusion, $h_{x_0}$ is uniformly Lipschitz continuous with respect to the
 Banach space component.
\end{proof}

\appendix

\section{Locally convex manifolds and spaces of smooth maps}\label{Appendix:
MFD}

In this appendix we collect the necessary background on the theory of manifolds
that are modelled on locally convex spaces and how spaces of smooth maps can be
equipped with such a structure. Let us first recall some basic facts concerning
differential calculus in locally convex spaces. We follow
\cite{hg2002a,BertramGlocknerNeeb04Differential-calculus-over-general-base-fields-and-rings}.

\begin{definition}\label{defn:
 deriv} Let $E, F$ be locally convex spaces, $U \subseteq E$ be an open subset,
 $f \colon U \rightarrow F$ a map and $r \in \N_{0} \cup \{\infty\}$. If it
 exists, we define for $(x,h) \in U \times E$ the directional derivative
 $$df(x,h) \coloneq D_h f(x) \coloneq \lim_{t\rightarrow 0} t^{-1} (f(x+th) -f(x)).$$
 We say that $f$ is $C^r$ if the iterated directional derivatives
 \begin{displaymath}
  d^{(k)}f (x,y_1,\ldots , y_k) \coloneq (D_{y_k} D_{y_{k-1}} \cdots D_{y_1}
  f) (x)
 \end{displaymath}
 exist for all $k \in \N_0$ such that $k \leq r$, $x \in U$ and
 $y_1,\ldots , y_k \in E$ and define continuous maps
 $d^{(k)} f \colon U \times E^k \rightarrow F$. If $f$ is $C^\infty$ it is also
 called smooth. We abbreviate $df \coloneq d^{(1)} f$.
 
 From this definition of smooth map there is an associated concept of locally
 convex manifold, i.e., a Hausdorff space that is locally homeomorphic to open
 subsets of locally convex spaces with smooth chart changes. See
 \cite{Wockel13Infinite-dimensional-and-higher-structures-in-differential-geometry,neeb2006,hg2002a}
 for more details.
\end{definition}

\begin{definition}[Differentials
 on non-open sets]\label{defn: nonopen}
 \begin{enumerate}
  \item A subset $U$ of a locally convex space $E$ is called \emph{locally
        convex} if every $x \in U$ has a convex neighbourhood $V$ in $U$.
  \item Let $U\subseteq E$ be a locally convex subset with dense interior and $F$ a locally convex space. 
        A continuous mapping $f \colon U \rightarrow F$ is called $C^r$ if
        $f|_{U^\circ} \colon U^\circ \rightarrow F$ is $C^r$ and each of the
        maps $d^{(k)} (f|_{U^\circ}) \colon U^\circ \times E^k \rightarrow F$
        admits a continuous extension
        $d^{(k)}f \colon U \times E^k \rightarrow F$ (which is then necessarily
        unique). 
        Analogously, we say that a continuous map $g \colon U \rightarrow M$ to a smooth manifold $M$ is of class $C^r$ if the tangent maps 
        $T^{k} (f|_{U^\circ}) \colon U^\circ \times E^{2^k-1} \rightarrow T^kM$ exist and 
        admit a continuous extension $T^{k}f \colon U \times E^{2^k-1} \rightarrow T^kM$. 
        Note that we defined $C^k$-mappings on locally convex sets with dense interior in two ways for topological vector spaces (when viewed as manifolds).
        However, by \cite[Lemma 1.14]{hg2002a} both conditions yield the same class of mappings. 
        If $U \subseteq \R$ and $g$ is $C^{1}$, we obtain a continuous
        map $g' \colon U \rightarrow TM, g'(x) \coloneq T_x g(1)$. We shall
        write $\frac{\partial}{\partial x}g(x) \coloneq g' (x)$.
 \end{enumerate}
\end{definition}

\begin{definition}\label{defn:
 conno} Let $M$ be a smooth manifold. Then $M$ is called \emph{Banach} (or
 \emph{Fr\'echet}) manifold if all its modelling spaces are Banach (or
 Fr\'echet) spaces. The manifold $M$ is called \emph{locally metrisable} if the
 underlying topological space is locally metrisable (equivalently if all
 modelling spaces of $M$ are metrizable). It is called \emph{metrizable} if it
 is metrisable as a topological space (equivalently locally metrisable and
 paracompact).
\end{definition}

\begin{definition}\label{def:local_addition}
 Suppose $M$ is a smooth manifold. Then a \emph{local addition} on $M$ is a
 smooth map $\A\from U\opn TM\to M$, defined on an open neighbourhood $U$ of
 the submanifold $M\se TM$ such that
 \begin{enumerate}
  \item \label{def:local_addition_a} $\pi\times \A\from U\to M\times M$,
        $v\mapsto (\pi(v),\A(v))$ is a diffeomorphism onto an open
        neighbourhood of the diagonal $\Delta M\se M\times M$ and
  \item \label{def:local_addition_b} $\A(0_{m})=m$ for all $m\in M$.
 \end{enumerate}
 We say that $M$ \emph{admits a local addition} if there exist a local addition
 on $M$.
\end{definition}

\begin{lemma}\label{lem: add:T2}(cf.\
 \cite[10.11]{michor1980}) Suppose that $\A\from U\opn TM\to M$ is a local
 addition on $M$ and that $\tau\from T(TM)\to T(TM)$ is the canonical flip on
 $T(TM)$. Then $ T \A \circ \tau\from \tau(TU)\opn T(TM)\to TM$ is a local
 addition on $TM$. In particular, $TM$ admits a local addition if $M$ does so.
\end{lemma}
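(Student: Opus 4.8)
The plan is to take $\B\coloneq T\A\circ\tau$ as the candidate local addition on $N\coloneq TM$ and to verify the two conditions of Definition \ref{def:local_addition} by reducing them to the corresponding properties of $\A$, using two standard naturality identities for the canonical flip. First I would check that $\B$ is well defined on $\tau(TU)\opn T(TM)=TN$: since $U\opn TM$ the set $TU$ is open in $T(TM)$, and as $\tau$ is a diffeomorphism the image $\tau(TU)$ is open; the composite $\tau(TU)\xrightarrow{\tau}TU\xrightarrow{T\A}TM$ makes sense because $\tau$ is an involution. To see that $\tau(TU)$ is a neighbourhood of the zero section $z_{TM}(TM)$ of $TN$ (where $z_M\from M\to TM$ and $z_{TM}\from TM\to T(TM)$ denote the zero sections) I would invoke the identity $\tau\circ z_{TM}=Tz_M$: from $z_M(M)\se U$ one gets $Tz_M(TM)\se TU$, hence $z_{TM}(TM)\se\tau(TU)$.

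For condition \ref{def:local_addition_b} I would simply compute, using the identity above,
\[
\B\circ z_{TM}=T\A\circ\tau\circ z_{TM}=T\A\circ Tz_M=T(\A\circ z_M)=T(\id_M)=\id_{TM},
\]
where $\A\circ z_M=\id_M$ is precisely condition \ref{def:local_addition_b} for $\A$. Thus $\B(0_n)=n$ for all $n\in TM$.

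For condition \ref{def:local_addition_a} the key observation is the identity $\pi_{T(TM)}\times\B=T(\pi_{TM}\times\A)\circ\tau$, valid under the canonical identification $T(M\times M)\cong TM\times TM$. This follows by combining $T\A\circ\tau=\B$ with the second flip identity $T\pi_{TM}\circ\tau=\pi_{T(TM)}$, which expresses that $\tau$ interchanges the two bundle projections of $T(TM)$. Granting this, I would argue that since $\Theta\coloneq\pi_{TM}\times\A\from U\to M\times M$ is a diffeomorphism onto an open neighbourhood $\Omega$ of $\Delta M$ (condition \ref{def:local_addition_a} for $\A$), its tangent $T\Theta\from TU\to T\Omega$ is a diffeomorphism onto the open set $T\Omega$, which under $T(M\times M)\cong TM\times TM$ is an open neighbourhood of $T\Delta M=\Delta TM=\Delta N$. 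Precomposing the diffeomorphism $T\Theta$ with the diffeomorphism $\tau$ then exhibits $\pi_{T(TM)}\times\B$ as a diffeomorphism onto an open neighbourhood of the diagonal $\Delta N$, which is exactly condition \ref{def:local_addition_a} for $\B$.

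The main obstacle is justifying the two flip identities $\tau\circ z_{TM}=Tz_M$ and $T\pi_{TM}\circ\tau=\pi_{T(TM)}$ in the locally convex setting. Both are local statements, so I would verify them in a chart in which $M$ is modelled on a locally convex space $E$, so that $TM$ is locally $M_0\times E$ with $M_0\opn E$ and $T(TM)$ carries coordinates $((x,v),(a,b))$ with $\tau((x,v),(a,b))=((x,a),(v,b))$; there $\pi_{T(TM)}((x,v),(a,b))=(x,v)$ and $T\pi_{TM}((x,v),(a,b))=(x,a)$, and both identities become immediate. I would also note, for condition \ref{def:local_addition_a}, that the identification $T(M\times M)\cong TM\times TM$ is the canonical one, under which $T\Theta$ corresponds to $(T\pi_{TM},T\A)$ and the tangent of the diagonal $M\to M\times M$ corresponds to the diagonal $TM\to TM\times TM$; these are the standard compatibilities of the tangent functor with finite products, which hold verbatim in the locally convex calculus.
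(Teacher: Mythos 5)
Your proof is correct and takes essentially the same route as the paper: both verify condition \ref{def:local_addition_b} via the flip identity interchanging the zero section $0_{TM}$ with $T0_{M}$, and condition \ref{def:local_addition_a} by recognising $\pi_{T(TM)}\times(T\A\circ\tau)$ as $T(\pi_{TM}\times\A)\circ\tau$ using the identity relating $\pi_{T(TM)}$ and $T\pi_{TM}$ via $\tau$. The only cosmetic difference is that you verify the two flip identities in local coordinates, whereas the paper cites \cite[1.19]{michor1980} for them (while also recording the local form of $\tau$); you are in fact slightly more explicit than the paper in checking that the image $T\Omega$ is a neighbourhood of the diagonal $\Delta TM$.
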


\begin{proof}
 Let $0_{M}\from M\to TM$ denote the zero section of $\pi_{M}\from TM\to M$.
 
 The diffeomorphism $\tau\from T(TM)\to T(TM)$ is locally given by
 $(m,x,y,z)\mapsto (m,y,x,z)$ and makes the diagrams
 \begin{equation*}
  \vcenter{  \xymatrix{T(TM)\ar[d]_{T\pi_{M}}\ar[r]^{\tau} & T(TM)\ar[d]^{\pi_{TM}}\\
  TM\ar@{=}[r] & TM
  }}
  \quad\text{ and }\quad
  \vcenter{  \xymatrix{T(TM)\ar[r]^{\tau} & T(TM)\\
  TM\ar@{=}[r]\ar[u]^{T0_{M}} & TM \ar[u]_{0_{TM}}
  }}
 \end{equation*}
 commute \cite[1.19]{michor1980}. Then $\A \circ 0_{M}=\id_{M}$ implies that
 $T \A$ is defined on the open neighbourhood $TU$ of $T0_{M}(TM)$ in $T(TM)$
 and satisfies $T \A \circ T0_{M} =\id_{TM} $. This implies that
 $T \A \circ \tau$ is defined on the open neighbourhood $\tau(T U)$ of
 $0_{TM}(TM)$. It satisfies $T \A \circ \tau \circ 0_{TM}=\id_{TM}$ and thus
 \ref{def:local_addition} \ref{def:local_addition_b} by construction. Moreover,
 if $\pi_{M}\times \A$ is a diffeomorphism from $U$ onto $V\opn M\times M$,
 then $T(\pi_{M}\times \A)=(T \pi_{M}\times T\A)$ is a diffeomorphism from $TU$
 onto $TV\opn T(M\times M)=TM\times TM$. Thus
 $(\pi_{TM}\times T \A \circ \tau)$ is a diffeomorphism from $\tau(TU)$ onto
 $TV$. This establishes \ref{def:local_addition} \ref{def:local_addition_a}.
\end{proof}

\begin{definition}\label{def:smooth_compact_open_topology}
 Let $M,N$ be smooth manifolds. Then we endow the smooth maps $C^{\infty}(M,N)$
 with the initial topology with respect to
 \begin{equation*}
  C^{\infty}(M,N)\hookrightarrow \prod_{k\in\N_{0}}C^0(T^{k}M,T^{k}N)_{c.o.},\quad
  f\mapsto (T^{k}f)_{k\in \N_{0} },
 \end{equation*}
 where $C^0(T^{k}M,T^{k}N)_{c.o.}$ denotes the space of continuous functions
 endowed with the compact-open topology.
\end{definition}

\begin{tabsection}
 From \cite[Proposition 7.3 and Theorem
 5.14]{Wockel13Infinite-dimensional-and-higher-structures-in-differential-geometry}
 we recall the following result.
\end{tabsection}

\begin{theorem}\label{thm:
 sect} Let $E\to M$ be a vector bundle over the compact manifold $M$ such that
 the fibres are locally convex spaces. Then the space of sections
 $\Gamma(M\xleftarrow{}E)$ is a closed subspace of $C^{\infty}(M,E)$ and a
 locally convex space with respect to point-wise addition and scalar
 multiplication. If the fibres of $E\to M$ are metrisable, then so is
 $\Gamma(M\xleftarrow{} E)$ and if the fibres are Fr\'echet spaces, then so is
 $\Gamma(M\xleftarrow{} E)$.
\end{theorem}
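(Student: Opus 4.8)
The plan is to trivialise the bundle over a finite cover and thereby identify $\Gamma(M \xleftarrow{} E)$, as a topological vector space, with a closed linear subspace of a finite product of vector-valued mapping spaces, from which all three assertions can be read off.

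First I would exploit the compactness of $M$ to choose a finite family $(\psi_i, W_i)_{1 \le i \le n}$ of charts of $M$ together with vector bundle trivialisations $\theta_i \colon E|_{W_i} \to W_i \times V$ onto the typical fibre $V$; shrinking the $W_i$ we may assume that $M$ is already covered by the interiors of compacta $K_i \se W_i$. Since a section is completely determined by its local representatives, I would consider the linear map
\[
 \Theta \colon \Gamma(M \xleftarrow{} E) \to \prod_{1 \le i \le n} C^\infty(\psi_i(W_i), V), \quad \sigma \mapsto \left( \pr_2 \circ \theta_i \circ \sigma \circ \psi_i^{-1} \right)_{1 \le i \le n}.
\]
The central input — which is exactly \cite[Proposition 7.3]{Wockel13Infinite-dimensional-and-higher-structures-in-differential-geometry} — is that $\Theta$ is a linear topological embedding whose image is a closed subspace (it is cut out by the smoothness-compatibility conditions on the overlaps $W_i \cap W_j$). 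Granting this, the pointwise operations on $\Gamma(M \xleftarrow{} E)$ correspond under $\Theta$ to the operations on the product of the locally convex spaces $C^\infty(\psi_i(W_i), V)$; hence they are continuous and $\Gamma(M \xleftarrow{} E)$ is a locally convex space, local convexity being inherited by finite products and by (closed) subspaces.

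For the closedness inside $C^\infty(M, E)$ I would argue directly from Definition \ref{def:smooth_compact_open_topology}: the condition defining a section, $\pi \circ \sigma = \id_M$, says $T^k(\pi \circ \sigma) = T^k \pi \circ T^k \sigma = T^k \id_M$ for all $k$, and post-composition with the continuous maps $T^k \pi$ is continuous for the compact-open topology. Thus $\Gamma(M \xleftarrow{} E)$ is the preimage of the single map $\id_M$ under the continuous pushforward $\pi_* \colon C^\infty(M, E) \to C^\infty(M, M)$, $\sigma \mapsto \pi \circ \sigma$ (also covered by Theorem \ref{thm: MFDMAP}). Since $C^\infty(M, M)$ is Hausdorff, $\{ \id_M \}$ is closed, and so is its preimage.

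Finally, the metrisability and Fréchet statements follow from the embedding of the first step together with the corresponding properties of each factor: a closed subspace of a finite product of metrisable (resp.\ Fréchet) spaces is again metrisable (resp.\ Fréchet). Each factor $C^\infty(\psi_i(W_i), V)$ is metrisable when $V$ is, since the source domains entering Definition \ref{def:smooth_compact_open_topology} are hemicompact, so the relevant compact-open topologies are metrisable and their countable product over $k$ is as well; completeness, and hence the Fréchet property, is inherited when $V$ is complete. This is the content of \cite[Theorem 5.14]{Wockel13Infinite-dimensional-and-higher-structures-in-differential-geometry}. The one genuinely non-formal step, and the main obstacle, is the first one — verifying that $\Theta$ is a topological embedding with closed image, i.e.\ that the subspace topology agrees with the initial topology induced by the local trivialisations and that the overlap conditions define a closed subspace — which is precisely where the fine structure of the $C^\infty$-compact-open topology has to be used.
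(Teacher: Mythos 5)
The paper gives no proof of this theorem at all: it is recalled verbatim from \cite[Proposition 7.3 and Theorem 5.14]{Wockel13Infinite-dimensional-and-higher-structures-in-differential-geometry}, and your proposal defers its two genuinely non-formal steps (the embedding $\Theta$ being a linear topological embedding with closed image, and the metrisability/completeness of the local model spaces) to exactly those same two results, so it matches the paper's approach as closely as the paper permits. Your supplementary glue is also correct: the closedness of $\Gamma(M\xleftarrow{}E)$ as the preimage of $\{\id_M\}$ under pushforward by $\pi$ (continuous for the topology of Definition \ref{def:smooth_compact_open_topology}, since post-composition with each $T^k\pi$ is compact-open continuous), and the inheritance of metrisability and completeness by closed subspaces of finite products, are the same kind of arguments the paper itself uses in analogous places (compare the closed-intersection-of-evaluation-preimages arguments in Lemma \ref{lemma: composing:submanifold} and Proposition \ref{Proposition: SectMFD}).
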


\begin{tabsection}
 Our main tool will be the following excerpt from \cite[Theorem
 7.6]{Wockel13Infinite-dimensional-and-higher-structures-in-differential-geometry}.

\end{tabsection}

\begin{theorem}\label{thm:
 MFDMAP} Let $M$ be a compact manifold and $N$ be a locally convex and locally
 metrisable manifold that admits a local addition $\A\from U\opn TN\to N$. Set
 $V:=(\pi\times \A)(U)$, which is an open neighbourhood of the diagonal
 $\Delta N$ in $N\times N$. For each $f\in C^{\infty}(M,N)$ we set
 \begin{equation*}
  O_{f}\coloneq\{g\in C^{\infty}(M,N)\mid (f(x),g(x))\in V \}.
 \end{equation*}
 Then the following assertions hold.
 \begin{enumerate}
  \item \label{thm:manifold_structure_on_smooth_mapping_a} The set $O_{f}$
        contains $f$, is open in $C^{\infty}(M,N)$ and the formula
        $(f(x),g(x))=(f(x),\A(\varphi_{f}(g)(m)))$ determines a homeomorphism
        \begin{equation*}
         \varphi_{f}\from  O_{f}\to \{h\in C^{\infty}(M,TN)\mid \pi(h(x))=f(x)\}\cong \Gamma(f^{*}(TN))
        \end{equation*}
        from $O_{f}$ onto the open subset
        $\{h\in C^{\infty}(M,TN)\mid \pi(h(x))=f(x)\}\cap C^{\infty}(M,U)$ of
        $\Gamma(f^{*}(TN))$.
  \item \label{thm:manifold_structure_on_smooth_mapping_b} The family
        $(\varphi_{f}\from O_{f}\to \varphi_{f}(O_{f}))_{f\in C^{\infty}(M,N)}$
        is an atlas, turning $C^{\infty}(M,N)$ into a smooth locally convex and
        locally metrisable manifold.
  \item \label{thm:manifold_structure_on_smooth_mapping_f} The manifold
        structure on $C^{\infty}(M,N)$ from
        \ref{thm:manifold_structure_on_smooth_mapping_b} is independent of the
        choice of the local addition $\A$.
  \item \label{thm:manifold_structure_on_smooth_mapping_d} If $L$ is another
        locally convex and locally metrisable manifold, then a map
        $f\from L\times M\to N$ is smooth if and only if
        $\wh{f}\from L\to C^{\infty}(M,N)$ is smooth. In other words,
        \begin{equation*}
         C^{\infty}(L\times M,N)\to C^{\infty}(L,C^{\infty}(M,N)), \quad f\mapsto \wh{f}
        \end{equation*}
        is a bijection (which is even natural).
  \item \label{thm:manifold_structure_on_smooth_mapping_h} Let $M'$ be compact
        and $N'$ be locally metrisable such that $N'$ admits a local addition.
        If $\mu\from M'\to M$, $\nu\from N\to N'$ are smooth, then
        \begin{equation*}
         \nu _{*} \mu^{*}\from C^{\infty}(M,N)\to C^{\infty}(M',N'),\quad\gamma\mapsto \nu \circ \gamma \circ \mu
        \end{equation*}
        is smooth.
  \item \label{thm:manifold_structure_on_smooth_mapping_e} If $M'$ is another
        compact manifold, then the composition map
        \begin{equation*}
         \circ\from   C^{\infty}(M',N)\times C^{\infty}(M,M')\to C^{\infty}(M,N), \quad
         (\gamma,\eta)\mapsto \gamma \circ \eta
        \end{equation*}
        is smooth.
 \end{enumerate}
\end{theorem}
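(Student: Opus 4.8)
The plan is to construct the manifold structure directly from the local addition $\A$, taking the section spaces $\Gamma(f^{*}TN)$ as model spaces; by Theorem \ref{thm: sect} these are locally convex and, since $M$ is compact and $N$ locally metrisable, metrisable. The whole argument rests on a single engine: an exponential law $C^\infty(L\times M,N)\cong C^\infty(L,C^\infty(M,N))$ for the topology of Definition \ref{def:smooth_compact_open_topology}, together with the smoothness of evaluation; granting these, every assertion becomes routine. For (a), the diffeomorphism $\pi\times\A\from U\to V$ lets me set $\varphi_{f}(g)(x)\coloneq (\pi\times\A)^{-1}(f(x),g(x))$, a section of $f^{*}TN$ with values in $U$, with inverse $\sigma\mapsto \A\circ\sigma$. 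Compactness of $M$ makes ``$(f(x),g(x))\in V$ for all $x$'' an open condition in the compact-open $C^\infty$-topology, so $O_{f}$ is open; and $\varphi_{f}$ is a homeomorphism onto $\{h\mid \pi\circ h=f\}\cap C^\infty(M,U)$ because it is post-composition with the smooth fibre map $(\pi\times\A)^{-1}$.

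For the atlas (b) I would compute the transition on $O_{f}\cap O_{f'}$ as $\varphi_{f'}\circ\varphi_{f}^{-1}\from \sigma\mapsto\bigl(x\mapsto(\pi\times\A)^{-1}(f'(x),\A(\sigma(x)))\bigr)$, that is, post-composition with a fibrewise smooth map between open subsets of $f^{*}TN$ and $f'^{*}TN$. Expressing the section spaces in bundle charts turns this into a map of the form $\wh\Theta$ for a smooth $\Theta\from L\times M\to W$, so its smoothness is precisely the exponential law applied locally. The independence (f) is obtained identically, comparing the charts of $\A$ and a second addition $\A'$ through the fibrewise-smooth identification $(\pi\times\A')^{-1}\circ(\pi\times\A)$.

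The crux is the exponential law (d). I would reduce it to the vector-valued statement by pulling $N$ back through the charts of (a) and $L$ through its manifold charts, so that everything is detected on $C^\infty(L\times M,W)$ versus $C^\infty(L,C^\infty(M,W))$ for a locally convex space $W$. There the essential inputs are that $M$ is compact---so that $C^\infty(M,W)$ is a well-behaved locally convex space on which evaluation is smooth---and that the iterated directional derivatives $d^{(i,j)}$ of a candidate $h$ correspond, under currying, to the derivatives of $\wh h$ taking values in the section spaces, differentiation in the $L$-variable commuting with evaluation over the compact $M$. Once (d) holds, smoothness of $\ev\from C^\infty(M,N)\times M\to N$ is the image of $\id$ under the law; then $\nu_{*}\mu^{*}$ in (h) is smooth because its uncurried form $(\gamma,x')\mapsto\nu(\gamma(\mu(x')))$ is a composite of $\nu$, $\ev$ and $\mu$, and composition (e) is smooth because its uncurried form $(\gamma,\eta,x)\mapsto\gamma(\eta(x))$ equals $\ev_{N}\circ(\id\times\ev_{M'})$.

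The hard part is the exponential law itself in the locally convex (non-Banach) setting: I must show smoothness is simultaneously preserved and reflected under currying, which means controlling \emph{all} iterated directional derivatives at once and matching them to the tangent prolongations $T^{k}$ defining the topology of Definition \ref{def:smooth_compact_open_topology}. Compactness of $M$ is indispensable here, as it is what makes the section spaces complete and the relevant derivatives uniform; everything beyond this is bookkeeping with the smooth fibre maps furnished by $\A$.
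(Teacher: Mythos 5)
The paper itself contains no proof of this theorem: it is imported verbatim as an excerpt from \cite[Theorem 7.6]{Wockel13Infinite-dimensional-and-higher-structures-in-differential-geometry} (which in turn follows the construction in \cite{michor1980}), so there is no in-paper argument to compare against. Your outline reproduces precisely the strategy of that cited source --- charts $\varphi_f$ built from the local addition and modelled on the section spaces $\Gamma(f^{*}TN)$, smoothness of the chart transitions and of the exponential law (d) reduced through bundle trivialisations to the vector-valued exponential law and the smoothness of evaluation on section spaces (exactly the results of \cite{alas2012} and \cite{Wockel13Infinite-dimensional-and-higher-structures-in-differential-geometry} that the paper invokes elsewhere), with (e) and (h) then deduced formally by uncurrying --- and its logical ordering, establishing the vector-valued law before the atlas so that no circularity enters, is sound.
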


\begin{theorem}
 \label{thm:tangent_map_of_pull_back_and_push_forward} Let $M$ be a compact
 manifold and $N$ be a locally convex and locally metrisable manifold that
 admits a local addition. There is an isomorphism of vector bundles
 \begin{equation*}
  \xymatrix@=1em{
  T C^{\infty}(M,N)\ar[dr]_(.4){\pi_{T_{C^{\infty}(M,N)}}} \ar[rr]^{\Phi_{M,N}} && C^{\infty}(M,TN)\ar[dl]^(.425){(\pi_{TN})_{*}}\\
  & C^{\infty}(M,N)
  }
 \end{equation*}
 given by
 \begin{equation*}
  \Phi_{M,N}\from T C^{\infty}(M,N)\to C^{\infty}(M,TN),\quad    \eqclass{t\mapsto \eta(t)}\mapsto
  \left(m\mapsto \eqclass{t\mapsto \eta^\wedge(t,m)}\right).
 \end{equation*}
 Here we have identified tangent vectors in $C^{\infty}(M,N)$ with equivalence
 classes $\eqclass{\eta}$ of smooth curves \\
 $\eta\from \mathopen{]}-\varepsilon,\varepsilon\mathclose{[}\to C^{\infty}(M,N)$
 for some $\varepsilon>0$. The isomorphism $\varphi_{M,N}$ is natural with
 respect to the morphisms from
 \ref{thm:manifold_structure_on_smooth_mapping_h}, i.e., the diagrams
 \begin{equation*}
  \vcenter{         \xymatrix@=1em{
  T C^{\infty}(M,N) \ar[rr]^{\Phi_{M,N}}\ar[d]_{T(\mu^{*})} && C^{\infty}(M,TN)\ar[d]^{\mu_{*}}\\
  T C^{\infty}(M',N) \ar[rr]^{\Phi_{M',N}}\ar[d]_{{T(\nu _{*})}} && C^{\infty}(M',TN)\ar[d]^{({T \nu})_{*}}\\
  T C^{\infty}(M',N') \ar[rr]^{\Phi_{M',N'}} && C^{\infty}(M',TN')
  }}\quad\text{ and }\quad
  \vcenter{         \xymatrix@=1em{
  T C^{\infty}(M,N) \ar[rr]^{\Phi_{M,N}} \ar[d]_{{T(\nu _{*})}} && C^{\infty}(M,TN)\ar[d]^{({T \nu})_{*}}\\
  T C^{\infty}(M,N') \ar[rr]^{\Phi_{M,N'}} \ar[d]_{T(\mu^{*})}&& C^{\infty}(M,TN')\ar[d]^{\mu_{*}}\\
  T C^{\infty}(M',N') \ar[rr]^{\Phi_{M',N'}} && C^{\infty}(M',TN')
  }}
 \end{equation*}
 commute. In particular, $T_{f}C^{\infty}(M,N)$ is naturally isomorphic (as a
 topological vector space) to $\Gamma(f^{*}TN)$ and with respect to this
 isomorphism we have
 \begin{alignat*}{3} 
  T_{f}(\mu^{*})\from & \Gamma(f^{*}TN)\to {\Gamma ((f \circ \mu )^{*}TN)},&&\quad \sigma\mapsto \sigma \circ  \mu \\
  T_{f}(\nu_{*})\from & \Gamma(f^{*}TN)\to {\Gamma ((\nu \circ f)^{*}TN')},&&\quad \sigma\mapsto T \nu \circ \sigma.
 \end{alignat*}
\end{theorem}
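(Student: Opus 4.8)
The plan is to extract $\Phi_{M,N}$ from the canonical charts of Theorem \ref{thm: MFDMAP} and to verify both the bundle isomorphism and the naturality fibrewise, using the exponential law \ref{thm: MFDMAP} \ref{thm:manifold_structure_on_smooth_mapping_d} to pass between curves of smooth maps and honest smooth maps on $\mathopen{]}-\varepsilon,\varepsilon\mathclose{[}\times M$.

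First I would fix $f\in C^\infty(M,N)$ and the chart $\varphi_f\from O_f\to\varphi_f(O_f)$ from \ref{thm: MFDMAP} \ref{thm:manifold_structure_on_smooth_mapping_a}. Since $\varphi_f(f)$ is the zero section $0\in\Gamma(f^*TN)$ and $\varphi_f(O_f)$ is open in the model space $\Gamma(f^*TN)$, the map $T_f\varphi_f$ identifies $T_fC^\infty(M,N)$ with $\Gamma(f^*TN)$. A tangent vector $X=\eqclass{\eta}$ is represented by a smooth curve $\eta$ with $\eta(0)=f$, and $c\coloneq\varphi_f\circ\eta$ is a curve in the open set with $c(0)=0$ and $c'(0)=T_f\varphi_f(X)$; note that every $c(t)(m)$ lies in the single fibre $T_{f(m)}N$, so $c'(0)(m)\in T_{f(m)}N$. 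By the exponential law $\eta^\wedge$ is smooth and, by the defining relation of $\varphi_f$, satisfies $\eta^\wedge(t,m)=\A(c(t)(m))$. Differentiating at $t=0$ gives
$$\Phi_{M,N}(X)(m)=\partial_t\big|_{0}\eta^\wedge(t,m)=A_m\big(c'(0)(m)\big),\qquad A_m\coloneq T_{0_{f(m)}}(\A|_{T_{f(m)}N}).$$
Each $A_m$ is a topological linear automorphism of $T_{f(m)}N$ (invertibility is exactly the local--diffeomorphism requirement of Definition \ref{def:local_addition} \ref{def:local_addition_a}, and $\A(0_m)=m$ fixes the base point), and $m\mapsto A_m$ is a smooth automorphism of $f^*TN$, being the vertical part of the smooth map $T\A$ along the zero section. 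Hence the restriction of $\Phi_{M,N}$ to $T_fC^\infty(M,N)$ equals $\mathcal A_f\circ T_f\varphi_f$ for the induced topological automorphism $\mathcal A_f$ of $\Gamma(f^*TN)$. This already shows that $\Phi_{M,N}$ is well defined (independent of the representing curve), that it is fibre preserving with $(\pi_{TN})_*\circ\Phi_{M,N}=\pi_{T_{C^\infty(M,N)}}$, and that on each fibre it is a topological linear isomorphism onto $\Gamma(f^*TN)\se C^\infty(M,TN)$, the latter being closed by Theorem \ref{thm: sect}.

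Next I would promote this to a global isomorphism of vector bundles. By Lemma \ref{lem: add:T2} the manifold $TN$ carries the local addition $T\A\circ\tau$, so $C^\infty(M,TN)$ is itself a manifold of the type in Theorem \ref{thm: MFDMAP}; on the source the tangent charts $T\varphi_f$ trivialise $TC^\infty(M,N)$ over $O_f$ as $\varphi_f(O_f)\times\Gamma(f^*TN)$. Writing $\Phi_{M,N}$ in these two charts presents it as a map whose fibre component is the smoothly varying automorphism found above, so smoothness of $\Phi_{M,N}$ and of its inverse reduces to smooth dependence of $T\A$ on its arguments. I expect this chart computation to be the main obstacle: the fibrewise linear isomorphism is already in hand, but matching the tangent chart on the mapping-space manifold with the chart on $C^\infty(M,TN)$ coming from the lifted addition $T\A\circ\tau$ is the genuinely technical step.

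Finally, naturality follows from the exponential law and the chain rule, checked on representing curves. For $\mu\from M'\to M$ one has $(\mu^*\eta)^\wedge(t,m')=\eta^\wedge(t,\mu(m'))$, giving $\Phi_{M',N}(T(\mu^*)X)=\mu_*\Phi_{M,N}(X)$ with $\mu_*h=h\circ\mu$; for $\nu\from N\to N'$ one has $(\nu_*\eta)^\wedge(t,m)=\nu(\eta^\wedge(t,m))$, and differentiating through $T\nu$ gives $\Phi_{M,N'}(T(\nu_*)X)=(T\nu)_*\Phi_{M,N}(X)$. These are precisely the two commuting squares. Transporting them along $\Phi$ and restricting to $T_fC^\infty(M,N)\cong\Gamma(f^*TN)$ reads off the displayed formulas: $T_f(\mu^*)$ sends $\sigma$ to $\sigma\circ\mu$ and $T_f(\nu_*)$ sends $\sigma$ to $T\nu\circ\sigma$, and one only checks $\pi_{TN}\circ(\sigma\circ\mu)=f\circ\mu$ and $\pi_{TN'}\circ(T\nu\circ\sigma)=\nu\circ f$ to see that the images land in $\Gamma((f\circ\mu)^*TN)$ and $\Gamma((\nu\circ f)^*TN')$ respectively.
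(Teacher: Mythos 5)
Your fibrewise formula at the chart centre and your naturality computations are correct, and your skeleton---lifting the local addition to $TN$ by Lemma \ref{lem: add:T2} and then comparing charts---is also how the paper proceeds. But there is a genuine gap, located exactly where you flag ``the main obstacle'': the smoothness of $\Phi_{M,N}$ and of its inverse, which is the entire content of the claim that $\Phi_{M,N}$ is an isomorphism of vector bundles, is never established, and the sketch you offer for it rests on an inaccurate simplification. The identity $\Phi_{M,N}|_{T_fC^\infty(M,N)}=\mathcal{A}_f\circ T_f\varphi_f$ was derived only at the centre $f$ of the chart, where $\varphi_f(f)$ is the zero section. For a tangent vector $X$ at a general $g\in O_f$ the same computation yields $\Phi_{M,N}(X)(m)=T_{\varphi_f(g)(m)}\bigl(\A|_{T_{f(m)}N}\bigr)\bigl(T_g\varphi_f(X)(m)\bigr)$, i.e.\ the fibre derivative of $\A$ at the \emph{nonzero} points $\varphi_f(g)(m)$, and to read the result in a chart of $C^\infty(M,TN)$ one must further compose with $(\pi_{T^2N},T\A\circ\tau)^{-1}$, which brings in the canonical flip and the vertical lift. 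So in the two charts $\Phi_{M,N}$ is not ``base point times the fixed automorphism $\mathcal{A}_f$''; it is precisely the composite that the paper computes in \eqref{eq: fibrewise}--\eqref{eq: composition}. Proving that this composite and its inverse are smooth is not a matter of ``smooth dependence of $T\A$ on its arguments'': smoothness of maps into spaces of sections requires the pushforward/exponential-law machinery of Theorem \ref{thm: MFDMAP}, and since no inverse function theorem is available in this generality, an inverse must be exhibited explicitly. The paper does all of this by adapting the proof of Michor's Theorem 10.13 almost verbatim (made possible by Lemma \ref{lem: add:T2}), which yields a vector bundle isomorphism with explicit inverse $I_f$ built from the flip and the vertical lift, and then checks that the curve formula defining $\Phi_{M,N}$ coincides with it fibrewise. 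Your proposal neither carries out this computation nor cites a result that does, so the core assertion remains unproved.

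A second, smaller issue is fixable but should be flagged: you justify that $m\mapsto A_m$ is a smooth automorphism of $f^*TN$ by smoothness of $T\A$ composed with the vertical lift, but this only gives smoothness of $A$, not of its fibrewise inverse; in the locally convex setting, fibrewise invertibility of a smooth bundle morphism does not imply that the inverse is smooth, nor that $\sigma\mapsto A^{-1}\circ\sigma$ is continuous on $\Gamma(f^*TN)$. Here the claim can be repaired: since $(\pi_{TN},\A)$ is a diffeomorphism onto an open neighbourhood of the diagonal, $A_m^{-1}$ is the vertical component of $T\bigl((\pi_{TN},\A)^{-1}\bigr)$ at $(f(m),f(m))$ applied to vectors of the form $(0,w)$, so $m\mapsto A_m^{-1}$ is again the restriction of a smooth map. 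The same discipline---exhibiting inverses rather than inferring them---is exactly what the missing chart computation requires.
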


\begin{proof}
 First note that $TN$ is also locally convex and locally metrisable and from
 Lemma \ref{lem: add:T2} we infer that it also admits a local addition. 
 Let $\A \colon TN \supseteq \Omega \rightarrow N$ be the local addition on $N$ 
 and $\tau \colon T^2N \rightarrow T^2N$ be the canonical flip (cf.\ Lemma \ref{lem: add:T2}). 
 Then $T\A \circ \tau$ is a local addition on $TN$. 
 Furthermore, $M$ is compact and thus Theorem \ref{thm: MFDMAP}
 implies that $C^\infty (M,N)$, $TC^\infty (M,N)$ and $C^\infty (M,TN)$
 are locally convex manifolds. 
 We can now argue as in \cite[10.12]{michor1980} to see that the charts
 $(\varphi_{0\circ f})_{f\in C^\infty (M,N)}$ cover $C^\infty (M,TN)$. 
 In fact, the charts $(\varphi_{0\circ f})_{f\in C^\infty (M,N)}$ 
 are bundle trivialisations for $(\pi_{TN})_* \colon C^\infty (M,TN) \rightarrow C^\infty (M,N)$ (see \cite[10.12 2. Claim]{michor1980}). 
 The map $\Phi_{M,N}$ will be an isomorphism of vector bundles if we can show that
 it coincides fibre-wise with the isomorphism of vector bundles constructed in the proof of \cite[Theorem 10.13]{michor1980}.
 Note that the proof of \cite[Theorem 10.13]{michor1980} deals only with the case of a finite-dimensional target $N$. 
 However, the local addition constructed in Lemma \ref{lem: add:T2} allows us
 to copy the proof of \cite[Theorem 10.13]{michor1980} almost verbatim\footnote{The changes needed are restrictions of some mappings to open subsets since contrary to \cite[Theorem 10.13]{michor1980} our local additions are not defined on the whole tangent bundle.}.
 To prove that $\Phi_{M,N}$ is indeed of the claimed form, fix $f \in C^\infty (M,N)$. 
 We will evaluate $\varphi_{0\circ f} \circ \Phi_{M,N}$ on the equivalence class $[t\mapsto c(t)]$ of a smooth curve $c \colon ]-\varepsilon, \varepsilon[ \rightarrow C^\infty (M,N)$ with $c(0)=f$:
  \begin{equation} \label{eq: fibrewise} \begin{aligned}
   \varphi_{0\circ f} \circ \Phi_{M,N} ([t\mapsto c(t)]) &= \varphi_{0\circ f} (m\mapsto [t\mapsto c^\wedge (t,m)])\\
							 &= \left(m \mapsto (\pi_{T^2N}, T\A \circ \tau)^{-1} (0\circ f (m), [t\mapsto c^\wedge(t,m)])\right)
   \end{aligned}
  \end{equation}
 By construction we obtain an element in $\Gamma ((0\circ f)^*T^2N) = \Gamma ((0\circ f)^* T^2N|N)$ where $T^2N|N$ is the restriction of the bundle $T^2N$ to the zero-section of $TN$.
 Consider the vertical lift $V_{TN} \colon TN \oplus TN \rightarrow V(TN)$ given locally by $V((x,a),(x,b)) \coloneq (x,a,0,b)$.
 Recall that $\tau$ and $V_{TN}$ are vector bundle isomorphisms.
 Now we argue as in \cite[10.12]{michor1980} to obtain a canonical isomorphism 
  \begin{displaymath}
   I_f \coloneq (f^* (V_{TN})^{-1} \circ f^* \tau)_* \colon \Gamma ((0\circ f)^* T^2N|N) \rightarrow \Gamma (f^*TN) \oplus \Gamma (f^*TN). 
  \end{displaymath}
 (Notice that there is some abuse in notation for $f^*\tau$, explained in detail in \cite[10.12]{michor1980}).
 We will now prove that $I_f$ is the inverse of $\varphi_{0\circ f} \circ \Phi_{M,N} \circ T\varphi_f^{-1}$.
 A computation in canonical coordinates for $T^2N$ yields 
 \begin{equation}\label{eq: local:ident}\begin{aligned}
  T\varphi_f ([t \mapsto c(t)])] &= (m \mapsto [t \mapsto (\pi_{TN}, \A)^{-1} (f (m), c^\wedge (t,m))])) \\
  &= (m \mapsto V_{TN}^{-1} \circ T(\pi_{TN} , \A)^{-1} (0\circ f,[t\mapsto c^\wedge (t,\cdot)])) \in   \Gamma (f^*TN) \oplus  \Gamma (f^*TN) .
  \end{aligned}
 \end{equation}
 Here we have used the identifications $C_f^\infty (M,TN\oplus TN) \cong \Gamma (f^*(TN\oplus TN)) \cong \Gamma (f^*TN) \oplus  \Gamma (f^*TN)$. 
 Since $\tau$ is an involution on $T^2N$ we can compute as follows 
  \begin{equation}\label{eq: composition} \begin{aligned}
   I_f \circ \varphi_{0\circ f} \circ \Phi_{M,N} ([t\mapsto c(t)]) &\stackrel{\eqref{eq: fibrewise}}{=} \left(m \mapsto V_{TN}^{-1} \circ \tau \circ (\pi_{T^2N}, T\A \circ \tau)^{-1} (0\circ f (m), [t\mapsto c^\wedge(t,m)])\right)\\
								   &\stackrel{\hphantom{\eqref{eq: fibrewise}}}{=} \left(m \mapsto V_{TN}^{-1} \circ (\pi_{T^2N} \circ \tau, T\A \circ \tau \circ \tau)^{-1} (0\circ f (m), [t\mapsto c^\wedge(t,m)])\right)  \\
								   &\stackrel{\hphantom{\eqref{eq: fibrewise}}}{=} (m \mapsto V_{TN}^{-1} \circ T(\pi_{TN} , \A)^{-1} (0\circ f,[t\mapsto c^\wedge (t,\cdot)])).
   \end{aligned}
  \end{equation}
Hence the right hand side of \eqref{eq: composition} coincides with the right hand side of \eqref{eq: local:ident}.
Summing up the map $I_f$ is the inverse of $\Phi_{M,N}|_{T_f C^\infty (M,N)}^{C^\infty_{0\circ f} (M,TN)}$. 
We conclude that $\Phi_{M,N}^{-1}$ is the isomorphism of vector bundles described in \cite[Theorem 10.13]{michor1980}. 
The statements concerning the tangent maps of the smooth
 maps discussed in Theorem \ref{thm: MFDMAP}
 \ref{thm:manifold_structure_on_smooth_mapping_h} then follow from
 \cite[Corollary 10.14]{michor1980}.
\end{proof}

\section*{Acknowledgements}

The research on this paper was partially supported by the DFG Research Training
group 1670 \emph{Mathematics inspired by String Theory and Quantum Field
Theory}, the Scientific Network \emph{String Geometry} (DFG project code NI
1458/1-1) and the project \emph{Topology in Norway} (Norwegian Research Council project 213458).

\addcontentsline{toc}{section}{References}

\end{document}